\DeclareSymbolFont{extraup}{U}{zavm}{m}{n}
\DeclareMathSymbol{\varheart}{\mathalpha}{extraup}{86}
\DeclareMathSymbol{\vardiamond}{\mathalpha}{extraup}{87}
\DeclareMathSymbol{\vardiamond}{\mathalpha}{extraup}{87}
\newcommand{\Boxblack}{\blacksquare\,}
\newcommand{\commment}[1]{}
\renewcommand{\phi}{\varphi}
\renewcommand{\emptyset}{\varnothing}
\newcommand\twoheaduparrow{\mathrel{\rotatebox{90}{$\twoheadrightarrow$}}}
\renewcommand{\epsilon}{\varepsilon}
\newcommand{\diamdot}{{\Diamond}\!\!\!\cdot\ }
\newcommand{\diamdotb}{\Diamondblack\!\!\!{\color{white}{\cdot\ }}}
\newcommand{\boxdotb}{\blacksquare\!\!\!{\color{white}{\cdot\ }}}
\newcommand{\nomi}{\mathbf{i}}
\newcommand{\nomj}{\mathbf{j}}
\newcommand{\nomk}{\mathbf{k}}
\newcommand{\cnomm}{\mathbf{m}}
\newcommand{\bigamp}{\mathop{\mbox{\Large \&}}}
\newcommand{\marginnote}[1]{\marginpar{\raggedright\tiny{#1}}}
\theoremstyle{plain}
\newtheorem{thm}{Theorem}
\newtheorem{theorem}{Theorem}[section]
\newtheorem{corollary}[theorem]{Corollary}
\newtheorem{example}[theorem]{Example}
\newtheorem{proposition}[thm]{Proposition}
\newtheorem{lemma}[thm]{Lemma}
\theoremstyle{definition}
\newtheorem{definition}[thm]{Definition}
\newtheorem{remark}[thm]{Remark}
\title{Correspondence and Canonicity Theory of Quasi-Inequalities and $\Pi_2$-Statements in Modal Subordination Algebras}
\author{Zhiguang Zhao}
\date{}
\begin{document}
\maketitle
\begin{abstract}
In the present paper, we study the correspondence and canonicity theory of modal subordination algebras and their dual Stone space with two relations, generalizing correspondence results for subordination algebras in \cite{dR20,dRHaSt20,dRPa21,Sa16}. Due to the fact that the language of modal subordination algebras involves a binary subordination relation, we will find it convenient to use the so-called quasi-inequalities and $\Pi_2$-statements. We use an algorithm to transform (restricted) inductive quasi-inequalities and (restricted) inductive $\Pi_2$-statements to equivalent first-order correspondents on the dual Stone spaces with two relations with respect to arbitrary (resp.\ admissible) valuations.

Keywords: correspondence theory; canonicity; $\Pi_2$-statements; modal subordination algebras
\end{abstract}

\section{Introduction}\label{Sec:Intro}

In the study of the relations among logic, algebra and topology, a key tool is the dualities between  algebras and topological spaces. This line of research started from Stone \cite{St36}, who showed the duality between Boolean algebras and compact Hausdorff zero-dimensional spaces (which are now called Stone spaces). Later on, many other dualities are studied, e.g.\ Priestley duality for distributive lattices and Priestley spaces \cite{Pr70,Pr72}, Esakia duality for Heyting algebras and Esakia spaces \cite{Es74}, J\'onsson-Tarski duality for modal algebras and modal spaces \cite{JoTa51,JoTa52}, and de Vries duality for de Vries algebras and compact Hausdorff spaces \cite{dV62}.

De Vries duality connects compact Hausdorff spaces with de Vries algebras, which are complete Boolean algebras with a binary relation satisfying some additional properties. Subordination algebras \cite{BeBeSoVe17} are Boolean algebras with a binary relation (called the subordination relation) which generalizes de Vries algebras. Subordinations on Boolean algebras are in 1-1 correspondence with quasi-modal operators \cite{Ce01}. It is proved in \cite{BeBeSoVe17} that subordination algebras are dually equivalent to Stone spaces with a closed relation, which are called subordination spaces. Modal compact Hausdorff spaces are studied in \cite{BeBeHa15} as a the compact Hausdorff generalization of modal spaces, whose counterpart of de Vries duality yields modal de Vries algebras. Therefore, it is natural to consider the generalizations of modal de Vries algebras, namely modal subordination algebras, as the background theory of studying modal compact Hausdorff spaces. Indeed, modal subordination algebras are dually equivalent to Stone spaces with two relations, one closed relation corresponding to the subordination relation, and one clopen relation corresponding to the modality.

It is natural to consider the correspondence theory of propositional formulas on subordination spaces with their first-order corresponding conditions on subordination spaces. Indeed, in the literature, there are works of correspondence theory for subordination algebras and subordination spaces. In \cite{dR20,dRHaSt20,dRPa21}, de Rudder et al. studied correspondence theory of subordination algebras in the perspective of quasi-modal operators. In \cite{Sa16}, Santoli studied the topological correspondence theory between conditions on algebras and first-order conditions on the dual subordination spaces, in the language of a binary connective definable from the squigarrow associated with the subordination relation, using the so-called $\forall\exists$-statements \cite{BeBeSaVe19,BeCaGhLa22,BeGhLa20} (which we call $\Pi_2$-statements in the present paper). Balbiani and Kikot \cite{BaKi12} investigated the Sahlqvist theory in the language of region-based modal logics of space, which uses a contact relation. 

Since the language of modal subordination algebras involves a binary subordination relation, and many natural conditions on subordination algebras and modal subordination algebras involves quasi-inequalities and $\Pi_2$-statements, we will we follow the approach of Santoli in the sense of using $\Pi_2$-statements, and partly follow the approach of de Rudder in the sense of using quasi-modal operators which are not closed under the admissible subsets. We use an algorithm to transform (restricted) inductive quasi-inequalities and (restricted) inductive $\Pi_2$-statements to equivalent first-order correspondents on the dual Stone spaces with two relations with respect to arbitrary (resp.\ admissible) valuations.

The paper is organized as follows: Section \ref{Sec:Modal:Subord:Alg} gives preliminaries on modal subordination algebras and Stone space with two relations. In Part I, we study the correspondence and canonicity theory for quasi-inequalities. Section \ref{Sec:Syntax:Semantics} gives the syntax and semantics of the logic language we are considering. Section \ref{Sec:Preliminaries} gives preliminaries on algorithmic correspondence. We give the definition of inductive quasi-inequalities in Section \ref{sec:Sahlqvist}, define a version of the algorithm $\mathsf{ALBA}$ in Section \ref{Sec:ALBA}, show its soundness in Section \ref{Sec:Soundness}, success on inductive quasi-inequalities in Section \ref{Sec:Success} and the canonicity of restricted inductive quasi-inequalities in Section \ref{Sec:Canonicity}. Section \ref{Sec:Examples} gives some examples of the execution of $\mathsf{ALBA}$ on quasi-inequalities. In Part II, we study the correspondence and canonicity theory for $\Pi_2$-statements. Section \ref{Subsec:Syn:Sem:Pi_2} gives the syntax and semantics for $\Pi_2$-statements. Section \ref{Subsec:Inductive:Pi_2} defines inductive $\Pi_2$-statements and restricted inductive $\Pi_2$-statements. Section \ref{SubSec:ALBA:Pi_2} gives a version of the algorithm $\mathsf{ALBA}^{\Pi_2}$ for $\Pi_2$-statements. We state its soundness with respect to arbitrary valuations in Section \ref{Subsec:Soundness:Pi_2}. We prove its success on inductive and restricted inductive $\Pi_2$-statements in Section \ref{Subsec:Success:Pi_2} and canonicity of restricted inductive $\Pi_2$-statements in Section \ref{Subsec:Canonicity:Pi_2}. Section \ref{Subsec:Examples:Pi_2} gives an example of the execution of $\mathsf{ALBA}$ on a $\Pi_2$-statement. We give some comparisons with existing works in Section \ref{Sec:Comparison}.

\section{Modal Subordination Algebras and Stone Space with Two Relations} \label{Sec:Modal:Subord:Alg}

In this section, we give the definitions of modal subordination algebras and special classes of modal subordination algebras, as well as their dual Stone space with two relations. Basically, we generalize upper continuous modal de Vries algebras (abbreviation: UMDVs, see \cite{BeBeHa15}) to modal subordination algebras. The reason why we choose UMDVs is that the diamond there is finitely additive (see Proposition \ref{Prop:finite:additive}), which makes it easier to develop a correspondence theory.

\subsection{Subordination Algebras, de Vries Algebras and Modal de Vries Algebras}\label{Subsec:MSA}

\begin{definition}[Subordination Algebra, Definition 2.1.1 in \cite{Sa16}]
A \emph{subordination algebra} is a pair $(B,\prec)$ where $B$ is a Boolean algebra and $\prec$ is a \emph{subordination}, which is a binary relation on $B$ satisfying the following properties:

\begin{itemize}
\item $0\prec 0$ and $1\prec 1$;
\item $a\prec b$ and $a\prec c$ implies $a\prec b\land c$;
\item $a\prec c$ and $b\prec c$ implies $a\lor b\prec c$; 
\item $a\leq b\prec c\leq d$ implies $a\prec d$.
\end{itemize}

Equivalently, we can describe a subordination $\prec$ on a Boolean algebra $B$ by an operation $\rightsquigarrow:B\times B\to \{0,1\}\subseteq B$\label{page:squigarrow} such that
\begin{itemize}
\item $a\rightsquigarrow b\in\{0,1\}$;
\item $0\rightsquigarrow 0=1\rightsquigarrow 1=1$; 
\item $a\rightsquigarrow b=1$ and $a\rightsquigarrow c=1$ implies $a\rightsquigarrow b\land c=1$;
\item $a\rightsquigarrow c=1$ and $b\rightsquigarrow c=1$ implies $a\lor b\rightsquigarrow c=1$;
\item $b\rightsquigarrow c=1$, $a\leq b$ and $c\leq d$ implies $a\rightsquigarrow d=1$.
\end{itemize}

Given a subordination $\prec$ on $B$, we can obtain an operation $\rightsquigarrow:B\times B\to\{0,1\}\subseteq B$ satysfying the properties above by defining $a\rightsquigarrow b=1$ iff $a\prec b$. Given an operation $\rightsquigarrow:B\times B\to\{0,1\}\subseteq B$ satysfying the properties above, we can obtain a subordination $\prec$ by defining $\prec=\{(a,b)\in B\times B: a\rightsquigarrow b=1\}$. Therefore, we have a 1-1 correspondence between subordinations $\prec$ and operations $\rightsquigarrow$ satisfying the properties above.
\end{definition}

\begin{definition}[Contact Algebra, Definition 2.1.3 in \cite{Sa16}]
A \emph{contact algebra} is a subordination algebra $(B,\prec)$ where $\prec$ satisfies the following two additional properties:
\begin{itemize}
\item $a\prec b$ implies $a\leq b$;
\item $a\prec b$ implies $\neg b\prec\neg a$.
\end{itemize}
\end{definition}

\begin{definition}[Compingent Algebra, Definition 2.1.4 in \cite{Sa16}]
A \emph{compingent algebra} is a contact algebra $(B,\prec)$ where $\prec$ satisfies the following two additional properties:
\begin{itemize}
\item $a\prec b$ implies $\exists c: a\prec c\prec b$;
\item $a\neq 0$ implies $\exists b\neq 0:b\prec a$.
\end{itemize}
\end{definition}

\begin{definition}[de Vries Algebra, Definition 2.2.1 in \cite{Sa16}]
A \emph{de Vries algebra} is a compingent algebra $(B,\prec)$ where $B$ is a complete Boolean algebra.
\end{definition}

\begin{definition}[Modal de Vries Algebra, Definition 4.7 in \cite{BeBeHa15}]
A \emph{modal de Vries algebra} is a tuple $(B,\prec,\Diamond)$ where $(B,\prec)$ is a de Vries algebra and $\Diamond$ is \emph{de Vries additive}, i.e.\ it satisfies the following two properties:
\begin{itemize}
\item $\Diamond 0=0$;
\item for all  $a_1,a_2,b_1,b_2\in B$, $a_1\prec a_2$ and $b_1\prec b_2$ implies that $\Diamond(a_1\lor a_2)\prec \Diamond(b_1\lor b_2)$.
\end{itemize}
\end{definition}

\begin{proposition}[Proposition 4.8 in \cite{BeBeHa15}]
In a modal de Vries algebra $(B,\prec,\Diamond)$, $\Diamond$ is proximity preserving, i.e.\ $a\prec b$ implies $\Diamond a\prec\Diamond b$ for all $a,b\in B$.
\end{proposition}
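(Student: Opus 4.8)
The plan is to read off proximity preservation directly from the de Vries additivity of $\Diamond$, without invoking any of the de Vries-algebra axioms beyond the additivity clause itself; the only ingredient besides that clause is the idempotency of $\lor$ in the Boolean reduct $B$.

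Concretely, assume $a\prec b$. Instantiate the second clause in the definition of de Vries additivity with both of its pairs taken to be $(a,b)$, i.e.\ set $a_1:=a$, $b_1:=b$ and $a_2:=a$, $b_2:=b$; the required hypotheses are then just $a\prec b$ used twice, which holds by assumption. The clause yields $\Diamond(a\lor a)\prec\Diamond(b\lor b)$. Since $a\lor a=a$ and $b\lor b=b$ in $B$, this is exactly $\Diamond a\prec\Diamond b$, which is the desired conclusion. (The role of $\lor$ here is only to let the two ``slots'' of the additivity condition collapse back to a single element on each side.)

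I do not expect any real obstacle: the argument is a one-step specialization of the defining clause, the one thing to be careful about being that the hypothesis $a\prec b$ must be fed into \emph{both} slots of the binary additivity condition so that the joins on the two sides reduce to $a$ and $b$. Note that neither the clause $\Diamond 0=0$, nor completeness of $B$, nor the compingent axioms are needed for this particular statement; the entire content is that the ``two-variable join'' form of additivity specializes to the ``one-variable'' proximity-preservation property.
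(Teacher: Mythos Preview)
Your argument is correct and is precisely the standard one-line specialization used in the cited source: take both pairs equal to $(a,b)$ in the de Vries additivity clause and use idempotency of $\lor$. One small caveat: the paper's statement of the additivity clause contains a typo (the hypotheses should read $a_1\prec b_1$ and $a_2\prec b_2$, matching \cite[Definition~4.7]{BeBeHa15}); your substitution and the claim that the hypotheses reduce to ``$a\prec b$ used twice'' are correct for that intended form, not for the clause as literally printed here.
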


\begin{proposition}[Proposition 4.10 in \cite{BeBeHa15}]
In a de Vries algebra $(B,\prec)$, if $\Diamond$ is finitely additive (i.e.\ $\Diamond(a\lor b)=\Diamond a\lor\Diamond b$ for all $a,b\in B$) and proximity preserving, then it is de Vries additive.
\end{proposition}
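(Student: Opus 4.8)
The plan is to verify the two clauses of de Vries additivity separately. The first, $\Diamond 0 = 0$, I would dispose of at once as the empty-join instance of finite additivity; this reading of ``finitely additive'' is in fact unavoidable, since on the two-element de Vries algebra the constant map $\Diamond \equiv 1$ preserves binary joins and is proximity preserving while violating $\Diamond 0 = 0$. The whole content therefore lies in the second clause, which I would derive by a short calculation inside the subordination algebra, using nothing but the subordination axioms together with the two hypotheses on $\Diamond$.

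Concretely, assuming $a_1 \prec a_2$ and $b_1 \prec b_2$, the target is to show $\Diamond(a_1 \lor b_1) \prec \Diamond(a_2 \lor b_2)$ (grouping the smaller terms together, so that the clause is not vacuous). First I would apply proximity preservation to each hypothesis to get $\Diamond a_1 \prec \Diamond a_2$ and $\Diamond b_1 \prec \Diamond b_2$. Since $\Diamond(a_2 \lor b_2) = \Diamond a_2 \lor \Diamond b_2$ by finite additivity, we have $\Diamond a_2 \le \Diamond(a_2 \lor b_2)$ and $\Diamond b_2 \le \Diamond(a_2 \lor b_2)$, so the monotonicity axiom of a subordination ($x \le y \prec z \le w$ implies $x \prec w$) yields $\Diamond a_1 \prec \Diamond(a_2 \lor b_2)$ and $\Diamond b_1 \prec \Diamond(a_2 \lor b_2)$. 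Then the left-hand join axiom ($x \prec z$ and $y \prec z$ imply $x \lor y \prec z$) gives $\Diamond a_1 \lor \Diamond b_1 \prec \Diamond(a_2 \lor b_2)$, and rewriting the left-hand side by finite additivity as $\Diamond(a_1 \lor b_1)$ completes the proof.

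I do not expect any genuine obstacle: each step is a bare instance of an axiom, and in particular the contact, compingent and completeness conditions are never used, so the statement in fact holds in an arbitrary subordination algebra. The only points to watch are bookkeeping — tracking which subordination axiom (monotonicity under $\le$ versus closure under binary joins in the first argument) is being applied at each stage — and the convention, flagged above, that ``finitely additive'' is understood to include the nullary case $\Diamond 0 = 0$.
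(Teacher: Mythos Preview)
Your argument is correct and is the standard one; the paper does not supply its own proof but simply cites \cite[Proposition 4.10]{BeBeHa15}, so there is nothing further to compare. Your observation about the nullary case is also apt: the paper's parenthetical gloss of ``finitely additive'' mentions only the binary identity, and your two-element counterexample shows that normality must indeed be read into the hypothesis for the conclusion $\Diamond 0 = 0$ to follow.
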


Notice that in a modal de Vries algebra $(B,\prec,\Diamond)$, the de Vries additive operation $\Diamond$ is not necessarily order-preserving or finitely additive. However, in upper continuous modal de Vries algebras, $\Diamond$ is finitely additive.

\begin{definition}[Upper continuity, Definition 4.14 in \cite{BeBeHa15}]
A modal de Vries algebra $(B,\prec,\Diamond)$ is \emph{upper continuous}, if it satiesfies the following property:
$$\Diamond a=\bigwedge\{\Diamond b:a\prec b\}\mbox{ for any }a,b\in B.$$
\end{definition}

\begin{proposition}[Proposition 4.15 in \cite{BeBeHa15}]\label{Prop:finite:additive}
In an upper continuous modal de Vries algebra $(B,\prec,\Diamond)$, $\Diamond$ is both order-preserving and finitely additive.
\end{proposition}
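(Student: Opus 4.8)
The plan is to derive the two conclusions — order-preservation and finite additivity of $\Diamond$ — from the upper continuity identity $\Diamond a=\bigwedge\{\Diamond b:a\prec b\}$ together with the de Vries additivity of $\Diamond$ and the interpolation property of $\prec$ (recall a de Vries algebra is in particular a compingent algebra, so $a\prec b$ implies $a\prec c\prec b$ for some $c$, and $\prec$ refines $\leq$ on both sides).

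\emph{Order-preservation.} Suppose $a_1\leq a_2$. I would show $\{\Diamond b: a_2\prec b\}\subseteq\{\Diamond b: a_1\prec b\}$: indeed, if $a_2\prec b$ then from $a_1\leq a_2\prec b\leq b$ and the monotonicity axiom of a subordination we get $a_1\prec b$. Taking meets over the (now larger) left-hand set can only make the meet smaller, so $\Diamond a_1=\bigwedge\{\Diamond b:a_1\prec b\}\leq\bigwedge\{\Diamond b:a_2\prec b\}=\Diamond a_2$, using upper continuity at both ends. Here I am using that the meets exist because $B$ is complete.

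\emph{Finite additivity.} One inequality, $\Diamond a\lor\Diamond b\leq\Diamond(a\lor b)$, is immediate from order-preservation since $a,b\leq a\lor b$. For the reverse inequality $\Diamond(a\lor b)\leq\Diamond a\lor\Diamond b$, I would use upper continuity on the right: it suffices to find, for the purpose of bounding $\Diamond(a\lor b)=\bigwedge\{\Diamond c:a\lor b\prec c\}$, enough elements $c$ with $a\lor b\prec c$ and $\Diamond c$ small. Given any $a\prec a'$ and $b\prec b'$, the subordination axiom ($x\prec z$ and $y\prec z$ — after joining up — actually: $a\prec a'\leq a'\lor b'$ and $b\prec b'\leq a'\lor b'$ give $a\prec a'\lor b'$ and $b\prec a'\lor b'$, hence $a\lor b\prec a'\lor b'$). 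Therefore $\Diamond(a\lor b)\leq\Diamond(a'\lor b')$. Now I want to compare $\Diamond(a'\lor b')$ with $\Diamond a'\lor\Diamond b'$; this is exactly where de Vries additivity enters — taking $a_1=a_2=a'$ wait, more carefully: de Vries additivity says $a_1\prec a_2$ and $b_1\prec b_2$ implies $\Diamond(a_1\lor a_2)\prec\Diamond(b_1\lor b_2)$, and I should instead aim first to recover ordinary additivity of $\Diamond$ on "nice" arguments. The cleanest route: by Proposition 4.8 (proximity preservation) $a\prec a'$ gives $\Diamond a\prec\Diamond a'$, so $\Diamond a\leq\Diamond a'$; combined with upper continuity $\Diamond a=\bigwedge_{a\prec a'}\Diamond a'$. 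Then use interpolation to split each $a\prec a'$ as $a\prec a''\prec a'$ and run a directedness/cofinality argument so that $\bigwedge_{a\prec a'}\bigwedge_{b\prec b'}(\Diamond a'\lor\Diamond b')=(\bigwedge_{a\prec a'}\Diamond a')\lor(\bigwedge_{b\prec b'}\Diamond b')=\Diamond a\lor\Diamond b$, the first equality being the infinite distributive law available in the complete Boolean algebra $B$ once the index sets are seen to be down-directed. Finally, $\Diamond(a'\lor b')\leq\Diamond a'\lor\Diamond b'$ for each such pair — this last inclusion is what I must still justify, presumably by writing $a'\lor b'\prec$ (something) and using de Vries additivity $\Diamond(\text{join})\prec\Diamond(\text{join})$ plus upper continuity again.

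\emph{Main obstacle.} The delicate step is establishing $\Diamond(a'\lor b')\leq\Diamond a'\lor\Diamond b'$ for $\prec$-enlargements — i.e.\ converting the "de Vries additive" hypothesis (which only constrains $\Diamond$ of joins $a_1\lor a_2$ when $a_1\prec a_2$, and only up to $\prec$) into an honest inequality about $\Diamond$ of an arbitrary join — and then interchanging the two nested infima with the binary join using infinite distributivity in $B$, which requires checking the relevant families are down-directed (this is where interpolation $a\prec c\prec b$ is essential). I expect the bookkeeping of these cofinality/directedness arguments to be the only real content; the monotonicity half is routine.
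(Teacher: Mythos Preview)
The paper does not prove this proposition --- it merely cites \cite[Proposition~4.15]{BeBeHa15} --- so there is no in-paper proof to compare against. Your order-preservation argument is correct. The finite-additivity half, however, has a genuine gap which you flag but do not close. After the (sound) reductions you need
\[
\Diamond(a\lor b)\ \leq\ \Diamond a' \lor \Diamond b' \qquad\text{whenever } a\prec a',\ b\prec b',
\]
and you propose to reach it via $\Diamond(a\lor b)\leq \Diamond(a'\lor b')$ followed by $\Diamond(a'\lor b')\leq \Diamond a'\lor\Diamond b'$. That second step is exactly the sub-additivity being proved, with $a',b'$ in place of $a,b$: the argument is circular.

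Your suggested repair --- ``write $a'\lor b'\prec\text{(something)}$ and use de Vries additivity plus upper continuity again'' --- cannot work as stated: de Vries additivity (whose statement in this paper contains a typo; it should read $\Diamond(a_1\lor b_1)\prec\Diamond(a_2\lor b_2)$) only ever bounds a $\Diamond(\text{join})$ above by another $\Diamond(\text{larger join})$, never by a \emph{join of separate $\Diamond$-values}, which is what you need. Interpolating $a\prec a_1\prec a'$ and iterating just reproduces the same missing inequality one level down; the loop never closes. The distributivity and directedness bookkeeping you outline is fine, but it only reduces the problem to this one inequality, and that inequality is the substantive content of the proposition --- it requires a different idea than the one you sketch.
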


\subsection{Modal Subordination Algebras, Stone Spaces with Two Relations and their Object-Level Duality}

In this subsection we define the modal subordination algebras, and their dual Stone spaces with two relations, and give their object-level duality. The reason why we choose to make modal subordination algebras as subordination algebras with normal and finitely additive operations is that it is easier to develop the correspondence theory for normal and finitely additive operations.
 
\begin{definition}[Modal Subordination Algebra]
A \emph{modal subordination algebra} is a tuple $(B,\prec,\Diamond)$ where $(B,\prec)$ is a subordination algebra, $\Diamond$ satisfies the following conditions: for all $a,b\in B$,
\begin{itemize}
\item $\Diamond$ is normal, namely $\Diamond 0=0$;
\item $\Diamond$ is finitely additive, namely $\Diamond(a\lor b)=\Diamond a\lor\Diamond b$.\end{itemize}
\end{definition}

\begin{definition}
A modal subordination algebra is 
\begin{itemize}
\item a \emph{modal contact algebra}, if $(B,\prec)$ is a contact algebra; 
\item a \emph{modal compingent algebra}, if $(B,\prec)$ is a compingent algebra;
\item \emph{proximity preserving}, if for all $a,b\in B$, $a\prec b$ implies that $\Diamond a\prec\Diamond b$;
\item an \emph{additive modal de Vries algebra}, if $(B,\prec)$ is a de Vries algebra and $\Diamond$ is proximity preserving;
\item an \emph{upper continuous modal de Vries algebra}, if $(B,\prec,\Diamond)$ is an additive modal de Vries algebra and $\Diamond$ is upper continuous. 
\end{itemize}
\end{definition}

In what follows we will define the dual topological structure of a modal subordination algebra. Since there is the duality between subordination algebras and Stone spaces with closed relations (see \cite[Section 2.1.1]{Sa16}), and there is the duality between modal algebras and modal spaces (see \cite[Chapter 5]{BRV01}), we can put the two together to obtain the duality between modal subordination algebras and Stone spaces with \emph{two relations}.

\begin{definition}[Definition 2.1.9 in \cite{Sa16}]
Take any modal subordination algebra $(B,\prec,\Diamond)$ and any $S\subseteq B$. We define $\twoheaduparrow S$ to be the upset of $S$ with respect to the relation $\prec$, i.e.\ 
$$\twoheaduparrow S:=\{b\in B:\exists s\in S\mbox{ such that }s\prec b\}.$$
\end{definition}

\begin{definition}[Stone Spaces with two relations]\label{Def:StRR'}
A \emph{Stone space with two relations} $(X,\tau,R,R')$ is defined as follows:
\begin{itemize}
\item $(X,\tau)$ is a Stone space;
\item $R$ is a closed relation on $X$, i.e.\ for each closed subset $F$ of $X$, both $R[F]$ and $R^{-1}[F]$ are closed;
\item $R'[x]$ is closed for all $x\in X$ and $R'^{-1}[U]$ is clopen for all clopen $U\subseteq X$.
\end{itemize}
\end{definition}
Given a modal subordination algebra $(B,\prec,\Diamond)$, its dual Stone space with two relations $(X,\tau,R,R')$ is defined as follows:

\begin{itemize}
\item $(X,\tau)$ is the dual Stone space of $B$;
\item $R$ is such that $xRy$ iff $\twoheaduparrow x\subseteq y$ (It is easy to check that $R$ is a closed relation on $X$);
\item $R'$ is such that $xR'y$ iff for all $a\in y$, $\Diamond a\in x$ (It is easy to check that $R'[x]$ is closed for all $x\in X$ and $R'^{-1}[U]$ is clopen for all clopen $U\subseteq X$).
\end{itemize}

Given a Stone space with two relations $(X,\tau,R,R')$, its dual modal subordination algebra $(B,\prec,\Diamond)$ is defined as follows:

\begin{itemize}
\item $B$ is the dual Boolean algebra of the Stone space $(X,\tau)$, i.e.\ $B$ consists of the clopen subsets of $X$;
\item $U\prec V$ iff $R[U]\subseteq V$;
\item $\Diamond U=R'^{-1}[U]$.
\end{itemize}

\subsection{Canonical Extensions}\label{Subsec:Canonical:Extensions}

In this subsection, we define the canonical extensions of modal subordination algebras, as well as give the semantic environment of the correspondence and canonicity theory for modal subordination algebras.

\subsubsection{Canonical Extensions of Boolean Algebras}\label{SubSubSec:CE:BA}

\begin{definition}[Canonical Extension of Boolean Algebras, cf.\ Chapter 6, Definition 104 in \cite{BeBlWo06}]\label{Def:CE:BA}
The \emph{canonical extension} of a Boolean algebra $B$ is a complete Boolean algebra $B^\delta$ containing $B$ as a sub-Boolean algebra such that the following two conditions hold:
\begin{itemize}
\item[](\emph{denseness}) each element of $B^\delta$ can be represented both as a join of meets and as a meet of joins of elements from $B$;
\item[](\emph{compactness}) for all $X,Y \subseteq B$ with $\bigwedge X \leq \bigvee Y$ in $B^\delta$, there are finite subsets $X_0 \subseteq X$ and $Y_0\subseteq Y$ such that $\bigwedge X_0 \leq \bigvee Y_0.$\footnote{In fact, this is an equivalent formulation of the definition in \cite{BeBlWo06}.}
\end{itemize}
\end{definition}
An element $x\in B^\delta$ is \emph{open} (resp.\ \emph{closed}) if it is the join (resp.\ meet) of some $X\subseteq B$. We use $\mathsf{O}(B^\delta)$ (resp.\ $\mathsf{K}(B^\delta)$) to denote the set of open (resp.\ closed) elements of $B^\delta$. It is easy to see that elements in $B$ are exactly the ones which are both closed and open (i.e.,\ \emph{clopen}).

It is well-known that for any given $B$, its canonical extension is unique up to isomorphism and that assuming the axiom of choice, the canonical extension of a Boolean algebra is a perfect Boolean algebra, i.e.,\ a complete and atomic Boolean algebra.

\subsubsection{Canonical Extensions of Maps}\label{SubSubSec:Maps}

Let $A,B$ be Boolean algebras. There are two canonical ways to extend an order-preserving map $f:A\rightarrow B$ to a map $A^{\delta}\to B^{\delta}$:
\begin{definition}[$\sigma$- and $\pi$-extension](\cite[page 375]{BeBlWo06})\label{def:canonical:extension:maps}
For any order-preserving map $f:A\rightarrow B$ and $u\in A^\delta$, we define
\[f^\sigma(u)=\bigvee\{\bigwedge\{f(a):x\leq a\in A\}:u\geq x\in\mathsf{K}(A^\delta)\}\]
\[f^\pi(u)=\bigwedge\{\bigvee\{f(a):y\geq a\in A\}:u \leq y\in\mathsf{O}(A^\delta)\}.\]
\end{definition}

\subsubsection{Canonical Extensions of Modal Subordination Algebras}\label{SubSubSec:MSA}

Since in a modal subordination algebra $(B,\prec,\Diamond)$, $\Diamond$ is normal and finitely additive, it is \emph{smooth}, i.e.,\ $\Diamond^{\sigma}=\Diamond^{\pi}$ (cf.\ \cite[Proposition 111(3)]{BeBlWo06}). Therefore, in the canonical extension we can take either of the two extensions.

For the $\prec$ relation, we define its canonical extension by defining the $\pi$-extension $\rightsquigarrow_{\prec}^{\pi}:B^{\delta}\times B^{\delta}\to B^{\delta}$ of the assocciated strict implication $\rightsquigarrow_{\prec}$ and then take the associated subordination $\prec_{\rightsquigarrow^{\pi}_{\prec}}$ (which we also denote $\prec^{\pi}$) associated with $\rightsquigarrow^{\pi}_{\prec}$. 

\begin{proposition}[Folklore.]
\begin{itemize}
\item $\Diamond^{\sigma}$ is completely join-preserving, i.e.\ it preserves arbitrary (including empty) joins.
\item $\rightsquigarrow^{\pi}_{\prec}$ is completely join-reversing in the first coordinate and completely meet-preserving in the second coordinate, i.e.\ 
\begin{itemize}
\item $\bigvee\{a_i:i\in I\}\rightsquigarrow^{\pi}_{\prec}b=\bigwedge\{a_i\rightsquigarrow^{\pi}_{\prec}b:i\in I\}$;
\item $a\rightsquigarrow^{\pi}_{\prec}\bigwedge\{b_i:i\in I\}=\bigwedge\{a\rightsquigarrow^{\pi}_{\prec}b_i:i\in I\}$.
\end{itemize}
\end{itemize}
\end{proposition}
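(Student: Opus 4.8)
The plan is to prove the two clauses separately, reducing each to standard facts about canonical extensions of normal operators plus the compactness and denseness properties of $B^\delta$. For the first clause, I would recall that $\Diamond:B\to B$ is order-preserving, normal and finitely additive, hence join-preserving on $B$; the general theory of canonical extensions of such maps (e.g.\ \cite[Proposition 111]{BeBlWo06}) gives that $\Diamond^\sigma$ is completely join-preserving and that $\Diamond^\sigma=\Diamond^\pi$ (smoothness), which the excerpt has already invoked. Since this is flagged as folklore, I would give a short direct argument rather than citing a black box: first show $\Diamond^\sigma$ restricted to closed elements equals $\bigwedge\{\Diamond a : x\le a\in B\}$ and is order-preserving; then, for an arbitrary family $\{u_i\}\subseteq B^\delta$, use denseness to write each $u_i$ as a join of closed elements and reduce the claim $\Diamond^\sigma(\bigvee_i u_i)=\bigvee_i\Diamond^\sigma(u_i)$ to the case of closed elements; for closed $x$ with $x\le\bigvee_j x_j$ ($x_j$ closed), compactness lets one interpolate clopens and finite additivity of $\Diamond$ on $B$ does the rest. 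The empty-join case is exactly normality $\Diamond^\sigma 0=\Diamond 0=0$.

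For the second clause, the key observation is that $\rightsquigarrow_\prec:B\times B\to B$ is order-reversing in its first argument and order-preserving in its second, and moreover that it turns finite joins in the first coordinate into meets and is meet-preserving in the second coordinate: from the subordination axioms, $a\rightsquigarrow_\prec c=1$ and $b\rightsquigarrow_\prec c=1$ iff $a\lor b\rightsquigarrow_\prec c=1$ (the clause ``$a\prec c$ and $b\prec c$ implies $a\lor b\prec c$'' together with monotonicity), and dually $a\rightsquigarrow_\prec b=1$ and $a\rightsquigarrow_\prec c=1$ iff $a\rightsquigarrow_\prec b\land c=1$. So on $B$ the map $\rightsquigarrow_\prec$ is a (dual) operator in each coordinate. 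I would then appeal to the standard fact that the $\pi$-extension of a map that is meet-preserving (resp.\ converts joins to meets) in each coordinate is completely meet-preserving (resp.\ completely join-reversing) in that coordinate, again giving a direct proof: the $\pi$-extension of a coordinatewise order-preserving/reversing map is computed by $u\rightsquigarrow^\pi_\prec v=\bigwedge\{a\rightsquigarrow_\prec b : u\le a\in\mathsf O(B^\delta)\cap B\text{-approx},\ b\ge v\in\ldots\}$ — more precisely using open approximants from above in the appropriate variance — and one checks the complete (co)continuity by the same denseness-plus-compactness routine: reduce to open elements, then to clopens, then use the finitary preservation on $B$.

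Concretely, for the first coordinate I would fix $v\in B^\delta$ and a family $\{a_i\}_{i\in I}$ and show $(\bigvee_i a_i)\rightsquigarrow^\pi_\prec v=\bigwedge_i(a_i\rightsquigarrow^\pi_\prec v)$. The inequality $\le$ is monotonicity (order-reversal); for $\ge$, unfold the $\pi$-extension, use that every element is a meet of opens and every open a join of... — careful here — a \emph{filtered} meet of clopens, and that $\bigvee_i a_i$ is approximated from above by clopens which, by compactness applied to the $a_i$, dominate finite subjoins; finite join-to-meet conversion on $B$ then closes the gap. The second coordinate is the dual/easier case since $\rightsquigarrow_\prec$ is already meet-preserving there. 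The main obstacle I anticipate is bookkeeping the variances correctly: $\rightsquigarrow_\prec$ is order-reversing in the first coordinate, so the relevant approximants and the direction of the $\pi$-extension formula in that variable must be handled via the order-dual, and it is easy to slip a join for a meet. I would therefore isolate a small lemma: for an order-reversing map $g:A\to B$, $g^\pi$ agrees with $(g:A^\partial\to B)^\sigma$ suitably interpreted, transporting the first-coordinate claim to the already-established first clause; this keeps the argument clean and avoids recomputing the continuity properties from scratch.
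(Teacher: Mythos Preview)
The paper gives no proof of this proposition: it is labelled ``Folklore'' and left unproved, so there is nothing to compare your proposal against. Your sketch is correct and follows the standard route one would expect for such a folklore statement --- reduce to the finitary preservation properties on $B$ (which follow directly from the subordination axioms and the $\{0,1\}$-valuedness of $\rightsquigarrow_\prec$), then invoke the general fact that $\sigma$-extensions of finitely join-preserving maps are completely join-preserving and $\pi$-extensions of finitely meet-preserving (resp.\ join-reversing) maps are completely meet-preserving (resp.\ join-reversing). Your anticipated obstacle is the only real one: the paper's Definition~\ref{def:canonical:extension:maps} of $f^\pi$ is stated for order-preserving maps, so for the first coordinate of $\rightsquigarrow_\prec$ you do need to pass through the order-dual (i.e.\ regard $\rightsquigarrow_\prec:B^\partial\times B\to B$ as order-preserving and extend); your proposed lemma handling this is exactly the right fix.
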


Indeed, there is another way of obtaining the canonical extension of the modal subordination algebras, namely first take its dual space $(X,\tau,R,R')$, and then drop off the topological structure to obtain a \emph{birelational Kripke frame} $(X,R,R')$, and then define the canonical extension $(B^{\circ},\prec^{\circ},\Diamond^{\circ})$ as follows:

\begin{itemize}
\item $B^{\circ}$ is the power set Boolean algebra of $X$, i.e.\ $B^{\circ}$ consists of all subsets of $X$;
\item $U\prec^{\circ} V$ iff $R[U]\subseteq V$;
\item $\Diamond^{\circ} U=R'^{-1}[U]$.
\end{itemize}

\begin{proposition}(Folklore.)
The two definitions are equivalent, i.e.\ $(B^{\circ},\prec^{\circ},\Diamond^{\circ})=(B^{\delta},\prec^{\pi},\Diamond^{\pi})$.
\end{proposition}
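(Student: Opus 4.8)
The plan is to realise $B^\delta$ concretely as the powerset of the Stone dual of $B$ and then check that the resulting Boolean isomorphism transports the abstractly defined operations $\Diamond^\pi$ and $\prec^\pi$ to the concretely defined $\Diamond^\circ$ and $\prec^\circ$; throughout I use that $\Diamond^\sigma=\Diamond^\pi$ by smoothness. First I would set $X$ to be the set of ultrafilters of $B$ --- the underlying set of its dual Stone space --- and let $\eta\colon B^\delta\to\mathcal P(X)$ be the complete Boolean isomorphism extending the Stone embedding $a\mapsto\hat a=\{x\in X:a\in x\}$. It is classical (e.g.\ \cite[Ch.~6]{BeBlWo06}, \cite[Ch.~5]{BRV01}) that $\eta$ identifies $\mathsf K(B^\delta)$ with the closed subsets of $X$ and $\mathsf O(B^\delta)$ with the open subsets, a closed element $p$ corresponding to $\bigcap\{\hat a:p\le a\in B\}$. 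Since $B^\circ=\mathcal P(X)$ by definition, it then remains to verify $\eta(\Diamond^\sigma u)=\Diamond^\circ\eta(u)$ and $\bigl(u\prec^\pi v\iff\eta(u)\prec^\circ\eta(v)\bigr)$ for all $u,v\in B^\delta$.

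For the modality I would run the standard Jónsson--Tarski computation. Both $\Diamond^\sigma$ and $\Diamond^\circ={R'}^{-1}[-]$ are completely join-preserving --- the former by the Folklore Proposition above stating that $\Diamond^\sigma$ is completely join-preserving, the latter because relational preimage distributes over arbitrary unions --- so by denseness it suffices to compare them on closed elements. For closed $p$, order-preservation of $\Diamond$ gives $\Diamond^\sigma p=\bigwedge\{\Diamond a:p\le a\in B\}$, which $\eta$ sends to $\bigcap\{\widehat{\Diamond a}:F\subseteq\hat a\}$ with $F=\eta(p)$. Here $\widehat{\Diamond a}={R'}^{-1}[\hat a]$, the nontrivial inclusion requiring the usual ultrafilter argument: whenever $\Diamond a\in x$, the filter $\{b:\neg\Diamond\neg b\in x\}$ --- which is proper once $\Diamond a\in x$, and closed under binary meets since $\Diamond$ is finitely additive --- together with $a$ has the finite intersection property, so extends to an ultrafilter witnessing $x\in{R'}^{-1}[\hat a]$. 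Finally $\bigcap\{{R'}^{-1}[\hat a]:F\subseteq\hat a\}={R'}^{-1}[F]$ because, if $x\notin{R'}^{-1}[F]$, the disjoint closed subsets $R'[x]$ and $F$ of the compact space $X$ are separated by a clopen $\hat a\supseteq F$; this yields $\eta(\Diamond^\sigma p)={R'}^{-1}[F]$.

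For the subordination, recall $u\prec^\pi v$ abbreviates $u\rightsquigarrow^\pi_\prec v=1$. Since $\rightsquigarrow^\pi_\prec$ is completely join-reversing in its first argument and completely meet-preserving in its second (same Folklore Proposition), writing $u$ as a join of closed elements and $v$ as a meet of open elements reduces $u\prec^\pi v$ to: $p\prec^\pi o$ for every closed $p\le u$ and open $o\ge v$; and $R[-]$ distributing over unions gives the matching reduction of $\eta(u)\prec^\circ\eta(v)$. So everything comes down to $p\prec^\pi o\iff R[\eta(p)]\subseteq\eta(o)$ for $p$ closed, $o$ open. I would prove this by unwinding the $\pi$-extension of the two-variable $\rightsquigarrow_\prec$ --- using that canonical extension commutes with finite products, so the (order-reversing) first coordinate lives in $(B^\delta)^\partial$ --- and observing that on a closed first argument and an open second argument the formula collapses to the condition ``there exist $a_1,a_2\in B$ with $p\le a_1$, $a_2\le o$ and $a_1\prec a_2$''. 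That this last condition is equivalent to $R[\eta(p)]\subseteq\eta(o)$ is then a compactness argument layered on the object-level duality $a\prec b\iff R[\hat a]\subseteq\hat b$ from Section~\ref{Sec:Modal:Subord:Alg}: ``$\Rightarrow$'' is just monotonicity of $R[-]$, while for ``$\Leftarrow$'' one shrinks the closed set $R[\eta(p)]$ to a clopen $\widehat{a_2}$ inside the open set $\eta(o)$, then pulls $\eta(p)$ to a clopen $\widehat{a_1}$ inside the open set $X\setminus R^{-1}[X\setminus\widehat{a_2}]$, using at both steps the compactness of $X$ and the fact that $R$ is a closed relation. Assembling the two clauses shows $\eta$ is an isomorphism $(B^\delta,\prec^\pi,\Diamond^\pi)\cong(B^\circ,\prec^\circ,\Diamond^\circ)$.

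The Boolean identification and the modal clause are entirely routine, being pure Stone and Jónsson--Tarski duality. The part I expect to be the genuine obstacle is the subordination clause: correctly tracking the variance of the binary strict implication $\rightsquigarrow_\prec$ through its $\pi$-extension --- in particular that, since the first coordinate is order-reversing, one must pass to $(B^\delta)^\partial$, where the roles of ``closed'' and ``open'' are exchanged --- and then organising the resulting two-sided meet of joins so that its collapse to a closed first argument and a clopen second argument lines up with the closed relation $R$ via compactness. This is precisely the step where the general canonical-extension machinery of \cite{BeBlWo06} has to be specialised to the subordination setting, in the spirit of \cite[Section~2.1.1]{Sa16}.
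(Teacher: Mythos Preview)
The paper does not actually prove this proposition: it is stated as ``Folklore'' and left without proof, so there is no approach in the paper to compare yours against. Your proposal supplies exactly the kind of argument one would expect to underlie that folklore label, and it is correct: the Boolean and modal clauses are standard Stone/J\'onsson--Tarski duality, and your treatment of the subordination clause --- reducing via the complete join-reversal/meet-preservation of $\rightsquigarrow^\pi_\prec$ to the case of a closed first argument and an open second argument, computing the $\pi$-extension there as a join over clopen interpolants, and then matching this with $R[\eta(p)]\subseteq\eta(o)$ by a two-step compactness argument using that $R$ is a closed relation --- is the right way to do it and goes through as you describe.

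One small remark: in your reduction of $\eta(u)\prec^\circ\eta(v)$ you only need that $R[-]$ distributes over unions and that a set is contained in an intersection iff it is contained in each intersectand; you phrase this correctly but it is worth noting that no topological input is needed at that step --- the topology (closedness of $R$) enters only at the final interpolation step, exactly where you invoke it.
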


Therefore, the following diagram describes the relation between modal subordination algebras and their canonical extensions, as well as their dual Stone space with two relations, and the birelational Kripke frames obtained by dropping the topology:
\begin{center}
\begin{tikzpicture}[node/.style={circle, draw, fill=black}, scale=1]\label{atable:canonical:extension}
\node (BFD) at (-1.5,-1.5) {$(B,\prec,\Diamond)$};
\node (BRO) at (-1.5,1.5) {$(B^{\delta}, \prec^{\pi}, \Diamond^{\pi})$};
\node (FD) at (1.5,-1.5) {$(X,\tau,R,R')$};
\node (RO) at (1.5,1.5) {$(X,R,R')$};
\draw [right hook->] (BFD) to node[left]{$(\cdot)^{\delta}$} (BRO);
\draw [->] (FD) to node[right]{$U$} (RO);
\draw [<->] (BFD) to node[above] {$\cong^{\partial}$} (FD);
\draw [<->] (BRO) to node[above] {$\cong^{\partial}$} (RO);
\end{tikzpicture}
\end{center}

Here $\cong^{\partial}$ means dual equivalence, $U$ is the forgetful functor dropping the topology and replacing the clopen set Boolean algebra by the powerset Boolean algebra, and $(\cdot)^{\delta}$ is taking the canonical extension.

\section*{Part I: Correspondence and Canonicity for Quasi-Inequalities}

\section{Syntax and Semantics}\label{Sec:Syntax:Semantics}

In this section, we introduce the syntax and semantics of the language of modal subordination algebras.

\subsection{Language and Syntax}\label{Subsec:Lan:Syn}

\begin{definition}
Given a countably infinite set $\mathsf{Prop}$ of propositional variables, the modal subordination language $\mathcal{L}$ is defined as follows:
$$\varphi::=p \mid \bot \mid \top \mid \neg\varphi \mid (\varphi\land\varphi) \mid (\varphi\lor\varphi) \mid (\varphi\to\varphi) \mid \Box\varphi \mid \Diamond\varphi \mid {\diamdot}\phi\mid{\boxdot}\phi,$$
where $p\in \mathsf{Prop}$. We will follow the standard rules for omission of the parentheses. We also use $\mathsf{Prop}(\phi)$ to denote the propositional variables occuring in $\phi$. We use the notation $\vec p$ to denote a set of propositional variables and use $\phi(\vec p)$ to indicate that the propositional variables occur in $\phi$ are all in $\vec p$. We call a formula \emph{pure} if it does not contain propositional variables. We use the notation $\overline{\theta}$ to indicate a finite list of formulas. We use the notation $\theta(\eta/p)$ to indicate uniformly substituting $p$ by $\eta$. 
\end{definition}

In the language of formulas, we use $\Diamond$ and $\Box$ as the syntax for the normal and finitely additive operation, and use ${\diamdot}$ and ${\boxdot}$ as the syntax for the subordination on the Boolean algebra. We will also use the following syntactic binary relations $\leq$ and $\prec$ to formalize the ``less than or equal to'' relation and the subordination relation, respectively. Therefore, we use both ${\diamdot}, {\boxdot}$ and $\prec$ to denote the subordination relation.

\begin{definition}
$\ $
\begin{itemize}
\item An \emph{inequality} is of the form $\phi\leq\psi$ or $\phi\prec\psi$, where $\phi$ and $\psi$ are formulas.

\item A \emph{meta-conjunction of inequalities} is of the form $\phi_1\triangleleft_1\psi_1\ \&\ \ldots \ \&\ \phi_n\triangleleft_n\psi_n$, where $n\geq 1$, $\triangleleft_i\in \{\leq,\prec\}$. Typically a meta-conjunction of inequalities is abbreviated as $\overline{\phi}\triangleleft\overline{\psi}$, and if all $\triangleleft$s are $\leq$ (resp.\ $\prec$), then it is written as $\overline{\phi}\leq\overline{\psi}$ (resp.\ $\overline{\phi}\prec\overline{\psi}$).

\item A \emph{quasi-inequality} is of the form $\overline{\phi}\triangleleft\overline{\psi}\  \Rightarrow\ \overline{\gamma}\triangleleft\overline{\delta}$. 

\end{itemize}
\end{definition}

\subsection{Semantics}\label{Subsec:Seman}

We interpret formulas on the dual Stone space with two relations, with two kinds of valuations, namely admissible valuations which interpret propositional variables as clopen subsets of the space (i.e.\ interpret them as elements of the dual modal subordination algebras), and arbitrary valuations which interpret propositional variables as arbitrary subsets of the space (i.e.\ interpret them as elements of the canonical extensions of the dual modal subordination algebras).

\begin{definition}

In a Stone space with two relations $(X,\tau,R,R')$ (we abuse notation to use $X$ to denote the space), we call $X$ the \emph{domain} of the space, and $R$ is the relation corresponding to the subordination as well as ${\diamdot}$ and ${\boxdot}$, and $R'$ is the relation corresponding to the modalities $\Diamond$ and $\Box$.

\begin{itemize}
\item A \emph{pointed Stone space with two relations} is a pair $(X, w)$ where $w\in X$.
\item An \emph{admissible model} is a pair $M=(X,V)$ where $V:\mathsf{Prop}\to Clop(X)$ is an \emph{admissible valuation} on $X$ such that for all propositional variables $p$, $V(p)$ is a clopen subset of $X$. 
\item An \emph{arbitrary model} is a pair $M=(X,V)$ where $V:\mathsf{Prop}\to P(X)$ is an \emph{arbitrary valuation} on $X$ such that for all propositional variables $p$, $V(p)$ is an arbitrary subset of $X$. 
\end{itemize}

Given a valuation $V$, a propositional variable $p\in\mathsf{Prop}$, a subset $A\subseteq X$, we can define $V^{p}_{A}$, the \emph{$p$-variant of $V$} as follows: $V^{p}_{A}(q)=V(q)$ for all $q\neq p$ and $V^{p}_{A}(p)=A$.

Now the satisfaction relation can be defined as follows: given any Stone space with two relations $(X,\tau,R,R')$, any (admissible or arbitrary) valuation $V$ on $X$, any $w\in X$, 

\begin{center}
\begin{tabular}{l c l}
$X,V,w\Vdash p$ & iff & $w\in V(p)$;\\
$X,V,w\Vdash \bot$ & : & never;\\
$X,V,w\Vdash \top$ & : & always;\\
$X,V,w\Vdash \neg\varphi$ & iff & $X,V,w\nVdash\varphi$;\\
$X,V,w\Vdash\varphi\land\psi$ & iff & $X,V,w\Vdash \varphi$ and $X,V,w\Vdash\psi$;\\
$X,V,w\Vdash\varphi\lor\psi$ & iff & $X,V,w\Vdash \varphi$ or $X,V,w\Vdash\psi$;\\
$X,V,w\Vdash\varphi\to\psi$ & iff & $X,V,w\nVdash \varphi$ or $X,V,w\Vdash\psi$;\\
$X,V,w\Vdash \Box\varphi$ & iff & $\forall v(R'wv\ \Rightarrow\ X,V,v\Vdash\varphi)$;\\
$X,V,w\Vdash\Diamond\varphi$ & iff & $\exists v(R'wv\ \mbox{ and }\ X,V,v\Vdash\varphi)$;\\
$X,V,w\Vdash{\boxdot}\varphi$ & iff & $\forall v(Rvw\ \Rightarrow\ X,V,v\Vdash\varphi)$;\\
$X,V,w\Vdash{\diamdot}\varphi$ & iff & $\exists v(Rvw\ \mbox{ and }\ X,V,v\Vdash\varphi)$.\\
\label{page:downarrow}\\
\end{tabular}
\end{center}

Notice that here ${\boxdot}$ and ${\diamdot}$ are interpreted in the reverse direction of $R$. This is because we would like to make $\phi\prec\psi$ (which is interpreted as $R[V(\phi)]\subseteq V(\psi)$) equivalent to ${\diamdot}\phi\leq\psi$, so we need to make ${\diamdot}\phi$ interpreted as $R[V(\phi)]$.

For any formula $\phi$, we let $\llbracket\varphi\rrbracket^{X,V}=\{w\in X\mid X,V,w\Vdash\varphi\}$ denote the \emph{truth set} of $\varphi$ in $(X,V)$ (sometimes we just use $V(\phi)$ when the space is clear from the context). 
\begin{itemize}
\item The formula $\varphi$ is \emph{globally true} on $(X,V)$ (notation: $X,V\Vdash\varphi$) if $X,V,w\Vdash\varphi$ for every $w\in W$. 
\item We say that $\varphi$ is \emph{admissibly valid} on a Stone space with two relations $X$ (notation: $X\Vdash_{Clop}\varphi$) if $\varphi$ is globally true on $(X,V)$ for every admissible valuation $V$.
\item We say that $\varphi$ is \emph{valid} on a Stone space with two relations $X$ (notation: $X\Vdash_{P}\varphi$) if $\varphi$ is globally true on $(X,V)$ for every arbitrary valuation $V$.
\end{itemize}
\end{definition}

For the semantics of inequalities, meta-conjunctions of inequalities, quasi-inequalities, they are given as follows:

\begin{definition}
$\ $
\begin{itemize}
\item An inequality is interpreted as follows:
$$X,V\Vdash\phi\leq\psi\mbox{ iff }V(\phi)\subseteq V(\psi);$$
$$X,V\Vdash\phi\prec\psi\mbox{ iff }R[V(\phi)]\subseteq V(\psi);$$

\item A meta-conjunction of inequalities is interpreted as follows:
$$X,V\Vdash\phi_1\triangleleft_1\psi_1\ \&\ \ldots \ \&\ \phi_n\triangleleft_n\psi_n\mbox{ iff }X,V\Vdash\phi_i\triangleleft_i\psi_i\mbox{ for all }i=1,\ldots, n;$$

\item A quasi-inequality is interpreted as follows:
$$X,V\Vdash\overline{\phi}\triangleleft\overline{\psi}\  \Rightarrow\ \overline{\gamma}\triangleleft\overline{\delta}\mbox{ iff }$$
$$X,V\Vdash\overline{\gamma}\triangleleft\overline{\delta}\mbox{ holds whenever }X,V\Vdash\overline{\phi}\triangleleft\overline{\psi}.$$
\end{itemize}
\end{definition}

The definitions of validity are similar to formulas.

\section{Preliminaries on Algorithmic Correspondence}\label{Sec:Preliminaries}
In this section, we give necessary preliminaries on the correspondence algorithm $\mathsf{ALBA}$ in the style of \cite{CoGoVa06,CoPa12,Zh21c}. The algorithm $\mathsf{ALBA}$ transforms the input quasi-inequality  $$\phi_1\leq\psi_1\ \&\ldots\&\ \phi_n\leq\psi_n\ \&\ \gamma_1\prec\delta_1\ \&\ldots\&\ \gamma_m\prec\delta_m\ \Rightarrow\ \alpha\triangleleft\beta\mbox{ (where }\triangleleft\in\{\prec,\leq\})$$ into an equivalent set of pure quasi-inequalities which does not contain occurrences of propositional variables, and therefore can be translated into the first-order correspondence language via the standard translation of the expanded language (see page \pageref{Sub:FOL:ST}).

The ingredients for the algorithmic correspondence proof to go through can be listed as follows:

\begin{itemize}
\item An expanded language as the syntax of the algorithm, as well as its semantics;
\item An algorithm $\mathsf{ALBA}$ which transforms a given quasi-inequality into equivalent pure quasi-inequalities;
\item A soundness proof of the algorithm;
\item A syntactically identified class of quasi-inequalities (namely the \emph{inductive quasi-inequalities}) on which the algorithm is successful;
\item A first-order correspondence language and first-order translation which transforms pure quasi-inequalities into their equivalent first-order correspondents.
\item A syntactically identified class of quasi-inequalities (namely the \emph{restricted inductive quasi-inequalities}) which are canonical.
\end{itemize}

In the remainder of this part, we will define an expanded language which the algorithm will manipulate (Section \ref{Sub:expanded:language}), define the first-order correspondence language of the expanded language and the standard translation (Section \ref{Sub:FOL:ST}). We give the definition of inductive quasi-inequalities (Section \ref{sec:Sahlqvist}), define a version of the algorithm $\mathsf{ALBA}$ (Section \ref{Sec:ALBA}), and show its soundness (Section \ref{Sec:Soundness}), success on inductive quasi-inequalities (Section \ref{Sec:Success}) and the canonicity of restricted inductive quasi-inequalities (Section \ref{Sec:Canonicity}).

\subsection{The expanded hybrid modal language}\label{Sub:expanded:language}

In the present subsection, we give the definition of the expanded language, which will be used in the execution of the algorithm:
$$\varphi::=p \mid \nomi \mid \bot \mid \top \mid \neg\varphi \mid (\varphi\land\varphi) \mid (\varphi\lor\varphi) \mid (\varphi\to\varphi) \mid \Box\varphi \mid \Diamond\varphi \mid {\diamdot}\phi\mid{\boxdot}\phi\mid\blacksquare\phi \mid \Diamondblack\phi\mid{\boxdotb}\phi\mid{\diamdotb}\phi$$

where $\nomi\in\mathsf{Nom}$ is called a \emph{nominal}. For $\nomi$, it is interpreted as a singleton set. For $\blacksquare$ and $\Diamondblack$, they are interpreted as the box and diamond modality on the inverse relation $R'^{-1}$, and for ${\boxdotb}$ and ${\diamdotb}$, they are interpreted as the box and diamond modality on the relation $R$.

For the semantics of the expanded language, the valuation $V$ is extended to $\mathsf{Prop}\cup\mathsf{Nom}$ such that $V(\nomi)$ is a singleton for each $\nomi\in\mathsf{Nom}$. \footnote{Notice that we allow admissible valuations to interpret nominals as singletons, even if singletons might not be clopen. The admissibility restrictions are only for the propositional variables.} The additional semantic clauses can be given as follows:
\begin{center}
\begin{tabular}{l c l}
$X,V,w\Vdash\nomi$ & iff & $V(\nomi)=\{w\}$;\\
$X,V,w\Vdash \blacksquare\varphi$ & iff & $\forall v(R'vw\ \Rightarrow\ X,V,v\Vdash\varphi)$;\\
$X,V,w\Vdash\Diamondblack\varphi$ & iff & $\exists v(R'vw\ \mbox{ and }\ X,V,v\Vdash\varphi)$;\\
$X,V,w\Vdash{\boxdotb}\varphi$ & iff & $\forall v(Rwv\ \Rightarrow\ X,V,v\Vdash\varphi)$;\\
$X,V,w\Vdash{\diamdotb}\varphi$ & iff & $\exists v(Rwv\ \mbox{ and }\ X,V,v\Vdash\varphi)$.\\
\end{tabular}
\end{center}

\subsection{The first-order correspondence language and the standard translation}\label{Sub:FOL:ST}

In the first-order correspondence language, we have two binary predicate symbols $R$ and $R'$ corresponding to the two binary relations in the Stone space with two relations, a set of unary predicate symbols $P$ corresponding to each propositional variable $p$.

\begin{definition}
The standard translation of the expanded language is defined as follows:
\begin{itemize}
\item $ST_{x}(p):=Px$;
\item $ST_{x}(\bot):=\bot$;
\item $ST_{x}(\top):=\top$;
\item $ST_{x}(\nomi):=x=i$;
\item $ST_{x}(\neg\phi):=\neg ST_{x}(\phi)$;
\item $ST_{x}(\phi\land\psi):=ST_{x}(\phi)\land ST_{x}(\psi)$;
\item $ST_{x}(\phi\lor\psi):=ST_{x}(\phi)\lor ST_{x}(\psi)$;
\item $ST_{x}(\phi\to\psi):=ST_{x}(\phi)\to ST_{x}(\psi)$;
\item $ST_{x}(\Box\phi):=\forall y(R'xy\to ST_{y}(\phi))$;
\item $ST_{x}(\Diamond\phi):=\exists y(R'xy\land ST_{y}(\phi))$;
\item $ST_{x}(\blacksquare\phi):=\forall y(R'yx\to ST_{y}(\phi))$;
\item $ST_{x}(\Diamondblack\phi):=\exists y(R'yx\land ST_{y}(\phi))$;
\item $ST_{x}({\boxdot}\phi):=\forall y(Ryx\to ST_{y}(\phi))$;
\item $ST_{x}({\diamdot}\phi):=\exists y(Ryx\land ST_{y}(\phi))$;
\item $ST_{x}({\boxdotb}\phi):=\forall y(Rxy\to ST_{y}(\phi))$;
\item $ST_{x}({\diamdotb}\phi):=\exists y(Rxy\land ST_{y}(\phi))$;
\item $ST(\phi\leq\psi):=\forall x(ST_{x}(\phi)\to ST_{x}(\psi))$;
\item $ST(\phi\prec\psi):=\forall x(ST_{x}({\diamdot}\phi)\to ST_{x}(\psi))$;
\item $ST(\phi_1\triangleleft_1\psi_1\ \&\ \ldots \ \&\ \phi_n\triangleleft_n\psi_n):=ST(\phi_1\triangleleft_1\psi_1)\land\ldots\land ST(\phi_n\triangleleft_n\psi_n)$;
\item $ST(\overline{\phi}\triangleleft\overline{\psi}\  \Rightarrow\ \overline{\gamma}\triangleleft\overline{\delta}):=ST(\overline{\phi}\triangleleft\overline{\psi})\to ST(\overline{\gamma}\triangleleft\overline{\delta})$.
\end{itemize}
\end{definition}

It is easy to see that this translation is correct:

\begin{proposition}
For any Stone space with two relations $X$, any valuation $V$ on $X$, any $w\in X$ and any expanded language formula $\phi$, 
$$X,V,w\Vdash\phi\mbox{ iff }X,V\vDash ST_{x}(\phi)[w].$$
\end{proposition}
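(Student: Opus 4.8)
The plan is to prove the correctness of the standard translation by structural induction on the expanded-language formula $\phi$, simultaneously with the statement about inequalities, meta-conjunctions, and quasi-inequalities. First I would fix a Stone space with two relations $X=(X,\tau,R,R')$ and a valuation $V$ (admissible or arbitrary — the argument is the same, since the semantic clauses do not distinguish), and set up the assignment sending each first-order variable $x$ to the corresponding point $w\in X$ and each unary predicate symbol $P$ to the set $V(p)$. The claim $X,V,w\Vdash\phi$ iff $X,V\vDash ST_x(\phi)[w]$ is then checked clause by clause against the two lists of semantic clauses (the one in Section~\ref{Subsec:Seman} and the one in Section~\ref{Sub:expanded:language}) and the definition of $ST$.

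The base cases are immediate: for $p$ we have $X,V,w\Vdash p$ iff $w\in V(p)$ iff $X,V\vDash Px[w]$; for $\nomi$ we have $X,V,w\Vdash\nomi$ iff $V(\nomi)=\{w\}$ iff (since $V(\nomi)$ is a singleton) the point named by $i$ equals $w$, i.e.\ $X,V\vDash (x=i)[w]$; and $\bot,\top$ are trivial. The Boolean cases $\neg,\land,\lor,\to$ follow directly from the induction hypothesis together with the matching clauses in the definition of $ST$. For the modal cases, each of $\Box,\Diamond,\blacksquare,\Diamondblack,{\boxdot},{\diamdot},{\boxdotb},{\diamdotb}$ is handled by unfolding its semantic clause, applying the induction hypothesis to the subformula at the witnessing/universally-quantified point $v$, and observing that this matches the quantifier pattern in $ST_x$ — the only point requiring care is the direction of the accessibility relation: $\Box,\Diamond$ use $R'xy$, $\blacksquare,\Diamondblack$ use $R'yx$, ${\boxdot},{\diamdot}$ use $Ryx$, and ${\boxdotb},{\diamdotb}$ use $Rxy$, exactly as prescribed by the semantics. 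Finally, for the syntactic objects: $ST(\phi\leq\psi)$ unfolds to $\forall x(ST_x(\phi)\to ST_x(\psi))$, which by the formula-level claim holds iff $V(\phi)\subseteq V(\psi)$; $ST(\phi\prec\psi)$ unfolds to $\forall x(ST_x({\diamdot}\phi)\to ST_x(\psi))$, and since $ST_x({\diamdot}\phi)$ defines $R[V(\phi)]$ we get exactly $R[V(\phi)]\subseteq V(\psi)$; meta-conjunctions and quasi-inequalities then follow by propositional reasoning from the definition of $ST$ and the semantics.

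Strictly speaking there is essentially no obstacle here — the proposition is a routine verification, which is why the paper states it as "easy to see". The only place a reader might stumble is bookkeeping around the reversed relations for the black/dotted operators, so in writing the proof I would display the clause for, say, ${\boxdot}$ in full as the representative case (showing $X,V,w\Vdash{\boxdot}\varphi$ iff $\forall v(Rvw\Rightarrow X,V,v\Vdash\varphi)$ iff, by IH, $\forall v(Rvw\Rightarrow X,V\vDash ST_y(\varphi)[v])$ iff $X,V\vDash\forall y(Ryx\to ST_y(\varphi))[w]$) and leave the remaining modal cases to the reader as "analogous". I would also remark that the equivalence $ST(\phi\prec\psi)$ with $R[V(\phi)]\subseteq V(\psi)$ is precisely the reason the semantics of ${\diamdot}$ was set up in the reverse direction of $R$, tying back to the remark made after the satisfaction clauses.
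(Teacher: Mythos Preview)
Your proposal is correct and is exactly the routine structural induction the paper has in mind; the paper does not actually spell out a proof, merely saying ``It is easy to see that this translation is correct,'' so your clause-by-clause verification (with the representative ${\boxdot}$ case displayed and the reversed-relation bookkeeping noted) is precisely what is intended. One minor remark: the inequality, meta-conjunction, and quasi-inequality clauses you include at the end belong to the \emph{next} proposition (Proposition~\ref{Prop:ST:ineq:quasi:mega}), not to the statement you were asked to prove, so you may want to separate them out.
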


\begin{proposition}\label{Prop:ST:ineq:quasi:mega}
For any Stone space with two relations $X$, any valuation $V$ on $X$, and inequality $\mathsf{Ineq}$, meta-conjunction of inequalities $\mathsf{MetaConIneq}$, quasi-inequality $\mathsf{Quasi}$,
$$X,V\Vdash\mathsf{Ineq}\mbox{ iff }X,V\vDash ST(\mathsf{Ineq});$$
$$X,V\Vdash\mathsf{MetaConIneq}\mbox{ iff }X,V\vDash ST(\mathsf{MetaConIneq});$$
$$X,V\Vdash\mathsf{Quasi}\mbox{ iff }X,V\vDash ST(\mathsf{Quasi}).$$
\end{proposition}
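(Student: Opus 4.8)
The plan is to reduce everything to the formula-level translation correctness established in the previous proposition (that $X,V,w\Vdash\phi$ iff $X,V\vDash ST_{x}(\phi)[w]$ for every expanded-language formula $\phi$), and then merely unwind the semantic clauses for inequalities, meta-conjunctions of inequalities, and quasi-inequalities against the matching clauses in the definition of $ST$. No fresh induction on formula complexity is needed; all the remaining work is bookkeeping at the level of (meta-)inequalities. I would prove the three equivalences in the order in which they are stated, since each one feeds into the next.

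First I would handle the two shapes of inequality. For $\phi\leq\psi$: by definition $X,V\Vdash\phi\leq\psi$ iff $V(\phi)\subseteq V(\psi)$, which unwinds to ``for all $w\in X$, $X,V,w\Vdash\phi$ implies $X,V,w\Vdash\psi$''; applying formula-level correctness pointwise in $w$ turns this into ``for all $w$, $X,V\vDash ST_{x}(\phi)[w]$ implies $X,V\vDash ST_{x}(\psi)[w]$'', i.e.\ $X,V\vDash\forall x(ST_{x}(\phi)\to ST_{x}(\psi))$, which is $ST(\phi\leq\psi)$. For $\phi\prec\psi$: by definition $X,V\Vdash\phi\prec\psi$ iff $R[V(\phi)]\subseteq V(\psi)$; the one point requiring care is to observe that, under the reverse-direction reading of ${\diamdot}$, one has $V({\diamdot}\phi)=\{w:\exists v\in V(\phi),\,Rvw\}=R[V(\phi)]$, so the condition becomes $V({\diamdot}\phi)\subseteq V(\psi)$, and then the same pointwise argument as in the $\leq$ case, now applied to the formulas ${\diamdot}\phi$ and $\psi$, yields $X,V\vDash\forall x(ST_{x}({\diamdot}\phi)\to ST_{x}(\psi))$, which is exactly $ST(\phi\prec\psi)$.

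Next I would treat a meta-conjunction $\mathsf{MetaConIneq}=\phi_1\triangleleft_1\psi_1\ \&\ \ldots\ \&\ \phi_n\triangleleft_n\psi_n$: by definition $X,V\Vdash\mathsf{MetaConIneq}$ iff $X,V\Vdash\phi_i\triangleleft_i\psi_i$ for every $i$, which by the inequality case is iff $X,V\vDash ST(\phi_i\triangleleft_i\psi_i)$ for every $i$, i.e.\ $X,V\vDash ST(\phi_1\triangleleft_1\psi_1)\land\ldots\land ST(\phi_n\triangleleft_n\psi_n)$, which is $ST(\mathsf{MetaConIneq})$ by the definition of $ST$ on meta-conjunctions. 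Finally, for a quasi-inequality $\mathsf{Quasi}=\overline{\phi}\triangleleft\overline{\psi}\Rightarrow\overline{\gamma}\triangleleft\overline{\delta}$: by definition $X,V\Vdash\mathsf{Quasi}$ iff $X,V\Vdash\overline{\gamma}\triangleleft\overline{\delta}$ holds whenever $X,V\Vdash\overline{\phi}\triangleleft\overline{\psi}$; using the meta-conjunction case on both the antecedent and the consequent, this is iff $X,V\vDash ST(\overline{\gamma}\triangleleft\overline{\delta})$ holds whenever $X,V\vDash ST(\overline{\phi}\triangleleft\overline{\psi})$, i.e.\ $X,V\vDash ST(\overline{\phi}\triangleleft\overline{\psi})\to ST(\overline{\gamma}\triangleleft\overline{\delta})$, which is $ST(\mathsf{Quasi})$.

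Since every step is a direct rewriting of one semantic clause into the corresponding first-order clause, there is no genuine obstacle here; the only subtlety worth flagging explicitly is the $\prec$ clause, where one must remember that $ST(\phi\prec\psi)$ is defined through ${\diamdot}\phi$ and verify that $V({\diamdot}\phi)$ does compute $R[V(\phi)]$ with the directions as set up in the semantics. Everything else is a routine unwinding of definitions on top of the already-proved formula-level correctness.
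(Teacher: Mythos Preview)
Your proposal is correct and is precisely the routine unwinding that the paper has in mind; the paper itself states this proposition without proof, treating it as an immediate consequence of the formula-level correctness together with the definitions of the semantics and of $ST$ on inequalities, meta-conjunctions, and quasi-inequalities. The one point you single out as needing care---that $V({\diamdot}\phi)=R[V(\phi)]$ under the reverse-direction semantics for ${\diamdot}$---is exactly the observation the paper also flags elsewhere (when justifying the subordination rewriting rule), so your treatment matches the paper's approach.
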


\section{Inductive Quasi-Inequalities for Modal Subordination Algebras}\label{sec:Sahlqvist}

In this section, we define inductive quasi-inequalities for modal subordination algebras and Stone spaces with two relations. Here we consider quasi-inequalities of the form $$\phi_1\leq\psi_1\ \&\ldots\&\ \phi_n\leq\psi_n\ \&\ \gamma_1\prec\delta_1\ \&\ldots\&\ \gamma_m\prec\delta_m\ \Rightarrow\ \alpha\triangleleft\beta\mbox{ (where }\triangleleft\in\{\prec,\leq\}),$$

where $n,m\geq 0$. We follow the presentation of \cite{CPZ:Trans}.

\begin{definition}[Order-type of propositional variables](cf.\ \cite[page 346]{CoPa12})
For an $n$-tuple $(p_1, \ldots, p_n)$ of propositional variables, an order-type $\epsilon$ of $(p_1, \ldots, p_n)$ is an element in $\{1,\partial\}^{n}$. We say that $p_i$ has order-type 1 if $\epsilon_i=1$, and denote $\epsilon(p_i)=1$ or $\epsilon(i)=1$; we say that $p_i$ has order-type $\partial$ if $\epsilon_i=\partial$, and denote $\epsilon(p_i)=\partial$ or $\epsilon(i)=\partial$.
\end{definition}

\begin{definition}[Signed generation tree]\label{adef: signed gen tree}(cf.\ \cite[Definition 4]{CPZ:Trans})
The \emph{positive} (resp.\ \emph{negative}) {\em generation tree} of any given formula $\theta$ is defined by first labelling the root of the generation tree of $\theta$ with $+$ (resp.\ $-$) and then labelling the children nodes as follows:
\begin{itemize}
\item Assign the same sign to the children nodes of any node labelled with $\Box, \Diamond, \boxdot, \diamdot$, $\blacksquare, \Diamondblack, {\boxdotb}, {\diamdotb}, \lor, \land$;
\item Assign the opposite sign to the child node of any node labelled with $\neg$;
\item Assign the opposite sign to the first child node and the same sign to the second child node of any node labelled with $\to$.
\end{itemize}
Nodes in signed generation trees are \emph{positive} (resp.\ \emph{negative}) if they are signed $+$ (resp.\ $-$).
\end{definition}

Signed generation trees will be used in the quasi-inequalities 
$$\phi_1\leq\psi_1\ \&\ldots\&\ \phi_n\leq\psi_n\ \&\ \gamma_1\prec\delta_1\ \&\ldots\&\ \gamma_m\prec\delta_m\ \Rightarrow\ \alpha\triangleleft\beta\mbox{ (where }\triangleleft\in\{\prec,\leq\}),$$
where the positive generation trees $+\psi_i,+\delta_j,+\alpha$ and the negative generation trees $-\phi_i,-\gamma_j,-\beta$ will be considered. We will also say that a quasi-inequality is \emph{uniform} in a variable $p_i$ if all occurrences of $p_i$ in $+\psi_i,+\delta_j,+\alpha, -\phi_i,-\gamma_j,-\beta$ have the same sign, and that a quasi-inequality is $\epsilon$-\emph{uniform} in an array $\vec{p}$ if it is uniform in $p_i$, occurring with the sign indicated by $\epsilon$ (i.e., $p_i$ has the sign $+$ if $\epsilon(p_i)=1$, and has the sign $-$ if $\epsilon(p_i)=\partial$), for each propositional variable $p_i$ in $\vec{p}$.

For any given formula $\theta(p_1,\ldots, p_n)$, any order-type $\epsilon$ over $n$, and any $1 \leq i \leq n$, an \emph{$\epsilon$-critical node} in a signed generation tree of $\theta$ is a leaf node $+p_i$ when $\epsilon_i = 1$ or $-p_i$ when $\epsilon_i = \partial$. An $\epsilon$-{\em critical branch} in a signed generation tree is a branch from an $\epsilon$-critical nodes. The $\epsilon$-critical occurrences are intended to be those which the algorithm $\mathsf{ALBA}$ will solve for. We say that $+\theta$ (resp.\ $-\theta$) is $\epsilon$-uniform, and write $\epsilon(+\theta)$ (resp.\ $\epsilon(-\theta)$), if every leaf node in the signed generation tree of $+\theta$ (resp.\ $-\theta$) is $\epsilon$-critical.

We will also use the notation $+\iota\ll\ast\theta$ (resp.\ $-\iota\ll \ast\theta$) to indicate that an occurence of a subformula $\iota$ inherits the positive (resp.\ negative) sign from the signed generation tree $\ast\theta$, where $\ast\in\{+,-\}$. We will write $\epsilon(\iota)\ll\ast\theta$ (resp.\ $\epsilon^\partial(\iota)\ll\ast\theta$) to indicate that the signed generation subtree $\iota$, with the sign inherited from $\ast\theta$, is $\epsilon$-uniform (resp.\ $\epsilon^\partial$-uniform). We say that a propositional variable $p$ is \emph{positive} (resp.\ \emph{negative}) in $\theta$ if $+p\ll+\theta$ (resp.\ $-p\ll+\theta$).\label{page:epsilon:subtree}

In what follows, we will use the following classification of nodes:

\begin{definition}\label{adef:good:branches}(Classification of nodes, cf.\ \cite[Definition 5]{CPZ:Trans})
Nodes in signed generation trees are called \emph{$\Delta$-adjoints}, \emph{syntactically left residual (SLR)}, \emph{syntactically right adjoint (SRA)}, and \emph{syntactically right residual (SRR)}, according to Table \ref{aJoin:and:Meet:Friendly:Table}.\footnote{For a detailed explanation why these names are used, see \cite[Remark 3.24]{PaSoZh16}.}
\begin{table}
\begin{center}
\begin{tabular}{| c | c |}
\hline
Skeleton &PIA\\
\hline
$\Delta$-adjoints & SRA \\
\begin{tabular}{ c c c c c c}
$+$ &$\vee$ &\\
$-$ &$\wedge$ &\\
\end{tabular}
&
\begin{tabular}{c c c c c c c }
$+$ &$\wedge$ & $\neg$ & $\Box$ & $\blacksquare$ & ${\boxdot}$ & ${\boxdotb}$\\
$-$ &$\vee$ & $\neg$ & $\Diamond$ & $\Diamondblack$ & ${\diamdot}$ & ${\diamdotb}$ \\
\end{tabular}\\
\hline
SLR &SRR\\
\begin{tabular}{c c c c c c c c}
$+$ & $\wedge$ & $\neg$ & $\Diamond$ & $\Diamondblack$ & ${\diamdot}$ & ${\diamdotb}$\\
$-$ & $\vee$ & $\neg$ & $\Box$ & $\blacksquare$ & ${\boxdot}$  & ${\boxdotb}$& $\to$\\
\end{tabular}
&\begin{tabular}{c c c c}
$+$ &$\vee$ &$\to$\\
$-$ & $\wedge$ &\\
\end{tabular}
\\
\hline
\end{tabular}
\end{center}
\caption{Skeleton and PIA nodes.}\label{aJoin:and:Meet:Friendly:Table}
\vspace{-1em}
\end{table}
\end{definition}

\begin{definition}[Good/PIA/Skeleton branches]
A branch in a signed generation tree is called a 
\begin{itemize}
\item \emph{good branch} if it is the concatenation of two paths $P_1$ and $P_2$, one of which might be of length $0$, such that $P_1$ is a path from the leaf consisting (apart from variable nodes) of PIA-nodes only, and $P_2$ consists (apart from variable nodes) of Skeleton-nodes only;
\item \emph{PIA branch} if it is a good branch and $P_2$ is of length 0;
\item \emph{Skeleton branch} if it is a good branch and $P_1$ is of length 0;
\end{itemize}
\end{definition}

\begin{definition}[Inductive signed generation trees]\label{aInducive:Ineq:Def}(cf.\ \cite[Definition 6]{CPZ:Trans})
For any order-type $\epsilon$ and any irreflexive and transitive binary relation $<_\Omega$ on $p_1,\ldots p_n$ (called \emph{dependence order} on the variables), the signed generation tree $*\theta$ $(* \in \{-, + \})$ of a formula $\theta(p_1,\ldots p_n)$ is \emph{$(\Omega, \epsilon)$-inductive} if
\begin{enumerate}
\item for all $1 \leq i \leq n$, every $\epsilon$-critical branch with leaf $p_i$ is good;
\item every SRR-node in an $\epsilon$-critical branch is either $\bigstar(\iota,\eta)$ or $\bigstar(\eta,\iota)$, where $\bigstar$ is a binary connective, the $\epsilon$-critical branch goes through $\eta$, and
\begin{enumerate}
\item $\epsilon^\partial(\iota) \ll \ast \theta$ (cf.\ page \pageref{page:epsilon:subtree}), and
\item $p_k <_{\Omega} p_i$ for every $p_k$ that occurs in $\iota$.
\end{enumerate}
\end{enumerate}
\end{definition}

The definitions above are mostly straightforward adaptations from standard $\mathsf{ALBA}$ setting for inequalities to the present paper. The following two definitions are specific in the quasi-inequality setting. Intuitively, the receiving inequality $\phi\leq\psi$ (resp.\ $\gamma\prec\delta$) is used to receive minimal valuations, while the solvable inequality $\phi\leq\psi$ (resp.\ $\gamma\prec\delta$) will be transformed into inequalities with minimal valuation $\theta\leq p$ (if $\epsilon(p)=1$) or $p\leq\theta$ (if $\epsilon(p)=\partial$), such that all propositional variables $q$ occurring in $\theta$ has dependence order below $p$, i.e.\ $q<_{\Omega}p$.

\begin{definition}[Receiving inequality]
An inequality $\phi\leq\psi$ (resp.\ $\gamma\prec\delta$) is said to be $(\Omega, \epsilon)$-receiving, if both of $-\phi,+\psi$ (resp.\ $-\gamma,+\delta$) are $\epsilon^{\partial}$-uniform;
\end{definition}

\begin{definition}[Solvable inequality]
An inequality $\phi\leq\psi$ (resp.\ $\gamma\prec\delta$) is said to be $(\Omega, \epsilon)$-solvable, if

\begin{itemize}
\item exactly one of $-\phi,+\psi$ (resp.\ $-\gamma,+\delta$) is $\epsilon^{\partial}$-uniform (without loss of generality we denote the $\epsilon^{\partial}$-uniform one $\star\theta$ and the other one $*\iota$);
\item $*\iota$ is $(\Omega, \epsilon)$-inductive, and all $\epsilon$-critical branches in $*\iota$ are PIA branches;
\item for all the $\epsilon$-critical branches in $*\iota$ ending with $p$, all propositional variables $q$ in $\star\theta$, we have $q<_{\Omega} p$.
\end{itemize}
\end{definition}

\begin{definition}
A quasi-inequality 
$$\phi_1\leq\psi_1\ \&\ldots\&\ \phi_n\leq\psi_n\ \&\ \gamma_1\prec\delta_1\ \&\ldots\&\ \gamma_m\prec\delta_m\ \Rightarrow\ \alpha\triangleleft\beta\mbox{ (where }\triangleleft\in\{\prec,\leq\})$$ is \emph{$(\Omega, \epsilon)$-inductive} if

\begin{itemize}
\item each inequality $\phi\leq\psi$ and $\gamma\prec\delta$ is either $(\Omega, \epsilon)$-receiving or $(\Omega, \epsilon)$-solvable;
\item $+\alpha$ and $-\beta$ are $(\Omega, \epsilon)$-inductive signed generation trees;
\end{itemize}

A quasi-inequality is \emph{inductive} if it is $(\Omega, \epsilon)$-inductive for some ($\Omega$, $\epsilon$).
\end{definition}

\section{Algorithm}\label{Sec:ALBA}
In the present section, we define the algorithm $\mathsf{ALBA}$ which compute the first-order correspondence of the input quasi-inequality in the style of \cite{CoPa12}. The algorithm $\mathsf{ALBA}$ proceeds in three stages. Firstly, $\mathsf{ALBA}$ receives a quasi-inequality $$\phi_1\leq\psi_1\ \&\ldots\&\ \phi_n\leq\psi_n\ \&\ \gamma_1\prec\delta_1\ \&\ldots\&\ \gamma_m\prec\delta_m\ \Rightarrow\ \alpha\triangleleft\beta\mbox{ (where }\triangleleft\in\{\prec,\leq\})$$
as input, which do not contain nominals or black connectives $\Diamondblack,\blacksquare,{\boxdotb},{\diamdotb}$.
\begin{enumerate}
\item \textbf{Preprocessing and first approximation}:
\begin{enumerate}
\item In each inequality $\theta\triangleleft\eta$ (where $\triangleleft\in\{\leq,\prec\}$) in the quasi-inequality, consider the signed generation trees $+\theta$ and $-\eta$, apply the distribution rules:
\begin{enumerate}
\item Push down $+\Diamond,+{\diamdot}, -\neg, +\land, -\to$ by distributing them over nodes labelled with $+\lor$ which are Skeleton nodes (see Figure \ref{Figure:distribution:rules}), and
\item Push down $-\Box,-{\boxdot},+\neg, -\lor, -\to$ by distributing them over nodes labelled with $-\land$ which are Skeleton nodes (see Figure \ref{Figure:distribution:rules:2}).
\end{enumerate}

\begin{figure}[htb]
\centering
\begin{multicols}{8}
\begin{tikzpicture}[scale=0.7]
\tikzstyle{level 1}=[level distance=1cm, sibling distance=1cm]
\tikzstyle{level 2}=[level distance=1cm, sibling distance=1cm]
\tikzstyle{level 3}=[level distance=1cm, sibling distance=1cm]
 \node {$+\Diamond$}         
              child{node{$+\lor$}
                     child{node{$+\alpha$}}
                           child{node{$+\beta$}}}
;
\end{tikzpicture}
\columnbreak

$\Rightarrow$
\columnbreak

\begin{tikzpicture}[scale=0.7]
\tikzstyle{level 1}=[level distance=1cm, sibling distance=1cm]
\tikzstyle{level 2}=[level distance=1cm, sibling distance=1cm]
\tikzstyle{level 3}=[level distance=1cm, sibling distance=1cm]
 \node {$+\lor$}
              child{node{$+\Diamond$}
                     child{node{$+\alpha$}}}
              child{node{$+\Diamond$}
                           child{node{$+\beta$}}}
 ;
\end{tikzpicture}

\columnbreak

$\ $
\columnbreak

$\ $
\columnbreak

\begin{tikzpicture}[scale=0.7]
\tikzstyle{level 1}=[level distance=1cm, sibling distance=1cm]
\tikzstyle{level 2}=[level distance=1cm, sibling distance=1cm]
\tikzstyle{level 3}=[level distance=1cm, sibling distance=1cm]
 \node {$-\neg$}         
              child{node{$+\lor$}
                     child{node{$+\alpha$}}
                           child{node{$+\beta$}}}
 ;
\end{tikzpicture}

\columnbreak

$\Rightarrow$
\columnbreak

\begin{tikzpicture}[scale=0.7]
\tikzstyle{level 1}=[level distance=1cm, sibling distance=1cm]
\tikzstyle{level 2}=[level distance=1cm, sibling distance=1cm]
\tikzstyle{level 3}=[level distance=1cm, sibling distance=1cm]
 \node {$-\land$}
              child{node{$-\neg$}
                     child{node{$+\alpha$}}}
              child{node{$-\neg$}
                           child{node{$+\beta$}}}
 ;
\end{tikzpicture}
\end{multicols}

\centering
\begin{multicols}{8}
\begin{tikzpicture}[scale=0.7]
\tikzstyle{level 1}=[level distance=1cm, sibling distance=1cm]
\tikzstyle{level 2}=[level distance=1cm, sibling distance=1cm]
\tikzstyle{level 3}=[level distance=1cm, sibling distance=1cm]
 \node {$+\land$}         
              child{node{$+\lor$}
                     child{node{$+\alpha$}}
                           child{node{$+\beta$}}}
              child{node{+$\gamma$}}
;
\end{tikzpicture}
\columnbreak

$\ \ \ \ \ \ \ \ \Rightarrow$
\columnbreak

\begin{tikzpicture}[scale=0.7]
\tikzstyle{level 1}=[level distance=1cm, sibling distance=2cm]
\tikzstyle{level 2}=[level distance=1cm, sibling distance=1cm]
\tikzstyle{level 3}=[level distance=1cm, sibling distance=1cm]
 \node {$+\lor$}
              child{node{$+\land$}
                     child{node{$+\alpha$}}
                     child{node{$+\gamma$}}}
              child{node{$+\land$}
                           child{node{$+\beta$}}
                           child{node{$+\gamma$}}}
 ;
\end{tikzpicture}

\columnbreak

$\ $
\columnbreak

$\ $
\columnbreak

\begin{tikzpicture}[scale=0.7]
\tikzstyle{level 1}=[level distance=1cm, sibling distance=1cm]
\tikzstyle{level 2}=[level distance=1cm, sibling distance=1cm]
\tikzstyle{level 3}=[level distance=1cm, sibling distance=1cm]
 \node {$+\land$}         
              child{node{+$\alpha$}}
              child{node{$+\lor$}
                     child{node{$+\beta$}}
                           child{node{$+\gamma$}}}
 ;
\end{tikzpicture}

\columnbreak

$\ \ \ \ \ \ \ \ \Rightarrow$
\columnbreak

\begin{tikzpicture}[scale=0.7]
\tikzstyle{level 1}=[level distance=1cm, sibling distance=2cm]
\tikzstyle{level 2}=[level distance=1cm, sibling distance=1cm]
\tikzstyle{level 3}=[level distance=1cm, sibling distance=1cm]
 \node {$+\lor$}
              child{node{$+\land$}
                     child{node{$+\alpha$}}
                     child{node{$+\beta$}}}
              child{node{$+\land$}
                           child{node{$+\alpha$}}
                           child{node{$+\gamma$}}}
 ;
\end{tikzpicture}
\end{multicols}

\centering
\begin{multicols}{8}
\begin{tikzpicture}[scale=0.7]
\tikzstyle{level 1}=[level distance=1cm, sibling distance=1cm]
\tikzstyle{level 2}=[level distance=1cm, sibling distance=1cm]
\tikzstyle{level 3}=[level distance=1cm, sibling distance=1cm]
 \node {$+{\diamdot}$}         
              child{node{$+\lor$}
                     child{node{$+\alpha$}}
                           child{node{$+\beta$}}}
;
\end{tikzpicture}
\columnbreak

$\Rightarrow$
\columnbreak

\begin{tikzpicture}[scale=0.7]
\tikzstyle{level 1}=[level distance=1cm, sibling distance=1cm]
\tikzstyle{level 2}=[level distance=1cm, sibling distance=1cm]
\tikzstyle{level 3}=[level distance=1cm, sibling distance=1cm]
 \node {$+\lor$}
              child{node{$+{\diamdot}$}
                     child{node{$+\alpha$}}}
              child{node{$+{\diamdot}$}
                           child{node{$+\beta$}}}
 ;
\end{tikzpicture}
\columnbreak

$\ $
\columnbreak

$\ $
\columnbreak

\begin{tikzpicture}[scale=0.7]
\tikzstyle{level 1}=[level distance=1cm, sibling distance=1cm]
\tikzstyle{level 2}=[level distance=1cm, sibling distance=1cm]
\tikzstyle{level 3}=[level distance=1cm, sibling distance=1cm]
 \node {$-\to$}         
              child{node{$+\lor$}
                     child{node{$+\alpha$}}
                           child{node{$+\beta$}}}
              child{node{$-\gamma$}}
;
\end{tikzpicture}
\columnbreak

$\ \ \ \ \ \ \ \ \Rightarrow$
\columnbreak

\begin{tikzpicture}[scale=0.7]
\tikzstyle{level 1}=[level distance=1cm, sibling distance=2cm]
\tikzstyle{level 2}=[level distance=1cm, sibling distance=1cm]
\tikzstyle{level 3}=[level distance=1cm, sibling distance=1cm]
 \node {$-\land$}
              child{node{$-\to$}
                     child{node{$+\alpha$}}
                     child{node{$-\gamma$}}}
              child{node{$-\to$}
                           child{node{$+\beta$}}
                           child{node{$-\gamma$}}}
 ;
\end{tikzpicture}
\end{multicols}
\caption{Distribution rules for $+\lor$}
\label{Figure:distribution:rules}
\end{figure}

\begin{figure}[htb]

\centering
\begin{multicols}{8}
\begin{tikzpicture}[scale=0.7]
\tikzstyle{level 1}=[level distance=1cm, sibling distance=1cm]
\tikzstyle{level 2}=[level distance=1cm, sibling distance=1cm]
\tikzstyle{level 3}=[level distance=1cm, sibling distance=1cm]
 \node {$-\Box$}         
              child{node{$-\land$}
                     child{node{$-\alpha$}}
                           child{node{$-\beta$}}}
 ;
\end{tikzpicture}

\columnbreak

$\ \ \ \ \ \ \ \ \Rightarrow$
\columnbreak

\begin{tikzpicture}[scale=0.7]
\tikzstyle{level 1}=[level distance=1cm, sibling distance=1cm]
\tikzstyle{level 2}=[level distance=1cm, sibling distance=1cm]
\tikzstyle{level 3}=[level distance=1cm, sibling distance=1cm]
 \node {$-\land$}
              child{node{$-\Box$}
                     child{node{$-\alpha$}}}
              child{node{$-\Box$}
                           child{node{$-\beta$}}}
 ;
\end{tikzpicture}

\columnbreak

$\ $
\columnbreak

$\ $
\columnbreak

\begin{tikzpicture}[scale=0.7]
\tikzstyle{level 1}=[level distance=1cm, sibling distance=1cm]
\tikzstyle{level 2}=[level distance=1cm, sibling distance=1cm]
\tikzstyle{level 3}=[level distance=1cm, sibling distance=1cm]
 \node {$+\neg$}         
              child{node{$-\land$}
                     child{node{$-\alpha$}}
                           child{node{$-\beta$}}}
;
\end{tikzpicture}
\columnbreak

$\Rightarrow$
\columnbreak

\begin{tikzpicture}[scale=0.7]
\tikzstyle{level 1}=[level distance=1cm, sibling distance=1cm]
\tikzstyle{level 2}=[level distance=1cm, sibling distance=1cm]
\tikzstyle{level 3}=[level distance=1cm, sibling distance=1cm]
 \node {$+\lor$}
              child{node{$+\neg$}
                     child{node{$-\alpha$}}}
              child{node{$+\neg$}
                           child{node{$-\beta$}}}
 ;
\end{tikzpicture}
\end{multicols}

\centering
\begin{multicols}{8}
\begin{tikzpicture}[scale=0.7]
\tikzstyle{level 1}=[level distance=1cm, sibling distance=1cm]
\tikzstyle{level 2}=[level distance=1cm, sibling distance=1cm]
\tikzstyle{level 3}=[level distance=1cm, sibling distance=1cm]
 \node {$-\lor$}         
              child{node{$-\land$}
                     child{node{$-\alpha$}}
                           child{node{$-\beta$}}}
              child{node{$-\gamma$}}
 ;
\end{tikzpicture}

\columnbreak

$\Rightarrow$
\columnbreak

\begin{tikzpicture}[scale=0.7]
\tikzstyle{level 1}=[level distance=1cm, sibling distance=2cm]
\tikzstyle{level 2}=[level distance=1cm, sibling distance=1cm]
\tikzstyle{level 3}=[level distance=1cm, sibling distance=1cm]
 \node {$-\land$}
              child{node{$-\lor$}
                     child{node{$-\alpha$}}
                     child{node{$-\gamma$}}}
              child{node{$-\lor$}
                           child{node{$-\beta$}}
                           child{node{$-\gamma$}}}
 ;
\end{tikzpicture}

\columnbreak

$\ $
\columnbreak

$\ $
\columnbreak

\begin{tikzpicture}[scale=0.7]
\tikzstyle{level 1}=[level distance=1cm, sibling distance=1cm]
\tikzstyle{level 2}=[level distance=1cm, sibling distance=1cm]
\tikzstyle{level 3}=[level distance=1cm, sibling distance=1cm]
 \node {$-\lor$}         
              child{node{$-\alpha$}}
              child{node{$-\land$}
                     child{node{$-\beta$}}
                           child{node{$-\gamma$}}}
;
\end{tikzpicture}
\columnbreak

$\Rightarrow$
\columnbreak

\begin{tikzpicture}[scale=0.7]
\tikzstyle{level 1}=[level distance=1cm, sibling distance=2cm]
\tikzstyle{level 2}=[level distance=1cm, sibling distance=1cm]
\tikzstyle{level 3}=[level distance=1cm, sibling distance=1cm]
 \node {$-\land$}
              child{node{$-\lor$}
                     child{node{$-\alpha$}}
                     child{node{$-\beta$}}}
              child{node{$-\lor$}
                           child{node{$-\alpha$}}
                           child{node{$-\gamma$}}}
 ;
\end{tikzpicture}
\end{multicols}

\centering
\begin{multicols}{8}
\begin{tikzpicture}[scale=0.7]
\tikzstyle{level 1}=[level distance=1cm, sibling distance=1cm]
\tikzstyle{level 2}=[level distance=1cm, sibling distance=1cm]
\tikzstyle{level 3}=[level distance=1cm, sibling distance=1cm]
 \node {$-{\boxdot}$}         
              child{node{$-\land$}
                     child{node{$-\alpha$}}
                           child{node{$-\beta$}}}
 ;
\end{tikzpicture}

\columnbreak

$\ \ \ \ \ \ \ \ \Rightarrow$
\columnbreak

\begin{tikzpicture}[scale=0.7]
\tikzstyle{level 1}=[level distance=1cm, sibling distance=1cm]
\tikzstyle{level 2}=[level distance=1cm, sibling distance=1cm]
\tikzstyle{level 3}=[level distance=1cm, sibling distance=1cm]
 \node {$-\land$}
              child{node{$-{\boxdot}$}
                     child{node{$-\alpha$}}}
              child{node{$-{\boxdot}$}
                           child{node{$-\beta$}}}
 ;
\end{tikzpicture}
\columnbreak

$\ $
\columnbreak

$\ $
\columnbreak

\begin{tikzpicture}[scale=0.7]
\tikzstyle{level 1}=[level distance=1cm, sibling distance=1cm]
\tikzstyle{level 2}=[level distance=1cm, sibling distance=1cm]
\tikzstyle{level 3}=[level distance=1cm, sibling distance=1cm]
 \node {$-\to$}         
              child{node{+$\alpha$}}
              child{node{$-\land$}
                     child{node{$-\beta$}}
                           child{node{$-\gamma$}}}
 ;
\end{tikzpicture}

\columnbreak

$\Rightarrow$
\columnbreak

\begin{tikzpicture}[scale=0.7]
\tikzstyle{level 1}=[level distance=1cm, sibling distance=2cm]
\tikzstyle{level 2}=[level distance=1cm, sibling distance=1cm]
\tikzstyle{level 3}=[level distance=1cm, sibling distance=1cm]
 \node {$-\land$}
              child{node{$-\to$}
                     child{node{$+\alpha$}}
                     child{node{$-\beta$}}}
              child{node{$-\to$}
                           child{node{$+\alpha$}}
                           child{node{$-\gamma$}}}
 ;
\end{tikzpicture}
\end{multicols}
\caption{Distribution rules for $-\land$}
\label{Figure:distribution:rules:2}
\end{figure}

\item Apply the splitting rules to each inequality occurring in the quasi-inequality:

$$\infer{\theta\leq\eta\ \&\ \theta\leq\iota}{\theta\leq\eta\land\iota}
\qquad
\infer{\theta\leq\iota\ \&\ \eta\leq\iota}{\theta\lor\eta\leq\iota}
$$

$$\infer{\theta\prec\eta\ \&\ \theta\prec\iota}{\theta\prec\eta\land\iota}
\qquad
\infer{\theta\prec\iota\ \&\ \eta\prec\iota}{\theta\lor\eta\prec\iota}
$$

\item Apply the monotone and antitone variable-elimination rules to the whole quasi-inequality:

$$\infer{\overline{\phi}(\bot)\leq\overline{\psi}(\bot)\ \&\ \overline{\gamma}(\bot)\prec\overline{\delta}(\bot)\ \Rightarrow\ \overline{\alpha}(\bot)\leq\overline{\beta}(\bot)\ \&\ \overline{\xi}(\bot)\prec\overline{\chi}(\bot)}{\overline{\phi}(p)\leq\overline{\psi}(p)\ \&\ \overline{\gamma}(p)\prec\overline{\delta}(p)\ \Rightarrow\ \overline{\alpha}(p)\leq\overline{\beta}(p)\ \&\ \overline{\xi}(p)\prec\overline{\chi}(p)}$$

if $p$ is positive in $\overline{\phi},\overline{\gamma},\overline{\beta},\overline{\chi}$ and negative in $\overline{\psi},\overline{\delta},\overline{\alpha},\overline{\xi}$;

$$\infer{\overline{\phi}(\top)\leq\overline{\psi}(\top)\ \&\ \overline{\gamma}(\top)\prec\overline{\delta}(\top)\ \Rightarrow\ \overline{\alpha}(\top)\leq\overline{\beta}(\top)\ \&\ \overline{\xi}(\top)\prec\overline{\chi}(\top)}{\overline{\phi}(p)\leq\overline{\psi}(p)\ \&\ \overline{\gamma}(p)\prec\overline{\delta}(p)\ \Rightarrow\ \overline{\alpha}(p)\leq\overline{\beta}(p)\ \&\ \overline{\xi}(p)\prec\overline{\chi}(p)}$$

if $p$ is negative in $\overline{\phi},\overline{\gamma},\overline{\beta},\overline{\chi}$ and positive in $\overline{\psi},\overline{\delta},\overline{\alpha},\overline{\xi}$;

\item Apply the subordination rewritting rule to each inequality with $\prec$ in order to turn it into $\leq$:
$$\infer{{\diamdot}\theta\leq\eta}{\theta\prec\eta}$$
\end{enumerate}

Now we have a quasi-inequality of the form 
$$\overline{\phi}\leq\overline{\psi}\ \Rightarrow\ \overline{\alpha}\leq\overline{\beta},$$
which we split into a meta-conjunction of quasi-inequalities
$$\overline{\phi}\leq\overline{\psi}\ \Rightarrow\ \alpha\leq\beta,$$
where $\alpha\leq\beta$ belong to $\overline{\alpha}\leq\overline{\beta}$.

We denote Preprocess:=$\{\overline{\phi}\leq\overline{\psi}\ \Rightarrow\ \alpha\leq\beta: \alpha\leq\beta\mbox{ belong to }\overline{\alpha}\leq\overline{\beta}\}$.

Now for each quasi-inequality in Preprocess, we apply the following first-approximation rule:

$$\infer{\overline{\phi}\leq\overline{\psi}\ \&\ \nomi_0\leq\alpha\ \&\ \beta\leq\neg\nomi_1 \Rightarrow\ \nomi_0\leq\neg\nomi_1}{\overline{\phi}\leq\overline{\psi}\ \Rightarrow\ \alpha\leq\beta}$$

Now for each quasi-inequality, we focus on the set of its antecedent inequalities $\{\overline{\phi}\leq\overline{\psi}, \nomi_0\leq\alpha, \beta\leq\neg\nomi_1\}$, which we call a \emph{system}.

\item \textbf{The reduction-elimination cycle}:

In this stage, for each $\{\overline{\phi}\leq\overline{\psi}, \nomi_0\leq\alpha, \beta\leq\neg\nomi_1\}$, we apply the following rules together with the splitting rules in the previous stage to eliminate all the propositional variables in the set of inequalities:

\begin{enumerate}

\item Residuation rules:\label{Page:Residuation:Rules}

\begin{prooftree}
\AxiomC{$\Diamond\theta\leq\iota$}
\RightLabel{($\Diamond$-Res)}
\UnaryInfC{$\theta\leq\blacksquare\iota$}
\AxiomC{$\neg\theta\leq\iota$}
\RightLabel{($\neg$-Res-Left)}
\UnaryInfC{$\neg\iota\leq\theta$}
\noLine\BinaryInfC{}
\end{prooftree}

\begin{prooftree}
\AxiomC{$\theta\leq\Box\iota$}
\RightLabel{($\Box$-Res)}
\UnaryInfC{$\Diamondblack\theta\leq\iota$}
\AxiomC{$\theta\leq\neg\iota$}
\RightLabel{($\neg$-Res-Right)}
\UnaryInfC{$\iota\leq\neg\theta$}
\noLine\BinaryInfC{}
\end{prooftree}

\begin{prooftree}
\AxiomC{$\theta\land\iota\leq\eta$}
\RightLabel{($\land$-Res-1)}
\UnaryInfC{$\theta\leq\iota\to\eta$}
\AxiomC{$\theta\leq\iota\lor\eta$}
\RightLabel{($\lor$-Res-1)}
\UnaryInfC{$\theta\land\neg\iota\leq\eta$}
\noLine\BinaryInfC{}
\end{prooftree}

\begin{prooftree}
\AxiomC{$\theta\land\iota\leq\eta$}
\RightLabel{($\land$-Res-2)}
\UnaryInfC{$\iota\leq\theta\to\eta$}
\AxiomC{$\theta\leq\iota\lor\eta$}
\RightLabel{($\lor$-Res-2)}
\UnaryInfC{$\theta\land\neg\eta\leq\iota$}
\noLine\BinaryInfC{}
\end{prooftree}

\begin{prooftree}
\AxiomC{$\theta\leq\iota\to\eta$}
\RightLabel{($\to$-Res-1)}
\UnaryInfC{$\theta\land\iota\leq\eta$}
\AxiomC{$\theta\leq\iota\to\eta$}
\RightLabel{($\to$-Res-2)}
\UnaryInfC{$\iota\leq\theta\to\eta$}
\noLine\BinaryInfC{}
\end{prooftree}

\begin{prooftree}
\AxiomC{${\diamdot}\theta\leq\iota$}
\RightLabel{(${\diamdot}$-Res)}
\UnaryInfC{$\theta\leq{\boxdotb}\iota$}
\AxiomC{$\theta\leq{\boxdot}\iota$}
\RightLabel{(${\boxdot}$-Res)}
\UnaryInfC{${\diamdotb}\theta\leq\iota$}
\noLine\BinaryInfC{}
\end{prooftree}

\item Approximation rules:

$$\infer{\nomj\leq\theta\ \ \ \nomi\leq\Diamond \nomj}{\nomi\leq\Diamond\theta}
\qquad
\infer{\theta\leq  \neg\nomj\ \ \ \Box  \neg\nomj\leq \neg\nomi}{\Box\theta\leq \neg\nomi}
$$
$$
\infer{\nomj\leq\theta\ \ \ \nomi\leq{\diamdot}\nomj}{\nomi\leq{\diamdot}\theta}
\qquad
\infer{\theta\leq  \neg\nomj\ \ \ {\boxdot} \neg\nomj\leq \neg\nomi}{{\boxdot}\theta\leq \neg\nomi}
$$

$$
\infer{\nomj\leq\alpha\ \ \ \ \ \ \ \beta\leq\neg\nomk\ \ \ \ \ \ \ \nomj\rightarrow\neg\nomk\leq\neg\nomi}{\alpha\rightarrow\beta\leq\neg\nomi}
$$

The nominals introduced by the approximation rules must not occur in the system before applying the rule.

\item The Ackermann rules. These two rules are the core of $\mathsf{ALBA}$, since their application eliminates propositional variables. In fact, all the preceding steps are aimed at reaching a shape in which the rules can be applied. Notice that an important feature of these rules is that they are executed on the whole set of inequalities, and not on a single inequality.\\

The right-handed Ackermann rule:

The system 
$\left\{ \begin{array}{ll}
\theta_1\leq p \\
\vdots\\
\theta_n\leq p \\
\eta_1\leq\iota_1\\
\vdots\\
\eta_m\leq\iota_m\\
\end{array} \right.$ 
is replaced by 
$\left\{ \begin{array}{ll}
\eta_1((\theta_1\lor\ldots\lor\theta_n)/p)\leq\iota_1((\theta_1\lor\ldots\lor\theta_n)/p) \\
\vdots\\
\eta_m((\theta_1\lor\ldots\lor\theta_n)/p)\leq\iota_m((\theta_1\lor\ldots\lor\theta_n)/p) \\
\end{array} \right.$
where:
\begin{enumerate}
\item $p$ does not occur in $\theta_1, \ldots, \theta_n$;
\item Each $\eta_i$ is positive, and each $\iota_i$ negative in $p$, for $1\leq i\leq m$.
\end{enumerate}

The left-handed Ackermann rule:

The system
$\left\{ \begin{array}{ll}
p\leq\theta_1 \\
\vdots\\
p\leq\theta_n \\
\eta_1\leq\iota_1\\
\vdots\\
\eta_m\leq\iota_m\\
\end{array} \right.$
is replaced by
$\left\{ \begin{array}{ll}
\eta_1((\theta_1\land\ldots\land\theta_n)/p)\leq\iota_1((\theta_1\land\ldots\land\theta_n)/p) \\
\vdots\\
\eta_m((\theta_1\land\ldots\land\theta_n)/p)\leq\iota_m((\theta_1\land\ldots\land\theta_n)/p) \\
\end{array} \right.$
where:
\begin{enumerate}
\item $p$ does not occur in $\theta_1, \ldots, \theta_n$;
\item Each $\eta_i$ is negative, and each $\iota_i$ positive in $p$, for $1\leq i\leq m$.
\end{enumerate}
\end{enumerate}

\item \textbf{Output}: If in the previous stage, for some set of inequalities, the algorithm gets stuck, i.e.\ some propositional variables cannot be eliminated by the application of the reduction rules, then the algorithm halts and output ``failure''. Otherwise, each initial set of inequalities after the first approximation has been reduced to a set of pure inequalities Reduce$(\overline{\phi}\leq\overline{\psi}, \nomi_0\leq\alpha, \beta\leq\neg\nomi_1)$, and then the output is a set of quasi-inequalities $\{\&$Reduce$(\overline{\phi}\leq\overline{\psi}, \nomi_0\leq\alpha, \beta\leq\neg\nomi_1)\Rightarrow \nomi_0\leq \neg\nomi_1\mid \overline{\phi}\leq\overline{\psi}\ \Rightarrow\ \alpha\leq\beta\in$Preprocess$\}$. Then we can use the standard translation of the set of quasi-inequalities to obtain the first-order correspondence.
\end{enumerate}

\section{Soundness}\label{Sec:Soundness}

In this section we show the soundness of the algorithm with respect to the arbitrary valuations. The soundness proof follows the style of \cite{CoPa12}. For some of the rules, the soundness proofs are the same to existing literature and hence are omitted, so we only give details for the proofs which are different.

\begin{theorem}[Soundness]\label{Thm:Soundness}
If $\mathsf{ALBA}$ runs successfully on an input quasi-inequality $\mathsf{Quasi}$ and outputs a first-order formula $\mathsf{FO(Quasi)}$, then for any Stone space with two relations $(X,\tau,R,R')$, $$X\Vdash_{P}\mathsf{Quasi}\mbox{ iff }X\vDash\mathsf{FO(Quasi)}.$$
\end{theorem}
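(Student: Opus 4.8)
The plan is to prove soundness by a standard rule-by-rule argument, showing that every rule of $\mathsf{ALBA}$ preserves the set of models (under arbitrary valuations) of the system it acts on, and then chaining these equivalences together with the first-approximation step. Concretely, the execution of $\mathsf{ALBA}$ on $\mathsf{Quasi}$ produces a finite sequence of (meta-conjunctions of) quasi-inequalities/systems, starting from $\mathsf{Quasi}$ and ending with a pure one whose standard translation is $\mathsf{FO(Quasi)}$; it suffices to show that each transition in this sequence preserves validity on $X$ with respect to all arbitrary valuations, and then invoke Proposition \ref{Prop:ST:ineq:quasi:mega} to pass from the final pure quasi-inequalities to $\mathsf{FO(Quasi)}$.

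First I would handle the preprocessing stage (stage 1). The distribution rules of Figures \ref{Figure:distribution:rules} and \ref{Figure:distribution:rules:2} are justified by the fact that $\Diamond$, ${\diamdot}$, $\land$ (in the relevant coordinate), $\neg$, and $\to$ (antitone coordinate) distribute over $\lor$, and dually for $-\land$; since in a modal subordination algebra $\Diamond$ is finitely additive and ${\diamdot}$ is interpreted via $R[\cdot]$, which commutes with unions, these hold semantically on $X$. The splitting rules reflect that $V(\theta)\subseteq V(\eta\land\iota)$ iff $V(\theta)\subseteq V(\eta)$ and $V(\theta)\subseteq V(\iota)$ (and dually), and for $\prec$ that $R[V(\theta)]\subseteq V(\eta)\cap V(\iota)$ splits likewise, and $R[V(\theta\lor\eta)]=R[V(\theta)]\cup R[V(\eta)]$. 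The monotone/antitone variable-elimination rules are sound because if $p$ occurs only positively on the left-hand sides and only negatively on the right-hand sides of the antecedent (and conversely in the consequent), then the worst case for making the antecedent true while keeping the consequent false is $V(p)=\varnothing$ (resp.\ $V(p)=X$ via $\top$), a routine monotonicity argument. The subordination rewriting rule $\theta\prec\eta \rightsquigarrow {\diamdot}\theta\leq\eta$ is sound by definition of the semantics, since $R[V(\theta)]\subseteq V(\eta)$ iff $V({\diamdot}\theta)\subseteq V(\eta)$. The first-approximation rule is the usual one: $\overline{\phi}\leq\overline{\psi}\Rightarrow\alpha\leq\beta$ fails on $(X,V)$ iff there is a point in $V(\alpha)\setminus V(\beta)$, which is captured by introducing fresh nominals $\nomi_0,\nomi_1$ interpreted as that point, giving $\nomi_0\leq\alpha$, $\beta\leq\neg\nomi_1$ while $\nomi_0\not\leq\neg\nomi_1$.

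Next I would treat stage 2. The residuation rules are sound because each pair of connectives listed is an adjoint/residuated pair on the powerset algebra of $X$: $(\Diamond,\blacksquare)$ and $(\Diamondblack,\Box)$ are adjoint with respect to $R'$ and $R'^{-1}$, $({\diamdot},{\boxdotb})$ and $({\diamdotb},{\boxdot})$ with respect to $R$ and $R^{-1}$, $\neg$ is self-adjoint (order-reversing), and $\land,\lor,\to$ residuate as in any Boolean algebra; each rule is then the statement that $f(a)\leq b \iff a\leq g(b)$ for the appropriate adjoint pair, which is just unfolding the semantic clauses. The approximation rules are sound by the same mechanism applied ``pointwise'' at a nominal: e.g.\ $\nomi\leq\Diamond\theta$ holds iff the point named by $\nomi$ has an $R'$-successor in $V(\theta)$, iff there is a singleton $V(\nomj)$ with $\nomj\leq\theta$ and $\nomi\leq\Diamond\nomj$, where the freshness condition guarantees $\nomj$ can be chosen freely. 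The Ackermann rules are the crux of the argument and the main obstacle: here I would invoke the (right- and left-handed) Ackermann lemma, whose proof on the complete lattice $\mathcal{P}(X)$ proceeds by noting that under the stated polarity conditions ($\eta_i$ positive, $\iota_i$ negative in $p$ for the right-handed rule), the minimal valuation $V(p):=V(\theta_1)\cup\cdots\cup V(\theta_n)$ satisfies all inequalities $\theta_j\leq p$, and any valuation making all inequalities true must extend this one; monotonicity of $\eta_i$ and antitonicity of $\iota_i$ in $p$ then show that truth of the rewritten system (with this minimal value substituted) is equivalent to solvability of the original system. Since all connectives in our language are monotone or antitone in each coordinate, the Ackermann lemma applies verbatim; the only points needing care are that the $\diamdot,\boxdotb,\diamdotb,\boxdot$ connectives behave correctly, which again follows from their being unary monotone operations on $\mathcal{P}(X)$. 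I expect the bookkeeping around which inequalities count as ``pure'' and the freshness side-conditions on nominals to be the most delicate part, but the conceptual content is entirely the adjunction/Ackermann machinery, which transfers routinely because we are working over the complete and atomic Boolean algebra $\mathcal{P}(X)$ with completely additive/multiplicative operators.

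Finally, chaining: each of the above establishes that $X\Vdash_P(\text{system before rule})$ iff $X\Vdash_P(\text{system after rule})$ (with the convention that validity of a system of systems means validity of each), so by transitivity $X\Vdash_P\mathsf{Quasi}$ iff $X\Vdash_P$ the final meta-conjunction of pure quasi-inequalities, and Proposition \ref{Prop:ST:ineq:quasi:mega} converts the latter into $X\vDash\mathsf{FO(Quasi)}$. As the excerpt notes, the proofs for the rules identical to those in \cite{CoPa12} can be cited rather than reproduced, so the write-up need only spell out the cases genuinely specific to the subordination connectives ${\diamdot},{\boxdot},{\diamdotb},{\boxdotb}$ and the $\prec$-inequalities.
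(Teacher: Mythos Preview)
Your proposal is correct and follows essentially the same approach as the paper's proof: a rule-by-rule soundness argument chaining the preprocessing rules, first-approximation, residuation/approximation rules, and Ackermann rules, with the final pure output translated via Proposition~\ref{Prop:ST:ineq:quasi:mega}. The paper organizes this as a sequence of labelled equivalences (\ref{Crct:Eqn0})--(\ref{Crct:Eqn4}) backed by separate propositions for each stage, and like you it defers the arguments for rules already treated in \cite{CoPa12,Zh21c} while spelling out only the subordination-specific cases (the $\prec$-splitting rules and the subordination rewriting rule).
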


\begin{proof}
The proof goes similarly to \cite[Theorem 8.1]{CoPa12}. Let $$\overline{\phi}\leq\overline{\psi}\ \Rightarrow\ \alpha\triangleleft\beta$$ denote the input quasi-inequality $\mathsf{Quasi}$, let 
$$\overline{\phi}_{i}\leq\overline{\psi}_{i}\ \Rightarrow\ \alpha_{i}\leq\beta_{i}, i\in I$$ denote the quasi-inequalities before the first-approximation rule, let
$$\overline{\phi}_{i}\leq\overline{\psi}_{i}\ \&\ \nomi_{i,0}\leq\alpha_{i}\ \&\ \beta_{i}\leq\neg\nomi_{i,1}\Rightarrow \nomi_{i,0}\leq\neg\nomi_{i,1}, i\in I$$ denote the quasi-inequalities after the first-approximation rule, let
$$\mbox{Reduce}(\overline{\phi}_{i}\leq\overline{\psi}_{i}, \nomi_{i,0}\leq\alpha, \beta\leq\neg\nomi_{i,1}), i\in I$$
denote the sets of inequalities after Stage 2, let 
$$\mathsf{FO(Quasi)}$$ denote the standard translation of the quasi-inequalities into first-order formulas, then it suffices to show the equivalence from (\ref{Crct:Eqn0}) to (\ref{Crct:Eqn4}) given below:

\begin{eqnarray}
&&X\Vdash_{P}\overline{\phi}\leq\overline{\psi}\ \Rightarrow\ \alpha\triangleleft\beta\label{Crct:Eqn0}\\
&&X\Vdash_{P}\overline{\phi}_{i}\leq\overline{\psi}_{i}\ \Rightarrow\ \alpha_{i}\leq\beta_{i},\mbox{ for all }i\in I\label{Crct:Eqn1}\\
&&X\Vdash_{P}\overline{\phi}_{i}\leq\overline{\psi}_{i}\ \&\ \nomi_{i,0}\leq\alpha_{i}\ \&\ \beta_{i}\leq\neg\nomi_{i,1}\Rightarrow \nomi_{i,0}\leq\neg\nomi_{i,1}, i\in I\label{Crct:Eqn2}\\
&&X\Vdash_{P}\mbox{Reduce}(\overline{\phi}_{i}\leq\overline{\psi}_{i}, \nomi_{i,0}\leq\alpha, \beta\leq\neg\nomi_{i,1})\ \Rightarrow\ \nomi_{i,0}\leq\neg\nomi_{i,1}, i\in I\label{Crct:Eqn3}\\
&&X\vDash\mathsf{FO(Quasi)}\label{Crct:Eqn4}
\end{eqnarray}

\begin{itemize}
\item The equivalence between (\ref{Crct:Eqn0}) and (\ref{Crct:Eqn1}) follows from Proposition \ref{prop:Soundness:stage:1};
\item the equivalence between (\ref{Crct:Eqn1}) and (\ref{Crct:Eqn2}) follows from Proposition \ref{prop:Soundness:first:approximation};
\item the equivalence between (\ref{Crct:Eqn2}) and (\ref{Crct:Eqn3}) follows from Propositions \ref{Prop:Stage:2}, \ref{Prop:Ackermann};
\item the equivalence between (\ref{Crct:Eqn3}) and (\ref{Crct:Eqn4}) follows from Proposition \ref{Prop:ST:ineq:quasi:mega}.
\end{itemize}
\end{proof}

In the remainder of this section, we prove the soundness of the rules in Stage 1, 2 and 3.

\begin{proposition}[Soundness of the rules in Stage 1]\label{prop:Soundness:stage:1}
For the distribution rules, the splitting rules, the monotone and antitone variable-elimination rules and the subordination rewritting rule, they are sound in both directions in $X$.
\end{proposition}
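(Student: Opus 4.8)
The plan is to verify each of the four families of rules separately, checking that the set of solutions (i.e.\ the models validating the quasi-inequality, under arbitrary valuations) is preserved in both directions. Since all of these rules act either within a single inequality or on the signed generation trees, the global character of the quasi-inequality plays no essential role: it suffices to show for each rule that every arbitrary model $(X,V)$ validates the inequality (or meta-conjunction) before the rewrite if and only if it validates the one after. I would organise the proof as a case distinction on the rule type, and for each case reduce the claim to a purely semantic identity about truth sets in $B^\circ = P(X)$, which by the folklore duality coincides with the canonical extension $B^\delta$.

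First I would handle the \textbf{distribution rules}. Here the key observation is that $\Diamond^\circ$ is completely join-preserving (Proposition before the diagram) and, dually, that $\Box$ is completely meet-preserving; likewise ${\diamdot}$ is completely join-preserving because it is interpreted as $R[-]$, which commutes with arbitrary unions, and ${\boxdot}$ is completely meet-preserving. Negation is an order-reversing bijection, $\land$ distributes over $\lor$ and vice versa, and $\to$ is join-reversing in its first coordinate and meet-preserving in its second. Consequently each displayed tree-rewrite in Figures \ref{Figure:distribution:rules} and \ref{Figure:distribution:rules:2} corresponds to an equality of truth sets $\llbracket\theta\rrbracket^{X,V} = \llbracket\theta'\rrbracket^{X,V}$ for the subformula being rewritten, hence the inequality containing it is validated before iff after. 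I would state this as a short lemma listing the relevant (anti)distributivity laws and then note that each figure is an instance. The \textbf{splitting rules} are immediate from the fact that $\llbracket\eta\land\iota\rrbracket = \llbracket\eta\rrbracket\cap\llbracket\iota\rrbracket$ and $\llbracket\theta\lor\eta\rrbracket = \llbracket\theta\rrbracket\cup\llbracket\eta\rrbracket$ together with the obvious facts that $A\subseteq B\cap C$ iff $A\subseteq B$ and $A\subseteq C$, and $A\cup B\subseteq C$ iff $A\subseteq C$ and $B\subseteq C$; the same works verbatim with $\prec$ since $R[-]$ preserves unions and $\cap$ is still handled on the right. For the \textbf{subordination rewriting rule}, the equivalence of $\theta\prec\eta$ and ${\diamdot}\theta\leq\eta$ is exactly the semantic clause: $X,V\Vdash\theta\prec\eta$ iff $R[V(\theta)]\subseteq V(\eta)$ iff $\llbracket{\diamdot}\theta\rrbracket^{X,V}\subseteq\llbracket\eta\rrbracket^{X,V}$, since ${\diamdot}$ is interpreted as $R[-]$.

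The one genuinely non-local case, and the main obstacle, is the \textbf{monotone/antitone variable-elimination rule}, which replaces a propositional variable $p$ throughout the whole quasi-inequality by $\bot$ (or $\top$). One direction — if the $\bot$-instance is valid then the original quasi-inequality is valid — is the non-trivial one: one must argue that to falsify the quasi-inequality it is never advantageous to choose a nonempty value for $p$. The argument is the standard monotonicity argument: under the sign hypotheses, each antecedent $\overline{\phi}\leq\overline{\psi}$, $\overline{\gamma}\prec\overline{\delta}$ is, as a condition on $V(p)$, downward closed (decreasing $V(p)$ can only make an antecedent easier to satisfy), while each consequent $\overline{\alpha}\leq\overline{\beta}$, $\overline{\xi}\prec\overline{\chi}$ is such that decreasing $V(p)$ can only make it easier to falsify. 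Concretely, if $(X,V)$ falsifies the original quasi-inequality, it satisfies all antecedents and falsifies some consequent; by the monotonicity/antitonicity determined by the sign conditions, the variant $V^p_\emptyset$ still satisfies all antecedents and still falsifies that consequent, hence falsifies the $\bot$-instance — contrapositive of what we want. The converse direction is trivial since the $\bot$-instance is a substitution instance of the original. I would carry this out by first recording the elementary monotonicity facts (for positive/negative occurrences, $A\subseteq A'$ implies $\llbracket\chi(A)\rrbracket\subseteq\llbracket\chi(A')\rrbracket$ resp.\ $\supseteq$, including across $R[-]$), and then doing the falsification-transfer bookkeeping; the $\top$-case is dual. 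None of this requires the topology, so it is all carried out in $B^\circ$, i.e.\ under arbitrary valuations, which is exactly what the proposition asserts.
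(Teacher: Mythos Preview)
Your proposal is correct and follows essentially the same approach as the paper: distribution and subordination-rewriting rules are justified by the corresponding semantic identities, the splitting rules by elementary set-inclusion facts (the paper spells out the $\prec$ cases via $R[-]$ just as you do), and the variable-elimination rule by the monotonicity argument showing that a falsifying valuation can be pushed to $V^p_\emptyset$. The only cosmetic difference is that the paper argues the nontrivial direction of the elimination rule directly rather than by contrapositive, but the content is identical.
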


\begin{proof}
\begin{enumerate}
\item For the soundness of the distribution rules, it follows from the fact that the corresponding distribution laws are valid in $X$, which can be found in \cite[Proposition 6.2]{Zh21c}.

\item For the soundness of the splitting rules, the rules involving $\leq$ are the same to the same rules in \cite[Proposition 6.2]{Zh21c}. For the rules involving $\prec$, it follows from the following fact:
\begin{center}
\begin{tabular}{c l}
& $X,V\Vdash\theta\prec\eta\land\iota$\\
iff & $R[V(\theta)]\subseteq V(\eta\land\iota)$\\
iff & $R[V(\theta)]\subseteq V(\eta)\cap V(\iota)$\\
iff & $R[V(\theta)]\subseteq V(\eta)$ and $R[V(\theta)]\subseteq V(\iota)$\\
iff & $X,V\Vdash\theta\prec\eta$ and $X,V\Vdash\theta\prec\iota$.\\
\end{tabular}
\end{center}
\begin{center}
\begin{tabular}{c l}
& $X,V\Vdash\theta\lor\eta\prec\iota$\\
iff & $R[V(\theta)\cup V(\eta)]\subseteq V(\iota)$\\
iff & $R[V(\theta)]\cup R[V(\eta)]\subseteq V(\iota)$\\
iff & $R[V(\theta)]\subseteq V(\iota)$ and $R[V(\eta)]\subseteq V(\iota)$\\
iff & $X,V\Vdash\theta\prec\iota$ and $X,V\Vdash\eta\prec\iota.$\\
\end{tabular}
\end{center}
\item For the soundness of the monotone and antitone variable elimination rules, we show the soundness for the first rule. Suppose $p$ is positive in $\overline{\phi},\overline{\gamma},\overline{\beta},\overline{\chi}$ and negative in $\overline{\psi},\overline{\delta},\overline{\alpha},\overline{\xi}$. 

$(\Downarrow)$: If $$X\Vdash_{P}\overline{\phi}(p)\leq\overline{\psi}(p)\ \&\ \overline{\gamma}(p)\prec\overline{\delta}(p)\ \Rightarrow\ \overline{\alpha}(p)\leq\overline{\beta}(p)\ \&\ \overline{\xi}(p)\prec\overline{\chi}(p),$$ then for all valuations $V$, we have $$X,V\Vdash\overline{\phi}(p)\leq\overline{\psi}(p)\ \&\ \overline{\gamma}(p)\prec\overline{\delta}(p)\ \Rightarrow\ \overline{\alpha}(p)\leq\overline{\beta}(p)\ \&\ \overline{\xi}(p)\prec\overline{\chi}(p),$$ then for the valuation $V^{p}_{\emptyset}$ such that $V^{p}_{\emptyset}$ is the same as $V$ except that $V^{p}_{\emptyset}(p)=\emptyset$, 
$$X,V^{p}_{\emptyset}\Vdash\overline{\phi}(p)\leq\overline{\psi}(p)\ \&\ \overline{\gamma}(p)\prec\overline{\delta}(p)\ \Rightarrow\ \overline{\alpha}(p)\leq\overline{\beta}(p)\ \&\ \overline{\xi}(p)\prec\overline{\chi}(p),$$ therefore $$X, V\Vdash\overline{\phi}(\bot)\leq\overline{\psi}(\bot)\ \&\ \overline{\gamma}(\bot)\prec\overline{\delta}(\bot)\ \Rightarrow\ \overline{\alpha}(\bot)\leq\overline{\beta}(\bot)\ \&\ \overline{\xi}(\bot)\prec\overline{\chi}(\bot),$$ so $$X\Vdash_{P}\overline{\phi}(\bot)\leq\overline{\psi}(\bot)\ \&\ \overline{\gamma}(\bot)\prec\overline{\delta}(\bot)\ \Rightarrow\ \overline{\alpha}(\bot)\leq\overline{\beta}(\bot)\ \&\ \overline{\xi}(\bot)\prec\overline{\chi}(\bot).$$ 

$(\Uparrow)$: For the other direction, consider any valuation $V$ on $X$, by the fact that $p$ is positive in $\overline{\phi},\overline{\gamma},\overline{\beta},\overline{\chi}$ and negative in $\overline{\psi},\overline{\delta},\overline{\alpha},\overline{\xi}$, we have that 
$$X,V\vDash\overline{\psi}(p)\leq\overline{\psi}(\bot)$$
$$X,V\vDash\overline{\delta}(p)\leq\overline{\delta}(\bot)$$
$$X,V\vDash\overline{\alpha}(p)\leq\overline{\alpha}(\bot)$$ 
$$X,V\vDash\overline{\xi}(p)\leq\overline{\xi}(\bot),$$
and $$X,V\vDash\overline{\phi}(\bot)\leq\overline{\phi}(p)$$
$$X,V\vDash\overline{\gamma}(\bot)\leq\overline{\gamma}(p)$$
$$X,V\vDash\overline{\beta}(\bot)\leq\overline{\beta}(p)$$
$$X,V\vDash\overline{\chi}(\bot)\leq\overline{\chi}(p).$$
Suppose that $$X,V\Vdash\overline{\phi}(p)\leq\overline{\psi}(p)\ \&\ \overline{\gamma}(p)\prec\overline{\delta}(p),$$ then
$$X,V\Vdash\overline{\phi}(\bot)\leq\overline{\phi}(p)\leq\overline{\psi}(p)\leq\overline{\psi}(\bot)\ \&\ \overline{\gamma}(\bot)\leq\overline{\gamma}(p)\prec\overline{\delta}(p)\leq\overline{\delta}(\bot),$$
so $$X,V\Vdash\overline{\phi}(\bot)\leq\overline{\psi}(\bot)\ \&\ \overline{\gamma}(\bot)\prec\overline{\delta}(\bot),$$
therefore by assumption that 
$$X\Vdash_{P}\overline{\phi}(\bot)\leq\overline{\psi}(\bot)\ \&\ \overline{\gamma}(\bot)\prec\overline{\delta}(\bot)\ \Rightarrow\ \overline{\alpha}(\bot)\leq\overline{\beta}(\bot)\ \&\ \overline{\xi}(\bot)\prec\overline{\chi}(\bot),$$
we have 
$$X,V\Vdash\overline{\alpha}(\bot)\leq\overline{\beta}(\bot)\ \&\ \overline{\xi}(\bot)\prec\overline{\chi}(\bot),$$
so $$X,V\Vdash\overline{\alpha}(p)\leq\overline{\alpha}(\bot)\leq\overline{\beta}(\bot)\leq\overline{\beta}(p)\ \&\ \overline{\xi}(p)\leq\overline{\xi}(\bot)\prec\overline{\chi}(\bot)\leq\overline{\chi}(p),$$
then $$X,V\Vdash\overline{\alpha}(p)\leq\overline{\beta}(p)\ \&\ \overline{\xi}(p)\prec\overline{\chi}(p),$$
so we get that for all $V$, 
$$X,V\Vdash\overline{\phi}(p)\leq\overline{\psi}(p)\ \&\ \overline{\gamma}(p)\prec\overline{\delta}(p)\ \Rightarrow\ \overline{\alpha}(p)\leq\overline{\beta}(p)\ \&\ \overline{\xi}(p)\prec\overline{\chi}(p),$$
so 
$$X\Vdash_{P}\overline{\phi}(p)\leq\overline{\psi}(p)\ \&\ \overline{\gamma}(p)\prec\overline{\delta}(p)\ \Rightarrow\ \overline{\alpha}(p)\leq\overline{\beta}(p)\ \&\ \overline{\xi}(p)\prec\overline{\chi}(p).$$
The soundness of the other variable elimination rule is similar.

\item For the subordination rewritting rule, its soundness follows from the following fact:
\begin{center}
\begin{tabular}{c l}
& $X,V\Vdash\theta\prec\eta$\\
iff & $R[V(\theta)]\subseteq V(\eta)$\\
iff & $V({\diamdot}\theta)\subseteq V(\eta)$\\
iff & $X,V\Vdash{\diamdot}\theta\leq\eta$.\\
\end{tabular}
\end{center}
\end{enumerate}
\end{proof}
\begin{proposition}\label{prop:Soundness:first:approximation}
(\ref{Crct:Eqn1}) and (\ref{Crct:Eqn2}) are equivalent, i.e.\ the first-approximation rule is sound in $\mathbb{F}$.
\end{proposition}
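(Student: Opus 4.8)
The plan is to reduce the claim, as usual, to a statement about a single quasi-inequality: the equivalence of (\ref{Crct:Eqn1}) and (\ref{Crct:Eqn2}) holds iff for each $i\in I$ separately we have $X\Vdash_{P}\overline{\phi}_i\leq\overline{\psi}_i\Rightarrow\alpha_i\leq\beta_i$ iff $X\Vdash_{P}\overline{\phi}_i\leq\overline{\psi}_i\ \&\ \nomi_{i,0}\leq\alpha_i\ \&\ \beta_i\leq\neg\nomi_{i,1}\Rightarrow\nomi_{i,0}\leq\neg\nomi_{i,1}$, which (dropping the index $i$) is exactly the soundness of the first-approximation rule applied to $\overline{\phi}\leq\overline{\psi}\Rightarrow\alpha\leq\beta$. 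Throughout, I fix the Stone space with two relations $X$, recall that $X\Vdash_{P}\Gamma$ abbreviates ``$X,V\Vdash\Gamma$ for every arbitrary valuation $V$ on $X$'', and use that the nominals $\nomi_0,\nomi_1$ introduced by the rule are fresh, i.e.\ occur neither in $\overline{\phi},\overline{\psi}$ nor in $\alpha,\beta$.

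For the direction from the premise to the conclusion, I would assume $X\Vdash_{P}\overline{\phi}\leq\overline{\psi}\Rightarrow\alpha\leq\beta$ and take any arbitrary valuation $V$ with $X,V\Vdash\overline{\phi}\leq\overline{\psi}$, $X,V\Vdash\nomi_0\leq\alpha$ and $X,V\Vdash\beta\leq\neg\nomi_1$. Writing $V(\nomi_0)=\{u\}$ and $V(\nomi_1)=\{v\}$, the hypothesis gives $V(\alpha)\subseteq V(\beta)$; then $\nomi_0\leq\alpha$ yields $u\in V(\alpha)\subseteq V(\beta)$, while $\beta\leq\neg\nomi_1$ yields $V(\beta)\subseteq X\setminus\{v\}$, so $u\neq v$, i.e.\ $X,V\Vdash\nomi_0\leq\neg\nomi_1$. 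This is the conclusion quasi-inequality.

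For the converse, assuming the conclusion quasi-inequality, I would take any $V$ with $X,V\Vdash\overline{\phi}\leq\overline{\psi}$ and argue by contradiction that $V(\alpha)\subseteq V(\beta)$: if some $u\in V(\alpha)\setminus V(\beta)$, define the variant $V'$ agreeing with $V$ everywhere except $V'(\nomi_0)=V'(\nomi_1)=\{u\}$. Since $\nomi_0,\nomi_1$ are fresh, $X,V'\Vdash\overline{\phi}\leq\overline{\psi}$ still holds, and by construction $X,V'\Vdash\nomi_0\leq\alpha$ and $X,V'\Vdash\beta\leq\neg\nomi_1$; applying the assumed quasi-inequality gives $X,V'\Vdash\nomi_0\leq\neg\nomi_1$, i.e.\ $\{u\}\subseteq X\setminus\{u\}$, a contradiction. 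Hence $X,V\Vdash\alpha\leq\beta$, and since $V$ was arbitrary, the premise quasi-inequality holds.

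I expect no real obstacle here; the argument is routine. The only point that needs care is the handling of the nominals: one must explicitly invoke their freshness so that passing to $V'$ does not disturb satisfaction of $\overline{\phi}\leq\overline{\psi}$ (nor the truth sets $V(\alpha),V(\beta)$), and one must use the standing convention that arbitrary valuations may interpret nominals as \emph{any} singleton — in particular as $\{u\}$ — which is precisely what legitimizes the choice of $V'$.
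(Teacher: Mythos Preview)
Your proposal is correct and follows essentially the same approach as the paper's own proof: both directions are handled by the standard first-approximation argument, using freshness of $\nomi_0,\nomi_1$ and a nominal variant sending both to the witnessing point to derive a contradiction. The paper's presentation is nearly identical, differing only in notation (it writes the variant as $V^{\nomi_0,\nomi_1}_{w,w}$) and in phrasing the forward direction pointwise rather than via set inclusions.
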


\begin{proof}
(\ref{Crct:Eqn1}) $\Rightarrow$ (\ref{Crct:Eqn2}): Suppose $X\Vdash_{P}\overline{\phi}\leq\overline{\psi}\ \Rightarrow\ \alpha\leq\beta$. Then for any valuation $V$, if $V(\overline{\phi})\subseteq V(\overline{\psi})$, $X,V\Vdash\nomi_{0}\leq\alpha$ and $X,V\Vdash\beta\leq\neg\nomi_{1}$, then $X,V,V(\nomi_{0})\Vdash\alpha$ and $X,V,V(\nomi_{1})\nVdash\beta$, so by $X\Vdash_{P}\overline{\phi}\leq\overline{\psi}\ \Rightarrow\ \alpha\leq\beta$ we have $X,V,V(\nomi_{0})\Vdash\beta$, so $V(\nomi_{0})\neq V(\nomi_{1})$, so $X,V\Vdash\nomi_{0}\leq\neg\nomi_{1}$.

(\ref{Crct:Eqn2}) $\Rightarrow$ (\ref{Crct:Eqn1}): Suppose $X\Vdash_{P}\overline{\phi}\leq\overline{\psi}\ \&\ \nomi_0\leq\alpha\ \&\ \beta\leq\neg\nomi_1 \Rightarrow\ \nomi_0\leq\neg\nomi_1$. If $X\nVdash_{P}\overline{\phi}\leq\overline{\psi}\ \Rightarrow\ \alpha\leq\beta$, there is a valuation $V$ and a $w\in X$ such that $V(\overline{\phi})\subseteq V(\overline{\psi})$, $X,V,w\Vdash\alpha$ and $X,V,w\nVdash\beta$. Then by taking $V^{\nomi_{0}, \nomi_{1}}_{w, w}$ to be the valuation which is the same as $V$ except that $V^{\nomi_{0}, \nomi_{1}}_{w, w}(\nomi_{0})=V^{\nomi_{0}, \nomi_{1}}_{w, w}(\nomi_{1})=\{w\}$, since $\nomi_{0}, \nomi_{1}$ do not occur in $\alpha$ and $\beta$, we have that $X,V^{\nomi_{0},\nomi_{1}}_{w,w},w\Vdash\alpha$ and $X,V^{\nomi_{0}\nomi_{1}}_{w,w},w\nVdash\beta$, therefore $X,V^{\nomi_{0},\nomi_{1}}_{w,w}\Vdash\nomi_{0}\leq\alpha$ and $X,V^{\nomi_{0},\nomi_{1}}_{w,w}\Vdash\beta\leq\neg\nomi_{1}$. It is easy to see that $V^{\nomi_{0},\nomi_{1}}_{w,w}(\overline{\phi})\subseteq V^{\nomi_{0},\nomi_{1}}_{w,w}(\overline{\psi})$. By $X\Vdash_{P}\overline{\phi}\leq\overline{\psi}\ \&\ \nomi_0\leq\alpha\ \&\ \beta\leq\neg\nomi_1 \Rightarrow\ \nomi_0\leq\neg\nomi_1$, we have that $X, V^{\nomi_{0}, \nomi_{1}}_{w, w}\Vdash\nomi_{0}\leq\neg\nomi_{1}$, so $X, V^{\nomi_{0}, \nomi_{1}}_{w, w},w\Vdash\nomi_{0}$ implies that $X, V^{\nomi_{0}, \nomi_{1}}_{w, w},w\Vdash\neg\nomi_{1}$, a contradiction. So $X\Vdash_{P}\overline{\phi}\leq\overline{\psi}\ \Rightarrow\ \alpha\leq\beta$.
\end{proof}

The next step is to show the soundness of each rule of Stage 2. For each rule, before the application of this rule we have a set of inequalities $S$ (which we call the \emph{system}), after applying the rule we get a set of inequalities $S'$, the soundness of Stage 2 is then the equivalence of the following two conditions:
\begin{itemize}
\item $X\Vdash_{P}\bigamp S\ \Rightarrow \nomi_0\leq\neg\nomi_1$;
\item $X\Vdash_{P}\bigamp S'\ \Rightarrow \nomi_0\leq\neg\nomi_1$;
\end{itemize}

where $\bigamp S$ denote the meta-conjunction of inequalities of $S$. It suffices to show the following property:

\begin{itemize}\label{condition:1:4:equivalence}
\item For any $X$, any valuation $V$, if $X,V\Vdash S$, then there is a valuation $V'$ such that $V'(\nomi_0)=V(\nomi_0)$, $V'(\nomi_1)=V(\nomi_1)$ and $X,V'\Vdash S'$;
\item For any $X$, any valuation $V'$, if $X,V'\Vdash S'$, then there is a valuation $V$ such that $V(\nomi_0)=V'(\nomi_0)$, $V(\nomi_1)=V'(\nomi_1)$ and $X,V\Vdash S$.
\end{itemize}

\begin{proposition}\label{Prop:Stage:2}
The splitting rules, the approximation rules and the residuation rules in Stage 2 are sound in both directions in $X$.
\end{proposition}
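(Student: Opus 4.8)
The plan is to verify, rule by rule, the pointwise criterion stated just before the proposition (page~\pageref{condition:1:4:equivalence}): for a rule rewriting a system $S$ into $S'$ it suffices to produce, from any $X$ and any valuation $V$ with $X,V\Vdash S$, a valuation $V'$ agreeing with $V$ on $\nomi_0,\nomi_1$ and satisfying $X,V'\Vdash S'$, and conversely. I would split the argument into the three families: splitting rules, residuation rules, approximation rules.

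First, the splitting rules and the residuation rules introduce no new nominals, so in both directions one simply takes $V'=V$ and the whole claim collapses to a semantic equivalence of single inequalities. For the splitting rules this is the distributivity of $\subseteq$ over $\cap$ on the right and over $\cup$ on the left, exactly as in the Stage~1 case already treated in Proposition~\ref{prop:Soundness:stage:1}. For the residuation rules one unwinds the satisfaction clauses into set-level adjunctions: using $V(\Diamond\theta)=R'^{-1}[V(\theta)]$ and $V(\blacksquare\iota)=\{w:\text{every }R'\text{-predecessor of }w\text{ lies in }V(\iota)\}$ one gets $V(\Diamond\theta)\subseteq V(\iota)$ iff $V(\theta)\subseteq V(\blacksquare\iota)$, and symmetrically for the pair $\Diamondblack/\Box$; the rules for $\neg$ and the $\wedge$/$\to$/$\lor$ residuations are the usual Boolean ones and are identical to \cite{CoPa12,Zh21c}. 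The only residuation rules not covered there are $(\diamdot\text{-Res})$ and $(\boxdot\text{-Res})$; here I would spell out $V({\diamdot}\theta)=R[V(\theta)]$ and $V({\boxdotb}\iota)=\{w: R[\{w\}]\subseteq V(\iota)\}$ (note the reversed direction of $R$ in the clauses for ${\diamdot},{\boxdot}$) and check ${\diamdot}\dashv{\boxdotb}$ and ${\diamdotb}\dashv{\boxdot}$ verbatim as for $\Diamond/\blacksquare$.

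Second, for each approximation rule I would exhibit the witnessing valuation explicitly. Take the rule deriving $\nomj\leq\theta$ and $\nomi\leq\Diamond\nomj$ from $\nomi\leq\Diamond\theta$. Given $V\Vdash S$, let $w$ be the point with $V(\nomi)=\{w\}$; since $w\in V(\Diamond\theta)$, choose an $R'$-successor $v$ of $w$ with $v\in V(\theta)$, and let $V'$ agree with $V$ except that $V'(\nomj)=\{v\}$. By the side condition $\nomj$ occurs nowhere in $S$, so $V'$ still satisfies $S$, and it satisfies the two new inequalities by construction; since $\nomj\neq\nomi_0,\nomi_1$, it agrees with $V$ there. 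Conversely, from $V'\Vdash S'$ one forgets $\nomj$: chaining $\nomi\leq\Diamond\nomj$ with $\nomj\leq\theta$ gives $\nomi\leq\Diamond\theta$, all other inequalities being untouched. The ${\diamdot}$-approximation rule is the same with $R$ in place of $R'$, and the $\Box$- and ${\boxdot}$-approximation rules are dual: run the argument at the point $w$ with $V(\nomi)=\{w\}$, where the failure of $\Box\theta\leq\neg\nomi$ (resp.\ ${\boxdot}\theta\leq\neg\nomi$) exhibits an $R'$- (resp.\ $R$-) related point lying outside $V(\theta)$, which one names by the fresh nominal. For the $\to$-approximation rule deriving $\nomj\leq\alpha$, $\beta\leq\neg\nomk$, $\nomj\to\neg\nomk\leq\neg\nomi$ from $\alpha\to\beta\leq\neg\nomi$: the hypothesis says the point $w$ with $V(\nomi)=\{w\}$ fails $\alpha\to\beta$, i.e.\ $w\in V(\alpha)$ and $w\notin V(\beta)$; set $V'(\nomj)=V'(\nomk)=\{w\}$, note $V'(\nomj\to\neg\nomk)=X\setminus\{w\}=V'(\neg\nomi)$, and all three premises follow; conversely $w=V'(\nomi)$ must lie in both $V'(\nomj)$ and $V'(\nomk)$, forcing these singletons to equal $\{w\}$, which reconstructs the conclusion.

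The only step requiring care beyond routine unwinding is the forward direction of the approximation rules, and there the one subtle point is that the modified valuation $V'$ still verifies the \emph{entire} remaining system $S$ and still agrees on $\nomi_0,\nomi_1$ — both guaranteed precisely by the freshness proviso on the introduced nominals. I would therefore state that proviso explicitly at the outset of that part and invoke it for each rule. Everything else is a direct computation from the satisfaction clauses, and for the $R$-based connectives it is a verbatim copy of the corresponding $R'$-based computation, so those can be safely abbreviated.
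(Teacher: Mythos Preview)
Your proposal is correct and follows exactly the standard semantic verification that the paper defers to via its citation of \cite{Zh21c}; you have simply spelled out what that citation contains. One minor wording slip: in the $\Box$- and ${\boxdot}$-approximation cases you write ``the failure of $\Box\theta\leq\neg\nomi$'' when you mean that this inequality \emph{holds}, forcing $w\notin V(\Box\theta)$ and hence exhibiting the required related point outside $V(\theta)$; the underlying argument is right.
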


\begin{proof}
The soundness proofs are the same to the soundness of the same rules in \cite[Proposition 6.4-Lemma 6.8, Lemma 6.11-6.12]{Zh21c}.
\end{proof}

\begin{proposition}\label{Prop:Ackermann}
The Ackermann rules are sound in $X$.
\end{proposition}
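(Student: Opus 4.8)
The plan is to verify, for each of the two Ackermann rules, the two-directional property recalled just before the statement (page~\pageref{condition:1:4:equivalence}) that secures soundness of a Stage~2 rule: writing $S$ for the system in the premise of the rule and $S'$ for the system in the conclusion, one must show that (i) every valuation $V$ with $X,V\Vdash S$ can be replaced by a valuation $V'$, agreeing with $V$ on $\nomi_0$ and $\nomi_1$, such that $X,V'\Vdash S'$, and (ii) conversely, every $V'$ with $X,V'\Vdash S'$ can be replaced by such a $V$ with $X,V\Vdash S$. I will carry out the argument for the right-handed rule; the left-handed one is entirely order-dual.

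As preparation I would invoke two standard facts, each proved by a routine induction on the structure of formulas (exactly as in \cite{CoPa12}): a \emph{monotonicity lemma}, stating that if a variable $p$ occurs only positively (resp.\ only negatively) in a formula $\chi$, then $A\mapsto V^{p}_{A}(\chi)$ is $\subseteq$-monotone (resp.\ $\subseteq$-antitone) --- this is precisely what the side conditions on the $\eta_i$ and $\iota_i$ provide; and a \emph{substitution lemma}, stating that $V(\chi(\delta/p))=V^{p}_{V(\delta)}(\chi)$ for every formula $\chi$. I also adopt the convention that an empty disjunction denotes $\bot$ and an empty conjunction $\top$, so that the $n=0$ case of the rules is subsumed (with minimal valuation $\emptyset$, resp.\ $X$).

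For the right-handed rule put $\zeta:=\theta_1\lor\cdots\lor\theta_n$ and recall that $p$ does not occur in $\zeta$. For direction (i) take $V':=V$: from $V(\theta_i)\subseteq V(p)$ for all $i$ we get $V(\zeta)\subseteq V(p)$, so the substitution lemma together with the monotonicity lemma (applied to each $\eta_i$, positive in $p$) gives $V(\eta_i(\zeta/p))=V^{p}_{V(\zeta)}(\eta_i)\subseteq V(\eta_i)$, and dually (each $\iota_i$ negative in $p$) $V(\iota_i)\subseteq V^{p}_{V(\zeta)}(\iota_i)=V(\iota_i(\zeta/p))$; chaining these with $V(\eta_i)\subseteq V(\iota_i)$ yields $X,V\Vdash S'$, while $V'=V$ trivially agrees with $V$ on the nominals. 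For direction (ii), given $V'$ with $X,V'\Vdash S'$, set $V:=(V')^{p}_{V'(\zeta)}$, the ``minimal valuation''. Since $p$ does not occur in $\zeta$ nor in any $\theta_i$, we have $V(\theta_i)=V'(\theta_i)\subseteq V'(\zeta)=V(p)$, so the first $n$ inequalities of $S$ hold under $V$; and by the substitution lemma $V(\eta_i)=V'(\eta_i(\zeta/p))\subseteq V'(\iota_i(\zeta/p))=V(\iota_i)$, so the remaining inequalities of $S$ hold as well. Finally $V$ and $V'$ differ only at $p$, a propositional variable distinct from the nominals $\nomi_0$ and $\nomi_1$, so they agree there. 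The left-handed rule is treated identically, with $\zeta:=\theta_1\land\cdots\land\theta_n$, the roles of ``positive'' and ``negative'' interchanged, and $V(p)$ again taken to be $V'(\zeta)$.

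I do not expect a genuine obstacle. The only point worth a remark --- more conceptual than technical --- is that in direction (ii) the set $V'(\zeta)$ is installed as the interpretation of $p$, which is a legitimate move precisely because Proposition~\ref{Prop:Ackermann} concerns \emph{arbitrary} valuations; this is exactly the step that fails for admissible valuations, and is why the canonicity of restricted inductive quasi-inequalities (Section~\ref{Sec:Canonicity}) must be established by a separate argument. Everything else --- the two inductions underlying the monotonicity and substitution lemmas, and the bookkeeping about which variables are affected by the valuation changes --- is routine and follows the pattern of \cite{CoPa12}.
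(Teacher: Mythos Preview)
Your proposal is correct and follows essentially the same approach as the paper: both reduce soundness to the two-directional property on page~\pageref{condition:1:4:equivalence} and prove it via the ``minimal valuation'' trick, taking the interpretation of $p$ to be $V'(\zeta)$ in the direction from $S'$ to $S$, and using monotonicity for the other direction. The paper states this as a separate Ackermann lemma (with $n=1$ WLOG) and dismisses the monotonicity direction as ``obvious'', whereas you spell out both the monotonicity and substitution lemmas and treat general $n$; these are differences in level of detail, not in strategy.
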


\begin{proof}
The proof is similar to the soundness of the Ackermann rules in \cite[Lemma 4.2, 4.3, 8.4]{CoPa12}. We only prove it for the right-handed Ackermann rule, the left-handed Ackermann rule is similar. Without loss of generality we assume that $n=1$. By the discussion on page \pageref{condition:1:4:equivalence}, it suffices to show the following \emph{right-handed Ackermann lemma}:
\begin{lemma}[Right-Handed Ackermann Lemma]\label{Lemma:Right:Ackermann}
Let $\theta$ be a formula which does not contain $p$, let $\eta_i(p)$ (resp.\ $\iota_i(p)$) be positive (resp.\ negative) in $p$ for $1\leq i\leq m$, and let $\vec q$ (resp.\ $\vec \nomj$) be all the propositional variables (resp.\ nominals) occurring in $\eta_1(p), \ldots, \eta_m(p)$, $\iota_1(p), \ldots, \iota_m(p), \theta$ other than $p$. Then for all $\vec a\in P(X), \vec x\in X$, the following are equivalent:
\begin{enumerate}
\item
$V(\eta_i(\theta/p))\subseteq V(\iota_i(\theta/p))$ for $1\leq i\leq m$,  where $V(\vec q)=\vec a$, $V(\vec \nomj)=\vec{\{x\}}$,
\item
there exists $a_0\in P(X)$ such that $V'(\theta)\leq a_0$ and $V'(\eta_i(p))\subseteq V'(\iota_i(p))$ for $1\leq i\leq m$,  where $V'$ is the same as $V$ except that $V'(p)=a_0$.
\end{enumerate}
\end{lemma}

$\Rightarrow$: Take $V'$ such that $V'$ is the same as $V$ except that $V'(p)=V(\theta)$. Since $\theta$ does not contain $p$, it is easy to see that $V(\theta)=V'(\theta)$. Therefore we can take $a_0:=V(\theta)$ and get 2.

$\Leftarrow$: It is obvious by monotonicity.
\end{proof}

\section{Success}\label{Sec:Success}
In this section, we show the success of the algorithm on inductive quasi-inequalities. Notice that here we do not allow the input quasi-inequality to contain any black connectives $\Diamondblack,\blacksquare,{\boxdotb},{\diamdotb}$, but we allow ${\diamdot}$ and ${\boxdot}$ to occur in the input quasi-inequality.

\begin{definition}[Definite $(\Omega,\epsilon)$-inductive signed generation tree]
Given a dependence order $<_{\Omega}$, an order type $\epsilon$, $*\in\{-,+\}$, the signed generation tree $*\theta$ of the formula $\theta(p_1,\ldots, p_n)$ is \emph{definite $(\Omega,\epsilon)$-inductive} if there is no $+\lor,-\land$ occurring in the Skeleton part on an $\epsilon$-critical branch.
\end{definition}

\begin{lemma}\label{Lemma:Definite:inductive}
Given an input $(\Omega,\epsilon)$-inductive quasi-inequality $$\overline{\phi}\leq\overline{\psi}\ \&\ \overline{\gamma}\prec\overline{\delta}\ \Rightarrow\ \alpha\triangleleft\beta \mbox{ (where } \triangleleft\in\{\leq,\prec\}),$$
after the first stage, it is transformed into a quasi-inequality of the form $$\overline{\phi}\leq\overline{\psi}\ \&\ \overline{{\diamdot}\gamma}\leq\overline{\delta}\ \Rightarrow\ \overline{\alpha}\leq\overline{\beta}\ \&\ \overline{{\diamdot}\xi}\leq\overline{\chi},$$
and further of the form 
$$\overline{\phi}\leq\overline{\psi}\ \&\ \overline{{\diamdot}\gamma}\leq\overline{\delta}\ \Rightarrow\ \alpha\leq\beta$$
or
$$\overline{\phi}\leq\overline{\psi}\ \&\ \overline{{\diamdot}\gamma}\leq\overline{\delta}\ \Rightarrow\ {\diamdot}\xi\leq\chi,$$
where 
\begin{itemize}
\item each $\phi\leq\psi$ or ${\diamdot}\gamma\leq\delta$ is either $(\Omega, \epsilon)$-receiving or $(\Omega, \epsilon)$-solvable;
\item $+\alpha$ and $-\beta$ (resp.\ $+{\diamdot}\xi$ and $-\chi$) are definite $(\Omega, \epsilon)$-inductive signed generation trees;
\item each formula contains no black connectives.
\end{itemize}
\end{lemma}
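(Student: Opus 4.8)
The plan is to run through Stage 1 of $\mathsf{ALBA}$ step by step — the distribution rules (a), the splitting rules (b), the monotone/antitone variable-elimination rules (c), the subordination rewriting rule (d), and finally the split of the consequent into individual quasi-inequalities — and to verify that four invariants hold of the output: (i) the displayed shape; (ii) each antecedent inequality is $(\Omega,\epsilon)$-receiving or $(\Omega,\epsilon)$-solvable; (iii) $+\alpha$ and $-\beta$ (resp.\ $+{\diamdot}\xi$ and $-\chi$) are \emph{definite} $(\Omega,\epsilon)$-inductive; and (iv) no black connective occurs. Items (i) and (iv) are immediate: no rule of Stage 1 introduces a black connective — the distribution and splitting rules only permute connectives already present, rule (c) deletes subformulas, and rule (d) introduces only the white ${\diamdot}$ — and since (a)–(c) never turn a $\leq$ into a $\prec$ or conversely, the antecedent $\overline\phi\leq\overline\psi\ \&\ \overline\gamma\prec\overline\delta$ becomes $\overline\phi\leq\overline\psi\ \&\ \overline{{\diamdot}\gamma}\leq\overline\delta$ after rule (d), while the single consequent $\alpha\triangleleft\beta$ turns into a meta-conjunction of $\leq$-inequalities if $\triangleleft$ is $\leq$, and into a meta-conjunction of inequalities each prefixed by ${\diamdot}$ under rule (d) if $\triangleleft$ is $\prec$; splitting off one conjunct at a time gives the two displayed forms.

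For (iii) I would start from the hypothesis that $+\alpha$ and $-\beta$ are $(\Omega,\epsilon)$-inductive, so every $\epsilon$-critical branch is good, i.e.\ a PIA segment followed by a Skeleton segment, and the only obstruction to definiteness is a $+\lor$ or $-\land$ node lying in the Skeleton segment of an $\epsilon$-critical branch. The key point is that the distribution rules of Figures \ref{Figure:distribution:rules} and \ref{Figure:distribution:rules:2} push exactly the SLR Skeleton operators below such a node — $+\Diamond,+{\diamdot},+\land,-\to$ straight through, and $-\neg$ (resp.\ $+\neg$) with the sign-flip that swaps $+\lor$ and $-\land$ — so that, after exhaustive application, no Skeleton operator remains above a critical $+\lor$ or $-\land$. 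Such a node then either sits at the root of its signed generation tree (where the splitting rules $\phi_1\lor\phi_2\leq\psi\vdash\phi_1\leq\psi\ \&\ \phi_2\leq\psi$, the dual on the right-hand side, and their $\prec$-analogues delete it), or it has only PIA nodes above it and so lies in the PIA segment, which definiteness permits. I would then check that these rewritings preserve $(\Omega,\epsilon)$-inductiveness: the distribution rules leave the multiset of leaves and the dependence order untouched and relocate only the non-critical side of a binary node, while splitting passes to subtrees; and the outermost $+{\diamdot}$ prepended in the $\prec$-case is an SLR Skeleton node with nothing above it, hence harmless.

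For (ii), the $\leq$-case is the standard inequality-style verification of \cite{CoPa12,Zh21c,CPZ:Trans}: splitting a conjunction on the $\epsilon^\partial$-uniform side of a receiving or solvable inequality keeps both halves $\epsilon^\partial$-uniform; splitting on the other side keeps each half $(\Omega,\epsilon)$-inductive with PIA $\epsilon$-critical branches and inherits the dependence bound; distribution relocates only $\epsilon^\partial$-uniform or strictly-lower subtrees. Rules (a)–(c) applied to a $\prec$-inequality $\gamma\prec\delta$ likewise leave each resulting $\gamma'\prec\delta'$ receiving or solvable, the definitions being phrased through $-\gamma'$ and $+\delta'$ just as for $\leq$. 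The new ingredient is the step from $\gamma'\prec\delta'$ to ${\diamdot}\gamma'\leq\delta'$ under rule (d): here I would use that $-{\diamdot}$ is an SRA — hence PIA — node (Table \ref{aJoin:and:Meet:Friendly:Table}) and that $-{\diamdot}\gamma'$ is $\epsilon^\partial$-uniform iff $-\gamma'$ is. If $\gamma'\prec\delta'$ is receiving, both $-{\diamdot}\gamma'$ and $+\delta'$ are $\epsilon^\partial$-uniform, so ${\diamdot}\gamma'\leq\delta'$ is receiving; if it is solvable with $-\gamma'$ the $\epsilon^\partial$-uniform side, then so is $-{\diamdot}\gamma'$, and ${\diamdot}\gamma'\leq\delta'$ is solvable; and if it is solvable with $+\delta'$ the $\epsilon^\partial$-uniform side, then prepending the PIA node $-{\diamdot}$ to $-\gamma'$ keeps it $(\Omega,\epsilon)$-inductive with all $\epsilon$-critical branches PIA and leaves the dependence bound intact, so ${\diamdot}\gamma'\leq\delta'$ is again solvable.

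I expect the main obstacle to be item (iii): proving rigorously that exhaustive distribution terminates and drives every $\epsilon$-critical Skeleton $\lor$/$\land$ either to the root or into the PIA segment. This is where the precise push-down list, the two sign-flipping $\neg$-clauses, and the fact that $\land$, $\lor$, $\neg$ each appear in Table \ref{aJoin:and:Meet:Friendly:Table} both as SLR/Skeleton and as SRA- or SRR/PIA nodes (depending on their position along the branch) must be reconciled; the secondary subtlety is the $\prec$-to-$\leq$ case of (ii), which hinges on $-{\diamdot}$ being a PIA node. The remaining verifications parallel the soundness and preservation arguments already in the literature and are routine bookkeeping, drawing on Proposition \ref{prop:Soundness:stage:1}.
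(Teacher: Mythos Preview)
Your proposal is correct and follows essentially the same approach as the paper's proof: push Skeleton $+\lor$/$-\land$ to the root via distribution, eliminate them by splitting, and observe that each Stage~1 rule preserves $(\Omega,\epsilon)$-inductiveness, the receiving/solvable status of antecedent inequalities, and the absence of black connectives. Your write-up is in fact more detailed than the paper's (which dispatches the lemma in one short paragraph, treating the preservation claims as evident), and in particular your explicit analysis of the subordination-rewriting step --- that $-{\diamdot}$ is an SRA/PIA node, so prepending it to a solvable or receiving $\prec$-inequality yields a solvable or receiving $\leq$-inequality --- makes precise a point the paper passes over in silence.
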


\begin{proof}
It is easy to see that by applying the distribution rules, in each inequality $\theta\leq\eta$ or $\theta\prec\eta$, consider the signed generation trees $+\theta$ and $-\eta$, all occurrences of $+\lor$ and $-\land$ in the Skeleton part of an $\epsilon$-critical branch have been pushed up towards the root of the signed generation trees $+\theta$ and $-\eta$. Then by exhaustively applying the splitting rules, all such $+\lor$ and $-\land$ are eliminated. 

Since by applying the distribution rules, the splitting rules and the monotone/antitone variable elimination rules do not change the $(\Omega,\epsilon)$-inductiveness of a signed generation tree for $+\alpha$, $-\beta$ (resp.\ $+\xi$, $-\chi$), and do not change the property of being $(\Omega, \epsilon)$-receiving or $(\Omega, \epsilon)$-solvable for $\phi\leq\psi$ or $\gamma\prec\delta$, so in $$\overline{\phi}\leq\overline{\psi}\ \&\ \overline{{\diamdot}\gamma}\leq\overline{\delta}\ \Rightarrow\ \overline{\alpha}\leq\overline{\beta}\ \&\ \overline{{\diamdot}\xi}\leq\overline{\chi},$$ 
\begin{itemize}
\item each $\phi\leq\psi$ or ${\diamdot}\gamma\leq\delta$ is either $(\Omega, \epsilon)$-receiving or $(\Omega, \epsilon)$-solvable;
\item each pair of $+\alpha$ and $-\beta$ (resp.\ $+{\diamdot}\xi$ and $-\chi$) are definite $(\Omega, \epsilon)$-inductive signed generation trees.
\end{itemize}
It is easy to see that no formula contains black connectives since the input quasi-inequality contains no black connective and no rule in Stage 1 introduces them.
\end{proof}

\begin{lemma}
After first approximation, the input $(\Omega,\epsilon)$-inductive inequality $$\overline{\phi}\leq\overline{\psi}\ \&\ \overline{\gamma}\prec\overline{\delta}\ \Rightarrow\ \alpha\triangleleft\beta \mbox{ (where } \triangleleft\in\{\leq,\prec\})$$
is rewritten into the form 
$$\overline{\phi}\leq\overline{\psi}\ \&\ \overline{{\diamdot}\gamma}\leq\overline{\delta}\ \&\ \nomi_0\leq\alpha\ \&\ \beta\leq\neg\nomi_1\  \Rightarrow\ \nomi_0\leq\neg\nomi_1$$
or
$$\overline{\phi}\leq\overline{\psi}\ \&\ \overline{{\diamdot}\gamma}\leq\overline{\delta}\ \&\ \nomi_0\leq{\diamdot}\xi\ \&\ \chi\leq\neg\nomi_1\  \Rightarrow\ \nomi_0\leq\neg\nomi_1,$$
where 
\begin{itemize}
\item each $\phi\leq\psi$ or ${\diamdot}\gamma\leq\delta$ is either $(\Omega, \epsilon)$-receiving or $(\Omega, \epsilon)$-solvable;
\item $+\alpha$ and $-\beta$ (resp.\ $+{\diamdot}\xi$ and $-\chi$) are definite $(\Omega, \epsilon)$-inductive signed generation trees;
\item each formula contains no black connectives.
\end{itemize}
\end{lemma}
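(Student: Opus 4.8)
The plan is to read this off almost immediately from Lemma~\ref{Lemma:Definite:inductive} together with the definition of the first-approximation rule; the statement is a bookkeeping corollary rather than a fresh argument, so the work consists entirely in checking that the first-approximation rule preserves the three listed properties.

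First I would invoke Lemma~\ref{Lemma:Definite:inductive}: after Stage~1 the input $(\Omega,\epsilon)$-inductive quasi-inequality has been transformed and split into quasi-inequalities of one of the two shapes $\overline{\phi}\leq\overline{\psi}\ \&\ \overline{{\diamdot}\gamma}\leq\overline{\delta}\ \Rightarrow\ \alpha\leq\beta$ or $\overline{\phi}\leq\overline{\psi}\ \&\ \overline{{\diamdot}\gamma}\leq\overline{\delta}\ \Rightarrow\ {\diamdot}\xi\leq\chi$, where each $\phi\leq\psi$ or ${\diamdot}\gamma\leq\delta$ is $(\Omega,\epsilon)$-receiving or $(\Omega,\epsilon)$-solvable, $+\alpha$ and $-\beta$ (resp.\ $+{\diamdot}\xi$ and $-\chi$) are definite $(\Omega,\epsilon)$-inductive, and no formula contains black connectives. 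Then I would apply the first-approximation rule: in the first case it replaces the consequent $\alpha\leq\beta$ by the two fresh antecedent inequalities $\nomi_0\leq\alpha$, $\beta\leq\neg\nomi_1$ and the pure consequent $\nomi_0\leq\neg\nomi_1$, producing exactly the first displayed form, and the second case is identical with $\alpha,\beta$ replaced by ${\diamdot}\xi,\chi$.

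It then remains to verify that the three properties are inherited. The inequalities in $\overline{\phi}\leq\overline{\psi}$ and $\overline{{\diamdot}\gamma}\leq\overline{\delta}$ are untouched by the first-approximation rule, so they stay $(\Omega,\epsilon)$-receiving or $(\Omega,\epsilon)$-solvable. For the signed generation trees: the antecedent inequality $\nomi_0\leq\alpha$ contributes $-\nomi_0$ and $+\alpha$, and $\beta\leq\neg\nomi_1$ contributes $-\beta$ and $+\neg\nomi_1$ (and analogously $+{\diamdot}\xi$, $-\chi$ in the second case), so the very trees $+\alpha$ and $-\beta$ (resp.\ $+{\diamdot}\xi$, $-\chi$) that witnessed definite $(\Omega,\epsilon)$-inductivity of the consequent reappear verbatim, with exactly the signs they carried before (cf.\ the conventions on page~\pageref{page:epsilon:subtree}); the new leaves $\nomi_0,\nomi_1$ are nominals and contain no propositional variables, hence are irrelevant to $<_\Omega$ and to $\epsilon$, and they are never $\epsilon$-critical. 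Finally, the first-approximation rule introduces only nominals and the connective $\neg$, so no black connective is created and the no-black-connectives property persists.

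I do not expect a genuine obstacle: the only point needing care is matching the sign conventions, i.e.\ confirming that the signed trees $+\alpha$ and $-\beta$ which govern the consequent before first approximation are precisely the signed trees occurring inside the antecedent inequalities $\nomi_0\leq\alpha$ and $\beta\leq\neg\nomi_1$ afterwards, so that definiteness and $(\Omega,\epsilon)$-inductivity transfer with no change. Everything else is immediate from the previous lemma.
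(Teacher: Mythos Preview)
Your proposal is correct and matches the paper's own approach: the paper's proof is literally the single word ``Straightforward,'' and your argument spells out precisely why it is straightforward, namely that Lemma~\ref{Lemma:Definite:inductive} already does all the work and the first-approximation rule merely repositions $\alpha,\beta$ (resp.\ ${\diamdot}\xi,\chi$) into the antecedent with fresh nominals, touching nothing else. Your check of the sign conventions is accurate and is the only nontrivial bookkeeping point.
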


\begin{proof}
Straightforward.
\end{proof}

Now we proceed to Stage 2 and work with set of inequalities (i.e.\ systems).
$$\{\overline{\phi}\leq\overline{\psi}, \overline{{\diamdot}\gamma}\leq\overline{\delta}, \nomi_0\leq\alpha, \beta\leq\neg\nomi_1\}$$
or 
$$\{\overline{\phi}\leq\overline{\psi}, \overline{{\diamdot}\gamma}\leq\overline{\delta}, \nomi_0\leq{\diamdot}\xi, \chi\leq\neg\nomi_1\}.$$

\begin{lemma}\label{Lemma:Approximation:Quasi}
For each inequality $\nomi_0\leq\alpha$ or $\beta\leq\neg\nomi_1$ (here we merge the $\nomi_0\leq{\diamdot}\xi$ case into the $\nomi_0\leq\alpha$ case, and merge the $\chi\leq\neg\nomi_1$ case into the $\beta\leq\neg\nomi_1$ case), by applying the approximation rules, splitting rules and residuation rules involving $\neg$ exhaustively, we obtain a set of inequalities, each inequality of which contains no black connectives and belong to one of the three classes:
\begin{itemize}
\item it is pure;
\item it is $(\Omega,\epsilon)$-receiving;
\item it is of the form $\nomi\leq\alpha'$ or $\beta'\leq\neg\nomi$, where $+\alpha'$ and $-\beta'$ are $(\Omega,\epsilon)$-inductive and their $\epsilon$-critical branches contain only PIA nodes.
\end{itemize}
\end{lemma}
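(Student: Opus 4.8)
The plan is to process each inequality $\nomi_0\leq\alpha$ (and, symmetrically, $\beta\leq\neg\nomi_1$, into which we have merged the $\nomi_0\leq{\diamdot}\xi$ and $\chi\leq\neg\nomi_1$ cases) by a recursion that peels off, one connective at a time, the Skeleton part of the signed generation tree $+\alpha$ (resp.\ $-\beta$) along its $\epsilon$-critical branches. Throughout the recursion we maintain the invariant that every inequality still being worked on has the form $\nomi\leq\alpha'$ (resp.\ $\beta'\leq\neg\nomi$) with $\nomi$ a nominal, $+\alpha'$ (resp.\ $-\beta'$) a definite $(\Omega,\epsilon)$-inductive signed generation tree containing no black connectives, and $\Omega,\epsilon$ unchanged. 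By Lemma~\ref{Lemma:Definite:inductive} the input inequalities satisfy this; and since none of the three permitted rule families (approximation, splitting, $\neg$-residuation) introduces black connectives, and each of them replaces a tree by subtrees of it (which inherit definiteness and $(\Omega,\epsilon)$-inductiveness, because goodness of a critical branch, the SRR side-conditions, and the absence of $+\lor,-\land$ from the Skeleton part of a critical branch are all hereditary under passing to a subtree), the invariant is preserved.

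The recursion dispatches on the root of $+\alpha'$, checking \emph{first} whether $+\alpha'$ is $\epsilon^{\partial}$-uniform; if so, $\nomi\leq\alpha'$ is $(\Omega,\epsilon)$-receiving and we stop (the second target class). Otherwise there is an $\epsilon$-critical branch, and since $+\alpha'$ is definite and black-connective-free and the $\prec$-inequalities have already been rewritten as $\leq$-inequalities with a leading ${\diamdot}$, the root, if it is a Skeleton (SLR) node, is one of $+\land,+\neg,+\Diamond,+{\diamdot}$. We treat $+\land$ by the splitting rule and recurse on both conjuncts; $+\neg$ by the $\neg$-residuation rule, rewriting $\nomi\leq\neg\alpha_1'$ as $\alpha_1'\leq\neg\nomi$ and recursing in the symmetric mode on $-\alpha_1'$; and $+\Diamond,+{\diamdot}$ by the corresponding approximation rule with a fresh nominal $\nomj$, which spills off the pure inequality $\nomi\leq\Diamond\nomj$ (resp.\ $\nomi\leq{\diamdot}\nomj$, the first target class) and continues on $\nomj\leq\alpha_1'$. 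In every remaining case the root is a PIA node of $+\alpha'$ (that is, $+\lor$, $+\to$, $+\Box$, $+{\boxdot}$, or a leaf): a PIA-only connective at the root of a definite $(\Omega,\epsilon)$-inductive tree forces each $\epsilon$-critical branch to lie entirely in its PIA part, so $\nomi\leq\alpha'$ already has the shape of the third target class and we stop. The treatment of $\beta'\leq\neg\nomi$ is parallel, with Skeleton roots of $-\beta'$ among $-\lor$ (splitting), $-\neg$ ($\neg$-residuation), $-\Box,-{\boxdot}$ (the box approximation rules, each spilling a pure inequality) and $-\to$ (the implication approximation rule, which spills the pure inequality $\nomj\to\neg\nomk\leq\neg\nomi$ and produces one inequality of each mode, $\nomj\leq\alpha_1'$ and $\alpha_2'\leq\neg\nomk$, on the two oppositely signed children of $-\to$).

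Termination is immediate, since each rule application strictly decreases the multiset of sizes of the signed generation trees attached to the not-yet-finished inequalities; thus the recursion halts with every inequality in one of the three stated classes, and no black connective is ever created. The step requiring the most care is the status of the connectives $+\land,-\lor,+\neg,-\neg$, which are simultaneously SLR and SRA and therefore straddle the non-canonical boundary between the Skeleton and PIA parts of a critical branch: one must check that it is harmless always to treat them as Skeleton nodes and keep peeling, which follows from the hereditariness noted above (so the surviving subtrees still satisfy the invariant, and whatever inequality we eventually stop at genuinely has all $\epsilon$-critical branches consisting of PIA nodes only). A minor bookkeeping point is that a $\neg$-residuation step can momentarily create a doubled negation in front of a nominal, which is absorbed by the classical law $\neg\neg\gamma=\gamma$ so that the halted inequalities really do have the form $\nomi\leq\alpha'$ or $\beta'\leq\neg\nomi$.
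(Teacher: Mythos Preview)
Your proposal is correct and follows essentially the same approach as the paper: an induction (you phrase it as a recursion with an explicit invariant) on the structure of $+\alpha'$ and $-\beta'$, peeling off the Skeleton root via splitting, $\neg$-residuation, or an approximation rule, and stopping once the root is a non-Skeleton node or the tree is $\epsilon^{\partial}$-uniform. Your organisation---first testing for $\epsilon^{\partial}$-uniformity, then dispatching on the root---is a mild repackaging of the paper's per-connective case analysis; your explicit discussion of the SLR/SRA overlap for $+\land,-\lor,\pm\neg$ and of the transient $\neg\neg\nomi$ are clarifying remarks the paper leaves implicit, but they do not constitute a different argument.
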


\begin{proof}
First of all, since the approximation rules, the splitting rules and the residuation rules involving $\neg$ do not introduce black connectives and $\alpha,\beta$ do not contain black connectives, so it is easy to see that the obtained inequalities do not contain black connectives.

Then we prove by induction on the complexity of $+\alpha$ and $-\beta$ to show that the obtained inequalities can be classified into three classes mentioned above:
\begin{itemize}
\item When $\alpha$ is $p$ and $\epsilon(p)=1$, then it is obvious that $+p$ is critical, $\nomi_{0}\leq\alpha$ belongs to the third class;

when $\alpha$ is $p$ and $\epsilon(p)=\partial$, then it is obvious that $+p$ is non-critical, $\nomi_{0}\leq\alpha$ belongs to the second class;

when $\beta$ is $p$ and $\epsilon(p)=\partial$, then it is obvious that $-p$ is critical, $\beta\leq\neg\nomi_{1}$ belongs to the third class;

when $\beta$ is $p$ and $\epsilon(p)=1$, then it is obvious that $-p$ is non-critical, $\beta\leq\neg\nomi_{1}$ belongs to the second class;

\item when $\alpha$ or $\beta$ is $\bot$ or $\top$, then it is already pure, so it belongs to the first class;

\item when $\alpha$ is $\neg\theta$, then since $+\alpha$ is definite $(\Omega,\epsilon)$-inductive, $-\theta$ is also definite $(\Omega,\epsilon)$-inductive, we first apply ($\neg$-Res-Right) to $\nomi_{0}\leq\neg\theta$ to obtain $\theta\leq\neg\nomi_{0}$, then we can apply the induction hypothesis to obtain the result;

when $\beta$ is $\neg\theta$, the proof is similar;

\item when $\alpha$ is $\Diamond\theta$, then since $+\alpha$ is definite $(\Omega,\epsilon)$-inductive, $+\theta$ is also definite $(\Omega,\epsilon)$-inductive, we first apply the approximation rule for $\Diamond$ to $\nomi_0\leq\Diamond\theta$ obtain $\nomi_{0}\leq\Diamond\nomj$ and $\nomj\leq\theta$. Now the first inequality belongs to the first class, and we can use the induction hypothesis to the second inequality, so we obtain the result;

when $\alpha$ is ${\diamdot}\theta$, $\beta$ is $\Box\theta$ or ${\boxdot}\theta$, the proof is similar;

\item when $\alpha$ is $\Box\theta$, then either $\alpha$ contains no critical branch or its critical branch is already a PIA branch; in the first case, $\nomi\leq\alpha$ is already in the second class; in the second case, it is already in the third class;

when $\alpha$ is ${\boxdot}\theta$, $\beta$ is $\Diamond\theta$ or ${\diamdot}\theta$, the proof is similar;

\item when $\alpha$ is $\theta\land\eta$, then since $+\alpha$ is definite $(\Omega,\epsilon)$-inductive, $+\theta,+\eta$ are also definite $(\Omega,\epsilon)$-inductive, we first apply the splitting rule for $\land$ to $\nomi_{0}\leq\theta\land\eta$ to obtain $\nomi_{0}\leq\theta$ and $\nomi_{0}\leq\eta$, and then we can use the induction hypothesis to obtain the result;

when $\beta$ is $\theta\lor\eta$, the proof is similar;

\item when $\alpha$ is $\theta\lor\eta$, since $+\alpha$ is definite $(\Omega,\epsilon)$-inductive, either $\alpha$ contains no critical branch (then $\nomi_0\leq\alpha$ belongs to the second class) or $+\lor$ is on a critical branch. Since $\Delta$-adjoint $+\lor$s are already eliminated in Stage 1, so $+\lor$ is an SRR node, so all aritical branches passing through $+\lor$ are PIA branches, so $\nomi_{0}\leq\alpha$ belongs to the third class;

when $\beta$ is $\theta\land\eta$, the proof is similar;

\item when $\alpha$ is $\theta\to\eta$, the proof is similar to the $\alpha=\theta\lor\eta$ case;

\item when $\beta$ is $\theta\to\eta$, then since $-\beta$ is definite $(\Omega,\epsilon)$-inductive, $+\theta,-\eta$ are also definite $(\Omega,\epsilon)$-inductive, we first apply the approximation rule for $\to$ to $\theta\to\eta\leq\neg\nomi_{1}$ to obtain $\nomi\leq\theta$, $\eta\leq\neg\nomj$ and $\nomi\to\neg\nomj\leq\neg\nomi_{1}$, then the last inequality belongs to the first class, and we can apply the induction hypothesis to $\nomi\leq\theta$ and $\eta\leq\neg\nomj$ obtain the result;

\item it is easy to see that $\alpha$ and $\beta$ cannot be of the form $\Diamondblack\theta$,$\blacksquare\theta$, ${\boxdotb}\theta$ or ${\diamdotb}\theta$.
\end{itemize}
\end{proof}

\begin{lemma}
For each $(\Omega,\epsilon)$-solvable inequality $\phi\leq\psi$ or ${\diamdot}\gamma\leq\delta$, by applying the residuation rules and splitting rules exhaustively, we obtain a set of inequalities, each inequality of which belong to one of the three classes:
\begin{itemize}
\item it is pure;
\item it is $(\Omega,\epsilon)$-receiving;
\item it is of the form $\theta\leq p$ when $\epsilon(p)=1$, or of the form $p\leq\theta$ when $\epsilon(p)=\partial$. In $\theta\leq p$, $-\theta$ is $\epsilon^{\partial}$-uniform, and in $p\leq\theta$, $+\theta$ is $\epsilon^{\partial}$-uniform. In addition, for all propositional variables $q$ in $\theta$, we have $q<_{\Omega} p$.
\end{itemize}
\end{lemma}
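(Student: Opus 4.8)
The plan is to imitate the induction in the proof of Lemma~\ref{Lemma:Approximation:Quasi}, but run it on the non-$\epsilon^\partial$-uniform side of the inequality and with one extra bookkeeping item, the dependence order. After Stage~1 the inequality has the form $\mu\le\nu$ (it is $\phi\le\psi$ itself, or ${\diamdot}\gamma\le\delta$ once the subordination rewriting rule has been applied). By the definition of $(\Omega,\epsilon)$-solvability, together with Lemma~\ref{Lemma:Definite:inductive}, exactly one of the two signed sides — call it $\star\theta$ — is $\epsilon^\partial$-uniform, while the other, $*\iota$, is \emph{definite} $(\Omega,\epsilon)$-inductive, contains no black connective, and has all its $\epsilon$-critical branches PIA; moreover every propositional variable in $\star\theta$ is $<_\Omega$ every leaf of an $\epsilon$-critical branch of $*\iota$. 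I would run the residuation and splitting rules so as to maintain, for every inequality produced: \textbf{(i)} one signed side (still written $\star\theta$) is $\epsilon^\partial$-uniform; \textbf{(ii)} the other side $*\iota$ is definite $(\Omega,\epsilon)$-inductive, black-connective free, and all its $\epsilon$-critical branches are PIA; \textbf{(iii)} every propositional variable in $\star\theta$ is $<_\Omega$ every $\epsilon$-critical leaf of $*\iota$. Invariant (ii) is essentially automatic: every rule only ever deletes a connective from $*\iota$ (possibly relocating it onto the $\star\theta$-side, negated or prefixed by $\Diamondblack$, ${\boxdotb}$, etc.), so $*\iota$ never gains a black connective and only shrinks, and a subtree of a definite $(\Omega,\epsilon)$-inductive tree whose critical branches are PIA again has that shape.

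For the induction I would argue on the number of connectives in $*\iota$. \emph{Base cases:} if $*\iota$ has no $\epsilon$-critical leaf it is $\epsilon^\partial$-uniform, so both sides are, and the inequality is $(\Omega,\epsilon)$-receiving (second class, and also in the first class if it has no propositional variable); if $*\iota$ is a single $\epsilon$-critical leaf $p$, then, reading off signs, the inequality is $\mu\le p$ with $\epsilon(p)=1$ or $p\le\mu$ with $\epsilon(p)=\partial$, the side $\mu$ is $\epsilon^\partial$-uniform (with the sign it carries) by (i), and by (iii) every variable of $\mu$ is $<_\Omega p$ — exactly the third class. \emph{Inductive step:} if $*\iota$ has at least one $\epsilon$-critical leaf, its root lies on an $\epsilon$-critical branch and so is a PIA node. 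When that root is a unary SRA node ($\neg,\Box,{\boxdot}$ — their black companions excluded by (ii)) or a binary SRA node ($+\land$ on the right, $-\lor$ on the left), I would apply the matching residuation rule (one of $\neg$-Res-Left/Right, $\Box$-Res, ${\boxdot}$-Res, $\Diamond$-Res, ${\diamdot}$-Res, read forwards or backwards as the sign demands) or the matching splitting rule; the connective leaves $*\iota$, the new $\star\theta$-side is assembled from $\epsilon^\partial$-uniform material by sign-respecting connectives (hence stays $\epsilon^\partial$-uniform) and acquires no new propositional variable, so (i) and (iii) survive, and I recurse on the strictly smaller non-uniform side(s).

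The genuinely delicate case, and the one I expect to be the main obstacle, is when the root of $*\iota$ is an SRR node ($+\lor$, $+\to$ or $-\land$). Here clause~2 of Definition~\ref{aInducive:Ineq:Def} tells us the node is $\bigstar(\kappa,\eta)$ or $\bigstar(\eta,\kappa)$ with the critical branch through $\eta$, with $\epsilon^\partial(\kappa)$ inherited, and with every variable of $\kappa$ strictly $<_\Omega$ the critical leaf(ves) of $\eta$. I would apply the residuation rule ($\lor$-Res-1/2, $\to$-Res-1/2, or $\land$-Res-1/2) that detaches $\kappa$ and absorbs it into the $\star\theta$-side — as $\neg\kappa$, or as the antecedent of a freshly formed $\to$ — leaving $\eta$ (or the residual of the old active side) as the new $*\iota$. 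Verifying the invariants then takes care: (i) holds since $\theta$ and $\kappa$ are both $\epsilon^\partial$-uniform and the combining connective respects their signs; and for (iii) one must combine two facts — the old $\star\theta$-variables are $<_\Omega$ \emph{all} $\epsilon$-critical leaves of the old $*\iota$, in particular those of $\eta$, and the $\kappa$-variables are $<_\Omega$ the critical leaves of $\eta$ by clause~2(b) — to conclude that every variable of the enlarged $\star\theta$-side is $<_\Omega$ every $\epsilon$-critical leaf of the new $*\iota$. Getting the sign bookkeeping right across the $\neg$-flips, and making sure each of the eleven possible signed non-black PIA roots is matched by a rule from Section~\ref{Sec:ALBA} used in the correct direction, is the fiddly part; the actual mathematical content sits entirely in this dependence-order computation. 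Finally, since each rule application strictly lowers the connective count of the non-uniform side of every inequality it outputs, the process terminates; when it stops every inequality's non-uniform side is a constant or a bare propositional variable, i.e.\ we are in one of the base cases, giving the claimed classification.
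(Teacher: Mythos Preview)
Your proposal is correct and follows essentially the same approach as the paper: an induction on the complexity of the non-$\epsilon^\partial$-uniform side, stripping PIA connectives via the matching residuation or splitting rules while tracking $\epsilon^\partial$-uniformity and the dependence-order constraint, with the SRR case handled exactly as you describe using clause~2 of Definition~\ref{aInducive:Ineq:Def}. Your invariant-maintenance phrasing is slightly more compact than the paper's (which restates the hypotheses explicitly at each case), and your enumeration of unary SRA roots in the parenthetical omits $-\Diamond$ and $-{\diamdot}$, but you correctly list their residuation rules, so nothing is actually missing.
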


\begin{proof}
Since ${\diamdot}\gamma\leq\delta$ is the special case of $\phi\leq\psi$ when the outermost connective of $\phi$ is ${\diamdot}$, we only consider $\phi\leq\psi$ here.

Without loss of generality we consider the situation when 
\begin{itemize}
\item $-\phi$ is $\epsilon^{\partial}$-uniform;
\item $+\psi$ has $\epsilon$-critical branches and is $(\Omega, \epsilon)$-inductive, and all $\epsilon$-critical branches in $+\psi$ are PIA branches;
\item for all the $\epsilon$-critical branches in $+\psi$ ending with $p$, all propositional variables $q$ in $-\phi$, we have $q<_{\Omega} p$.
\end{itemize}
The situation where $+\psi$ is $\epsilon^{\partial}$-uniform etc. is symmetric.

We prove by induction on the complexity of $\psi$:
\begin{itemize}
\item when $\psi$ is $p$: $\phi\leq\psi$ belongs to the third class and $\epsilon(p)=1$;
\item when $\psi$ is $\bot$ or $\top$: it cannot be the case, since $+\psi$ contains $\epsilon$-critical branches;
\item when $\psi$ is $\neg\theta$: we can first apply the residuation rule for $\neg$ to $\phi\leq\neg\theta$ to obtain $\theta\leq\neg\phi$, and then we can apply the induction hypothesis for the case where
\begin{itemize}
\item $+\neg\phi$ is $\epsilon^{\partial}$-uniform;
\item $-\theta$ has $\epsilon$-critical branches and is $(\Omega, \epsilon)$-inductive, and all $\epsilon$-critical branches in $-\theta$ are PIA branches;
\item for all the $\epsilon$-critical branches in $-\theta$ ending with $p$, all propositional variables $q$ in $+\neg\phi$, we have $q<_{\Omega} p$;
\end{itemize}
\item when $\psi$ is $\Diamond\theta$: it cannot be the case, since $\Diamond$ is on an $\epsilon$-critical branch in $+\psi$ but it is not a PIA node;
\item when $\psi$ is ${\diamdot}\theta$: similar to the $\Diamond\theta$ case;
\item when $\psi$ is $\Box\theta$: we can first apply the residuation rule for $\Box$ to $\phi\leq\Box\theta$ to obtain $\Diamondblack\phi\leq\theta$, and then we can apply the induction hypothesis for the case where
\begin{itemize}
\item $-\Diamondblack\phi$ is $\epsilon^{\partial}$-uniform;
\item $+\theta$ has $\epsilon$-critical branches and is $(\Omega, \epsilon)$-inductive, and all $\epsilon$-critical branches in $+\theta$ are PIA branches;
\item for all the $\epsilon$-critical branches in $+\theta$ ending with $p$, all propositional variables $q$ in $-\Diamondblack\phi$, we have $q<_{\Omega} p$;
\end{itemize}
\item when $\psi$ is ${\boxdot}\theta$, the situation is similar to the $\Box\theta$ case;
\item when $\psi$ is $\theta\land\eta$, we first apply the splitting rule for $\land$ to $\phi\leq\theta\land\eta$ to obtain $\phi\leq\theta$ and $\phi\leq\eta$, then there are two possibilities, namely both of $+\theta$ and $+\eta$ have $\epsilon$-critical branches, and only one of $+\theta$ and $+\eta$ has $\epsilon$-critical branches;

for the first possibility, we can apply the induction hypothesis for the case where 
\begin{itemize}
\item $-\phi$ is $\epsilon^{\partial}$-uniform;
\item $+\theta$ and $+\eta$ have $\epsilon$-critical branches and are $(\Omega, \epsilon)$-inductive, and all $\epsilon$-critical branches in $+\theta$ and $+\eta$ are PIA branches;
\item for all the $\epsilon$-critical branches in $+\theta$ and $+\eta$ ending with $p$, all propositional variables $q$ in $-\phi$, we have $q<_{\Omega} p$;
\end{itemize}

for the second possibility, without loss of generality we assume that $+\theta$ has $\epsilon$-critical branches and $+\eta$ is $\epsilon^{\partial}$-uniform, then $\phi\leq\eta$ is $(\Omega, \epsilon)$-receiving, and we can apply the induction hypothesis to $\phi\leq\theta$ for the case where 
\begin{itemize}
\item $-\phi$ is $\epsilon^{\partial}$-uniform;
\item $+\theta$ have $\epsilon$-critical branches and are $(\Omega, \epsilon)$-inductive, and all $\epsilon$-critical branches in $+\theta$ are PIA branches;
\item for all the $\epsilon$-critical branches in $+\theta$ ending with $p$, all propositional variables $q$ in $-\phi$, we have $q<_{\Omega} p$;
\end{itemize}

\item when $\psi$ is $\theta\lor\eta$: since $\lor$ is an SRR node, only one of $\theta$ and $\eta$ contains $\epsilon$-critical branches (without loss of generality we assume that it is $\eta$). Then 
\begin{itemize}
\item $+\theta$ is $\epsilon^{\partial}$-uniform;
\item for each $p$ in an $\epsilon$-critical branch in $+\eta$, each $q$ that occurs in $+\theta$, we have $q<_{\Omega} p$;
\end{itemize}
Now we apply the residuation rule for $\lor$ to $\phi\leq\theta\lor\eta$ to obtain $\phi\land\neg\theta\leq\eta$, then we can apply the induction hypothesis for the case where 
\begin{itemize}
\item $-(\phi\land\neg\theta)$ is $\epsilon^{\partial}$-uniform;
\item $+\eta$ has $\epsilon$-critical branches and is $(\Omega,\epsilon)$-inductive, and all $\epsilon$-critical branches in $+\eta$ are PIA branches;
\item for all the $\epsilon$-critical branches in $+\eta$ ending with $p$, all propositional variables $q$ in $-(\phi\land\neg\theta)$, we have $q<_{\Omega} p$;
\end{itemize}
\item when $\psi$ is $\theta\to\eta$: the situation is similar to the $\theta\lor\eta$ case (in the sense of using one of the residuation rules for $\to$).
\end{itemize}
\end{proof}

\begin{lemma}
Inequalities of the form $\nomi\leq\alpha'$ or $\beta'\leq\neg\nomi$, where $+\alpha'$ and $-\beta'$ are $(\Omega,\epsilon)$-inductive and their $\epsilon$-critical branches contain only PIA nodes are  $(\Omega,\epsilon)$-solvable, and therefore the previous lemma is applicable to them.
\end{lemma}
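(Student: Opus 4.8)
The plan is to unfold the definition of a $(\Omega,\epsilon)$-solvable inequality and verify its three clauses directly for $\nomi\leq\alpha'$, treating $\beta'\leq\neg\nomi$ by the order-dual argument. So I set $\phi:=\nomi$, $\psi:=\alpha'$, and propose $\star\theta:=-\phi=-\nomi$ as the $\epsilon^\partial$-uniform side and $*\iota:=+\psi=+\alpha'$ as the other side (for $\beta'\leq\neg\nomi$ one takes instead $\star\theta:=+(\neg\nomi)$, which is pure, and $*\iota:=-\beta'$). A preliminary observation: if $+\alpha'$ carries no $\epsilon$-critical branch, then every propositional-variable leaf of $+\alpha'$ is non-$\epsilon$-critical, hence $\epsilon^\partial$-critical, so $+\alpha'$ is $\epsilon^\partial$-uniform and $\nomi\leq\alpha'$ is in fact $(\Omega,\epsilon)$-receiving — this is precisely how such an inequality is classified in the preceding lemma, so it needs no further treatment. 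Hence I may assume $+\alpha'$ has at least one $\epsilon$-critical branch.

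Under this assumption I would check the clauses in turn. Clause (1): $-\nomi$ contains no propositional variable, so it is vacuously $\epsilon^\partial$-uniform; conversely, the leaf of an $\epsilon$-critical branch of $+\alpha'$ is an $\epsilon$-critical occurrence, and an $\epsilon$-critical leaf is never $\epsilon^\partial$-critical (a $+p_i$-leaf is $\epsilon$-critical exactly when $\epsilon_i=1$ and $\epsilon^\partial$-critical exactly when $\epsilon_i=\partial$, dually for $-p_i$), so $+\alpha'$ is not $\epsilon^\partial$-uniform; thus exactly one of $-\nomi,+\alpha'$ is $\epsilon^\partial$-uniform, namely $\star\theta$, and $*\iota=+\alpha'$. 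Clause (2): by hypothesis $+\alpha'$ is $(\Omega,\epsilon)$-inductive, so each of its $\epsilon$-critical branches is good, and by hypothesis each such branch consists, apart from its variable node, of PIA-nodes only; since each branch meets its unique variable node only at the leaf, the Skeleton segment $P_2$ in the good-branch decomposition may be taken to have length $0$, so each $\epsilon$-critical branch of $+\alpha'$ is a PIA branch. Clause (3): the requirement that $q<_{\Omega}p$ for every $\epsilon$-critical leaf $p$ of $*\iota=+\alpha'$ and every propositional variable $q$ occurring in $\star\theta=-\nomi$ holds vacuously, since $-\nomi$ has no propositional variables. This establishes that $\nomi\leq\alpha'$ is $(\Omega,\epsilon)$-solvable.

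The $\beta'\leq\neg\nomi$ case runs identically, with $+(\neg\nomi)$ (again pure, hence $\epsilon^\partial$-uniform) in place of $-\nomi$ and $-\beta'$ in place of $+\alpha'$; once solvability is in hand, the previous lemma on $(\Omega,\epsilon)$-solvable inequalities applies verbatim. The only step that calls for care — and the one I expect to be the genuine obstacle — is the inference in clause (2) from ``the $\epsilon$-critical branch contains only PIA-nodes'' to ``it is a PIA branch'', because several signed connectives (for instance $+\wedge$, $-\vee$, $+\neg$, $-\neg$) are listed both as SLR (Skeleton) and as SRA (PIA) nodes, so one must argue that in the decomposition $P_1P_2$ of the good branch the Skeleton part $P_2$ can be chosen empty; this is exactly where the ``one variable node per branch'' observation is used. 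Everything else is a routine match against the definitions of $\epsilon^\partial$-uniform, good/PIA branch, $(\Omega,\epsilon)$-inductive and $(\Omega,\epsilon)$-solvable.
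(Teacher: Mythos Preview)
Your proposal is correct and matches the paper's own proof, which is simply recorded as ``Straightforward.'' Your treatment is in fact more careful: you correctly separate out the edge case where $+\alpha'$ (resp.\ $-\beta'$) has no $\epsilon$-critical branch (so the inequality is receiving rather than solvable, already covered by the second class of the previous lemma), and you correctly note that ``contains only PIA nodes'' immediately yields a PIA branch by taking $P_2$ empty in the good-branch decomposition.
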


\begin{proof}
Straightforward.
\end{proof}

Now the set of inequalities have three kinds of inequalities, namely 
\begin{itemize}
\item pure inequalities;
\item $(\Omega,\epsilon)$-receiving inequalities;
\item $\theta\leq p$ when $\epsilon(p)=1$, or $p\leq\theta$ when $\epsilon(p)=\partial$. In $\theta\leq p$, $-\theta$ is $\epsilon^{\partial}$-uniform, and in $p\leq\theta$, $+\theta$ is $\epsilon^{\partial}$-uniform. In addition, for all propositional variables $q$ in $\theta$, we have $q<_{\Omega} p$.
\end{itemize}

It is ready to repeatedly apply the Ackermann lemmas, and then all propositional variables are eliminated. Since after the monotone/antitone variable elimination rules in the first stage, each variable has both critical branches and non-critical branches in the corresponding signed generation trees, therefore in Stage 2, they have occurrences in both $(\Omega,\epsilon)$-receiving inequalities and inequalities of the form $\theta\leq p$ or $p\leq\theta$, so they all can be eliminated.

\begin{theorem}[Success Theorem]\label{Them:Success}
For all $(\Omega,\epsilon)$-inductive quasi-inequalities which do not contain black connectives, $\mathsf{ALBA}$ successfully reduce them to pure quasi-inequalities, and therefore can transform them into first-order correspondents.
\end{theorem}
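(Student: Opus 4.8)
The plan is to chain together the structural lemmas proved above, tracking the shape of the (quasi-)inequalities through the three stages of $\mathsf{ALBA}$, and to finish with a well-founded induction on the dependence order $<_\Omega$ for the Ackermann eliminations. Since $<_\Omega$ is an irreflexive transitive relation on the finite set $\{p_1,\dots,p_n\}$, it is well-founded, so ``induction on $<_\Omega$'' and ``repeatedly pick a $<_\Omega$-minimal remaining variable'' make sense.

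First I would fix an $(\Omega,\epsilon)$-inductive quasi-inequality containing no black connectives and run Stage 1. By Lemma \ref{Lemma:Definite:inductive} the result is a meta-conjunction of quasi-inequalities of the form $\overline{\phi}\leq\overline{\psi}\ \&\ \overline{{\diamdot}\gamma}\leq\overline{\delta}\ \Rightarrow\ \alpha\leq\beta$ (or with ${\diamdot}\xi\leq\chi$ as consequent), where every antecedent inequality is $(\Omega,\epsilon)$-receiving or $(\Omega,\epsilon)$-solvable, $+\alpha$ and $-\beta$ are \emph{definite} $(\Omega,\epsilon)$-inductive, and no black connectives have been introduced; success for the original quasi-inequality reduces to success for each such conjunct. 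Applying the first-approximation rule (the lemma right after Lemma \ref{Lemma:Definite:inductive}) I pass to a system $\{\overline{\phi}\leq\overline{\psi},\ \overline{{\diamdot}\gamma}\leq\overline{\delta},\ \nomi_0\leq\alpha,\ \beta\leq\neg\nomi_1\}$ (or the ${\diamdot}\xi$/$\chi$ variant), together with the target $\nomi_0\leq\neg\nomi_1$, keeping all the structural properties.

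Next I would work out Stage 2 in two waves. Wave one: apply Lemma \ref{Lemma:Approximation:Quasi} to $\nomi_0\leq\alpha$ and $\beta\leq\neg\nomi_1$, so that, via approximation, splitting and $\neg$-residuation rules, they become inequalities that are pure, $(\Omega,\epsilon)$-receiving, or of the form $\nomi\leq\alpha'$ / $\beta'\leq\neg\nomi$ with $+\alpha'$, $-\beta'$ being $(\Omega,\epsilon)$-inductive with only PIA nodes on $\epsilon$-critical branches. By the last lemma above these are $(\Omega,\epsilon)$-solvable, so together with the solvable antecedent inequalities $\phi\leq\psi$ / ${\diamdot}\gamma\leq\delta$ they fall under the solvable-inequality lemma: wave two applies residuation and splitting exhaustively and reduces each of them to a pure inequality, an $(\Omega,\epsilon)$-receiving inequality, or a ``minimal valuation'' inequality $\theta\leq p$ (when $\epsilon(p)=1$) or $p\leq\theta$ (when $\epsilon(p)=\partial$), in which the displayed side is $\epsilon^\partial$-uniform and every propositional variable of $\theta$ is strictly $<_\Omega$-below $p$. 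So the system now consists only of pure, $(\Omega,\epsilon)$-receiving, and minimal-valuation inequalities, still with no black connectives.

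Finally I would run the Ackermann loop by induction on $<_\Omega$, each time choosing a propositional variable $p$ that is $<_\Omega$-minimal among those still present. By the induction hypothesis every variable $<_\Omega p$ is already gone, hence every minimal-valuation inequality $\theta\leq p$ (resp. $p\leq\theta$) now has $\theta$ pure and $p$-free. Because the monotone/antitone variable-elimination rules of Stage 1 ensure that $p$ occurs both $\epsilon$-critically and non-$\epsilon$-critically, $p$ does occur both in such a minimal-valuation inequality and in some $(\Omega,\epsilon)$-receiving inequality; and $\epsilon^\partial$-uniformity of the receiving inequalities — and of the displayed side of minimal-valuation inequalities of higher variables — forces $p$ to occur with exactly the polarity demanded by the side conditions of the right-handed Ackermann rule when $\epsilon(p)=1$ (resp. the left-handed rule when $\epsilon(p)=\partial$). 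So Lemma \ref{Lemma:Right:Ackermann} (and its left-handed analogue) applies and eliminates $p$; substituting the relevant pure join/meet for $p$ preserves pureness, receiving-ness and the minimal-valuation shape of all remaining inequalities, so the induction continues. When it stops, the system is pure, and the standard translation (Proposition \ref{Prop:ST:ineq:quasi:mega}) turns it into a first-order correspondent.

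The step I expect to be the main obstacle is the last one: checking that at every pass of the Ackermann loop the $<_\Omega$-minimal remaining variable $p$ satisfies the polarity side conditions of the Ackermann rule against \emph{every} inequality of the current system — receiving inequalities and minimal-valuation inequalities of higher variables alike — and that the substitution keeps the whole system within the three admissible shapes so the induction goes through. This is precisely where the interplay of $\epsilon$-criticality, $\epsilon^\partial$-uniformity, the PIA/Skeleton classification, and the dependence order $<_\Omega$ has to be handled carefully; the rest is bookkeeping on the shapes delivered by the structural lemmas.
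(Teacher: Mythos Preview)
Your proposal is correct and follows essentially the same route as the paper: chain Lemma \ref{Lemma:Definite:inductive}, the first-approximation lemma, Lemma \ref{Lemma:Approximation:Quasi}, the solvable-inequality lemma, and the observation that the third-class inequalities from Lemma \ref{Lemma:Approximation:Quasi} are themselves $(\Omega,\epsilon)$-solvable, then finish with repeated Ackermann eliminations ordered by $<_\Omega$. The paper's own wrap-up is in fact terser than yours --- it simply says ``it is ready to repeatedly apply the Ackermann lemmas'' and invokes the variable-elimination argument to guarantee both critical and non-critical occurrences --- whereas you spell out the well-founded induction and the polarity check explicitly; that extra care is fine and does not diverge from the intended argument.

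One small inaccuracy: after wave two the system does \emph{not} remain free of black connectives. The residuation rules used in the solvable-inequality lemma (e.g.\ $\Box$-Res introduces $\Diamondblack$, ${\boxdot}$-Res introduces ${\diamdotb}$) inject them into the $\theta$-side of the minimal-valuation inequalities. This is harmless for the Ackermann step --- the rule is indifferent to which connectives occur --- so your argument still goes through, but the parenthetical ``still with no black connectives'' should be dropped.
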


\section{Canonicity}\label{Sec:Canonicity}

In the present section, we will prove that all inductive quasi-inequalities which do not contain black connectives or ${\diamdot},{\boxdot}$ (we call them \emph{restricted inductive quasi-inequalities}\label{Page:Restricted:Inductive:Quasi:Inequality}) are canonical, in the sense that if they are admissibly valid on a Stone space with two relations, then they are valid on the same space. The proof style is similar to \cite{CoPa12}.

\begin{definition}[Restricted inductive quasi-inequality]\label{Def:Restricted:Inductive:Quasi}
Given an order-type $\epsilon$ and a dependence order $<_{\Omega}$, we say that a quasi-inequality is $(\Omega,\epsilon)$-restricted inductive, if it is $(\Omega,\epsilon)$-inductive and does not contain nominals, ${\diamdot},{\boxdot}$ or black connectives $\Diamondblack,\blacksquare,{\boxdotb},{\diamdotb}$.
\end{definition}

\begin{definition}[Canonicity]
We say that a quasi-inequality is \emph{canonical} if whenever it is admissibly valid on a Stone space with two relations, it is also valid on it.
\end{definition}

By the duality theory of modal subordination algebras, the canonicity definition above is equivalent to the preservation under taking canonical extensions of modal subordination algebras.

\subsection{U-Shaped Argument}\label{SubSec:U:Shaped:Argument}

To prove the canonicity of inductive quasi-inequalities, we use the canonicity-via-correspondence argument, which is a variation of the standard U-shaped argument (cf.\ \cite{CoPa12}) represented in the diagram below. To go through the U-shaped argument, we show that \emph{topologically correct executions} of $\mathsf{ALBA}$ is sound with respect to the Stone spaces with two relations, while we replace validity (with respect to arbitrary valuations) by admissible validity (with respect to clopen/admissible valuations). 

\begin{center}
\begin{tabular}{l c l}\label{table:U:shape}
$X\Vdash_{Clop}\mathsf{Quasi}$ & &$X\Vdash_{P}\mathsf{Quasi}$\\
\ \ \ \ \ \ $\Updownarrow$ & &\ \ \ \ \ \ $\Updownarrow$\\
$X\vDash\mathsf{FO(Quasi)}$ &\ \ \ $\Leftrightarrow$ \ \ \ &$X\vDash\mathsf{FO(Quasi)}$
\end{tabular}
\end{center}

This argument starts from the top-left corner with the validity of the input quasi-inequality $\mathsf{Quasi}$ on $X$, then uses topologically correct executions of $\mathsf{ALBA}$ to transform the quasi-inequality into an equivalent set of quasi-inequalities as well as its first-order translation $\mathsf{FO(Quasi)}$. Since the validity of the first-order formulas does not depend on the admissible set, the bottom equivalence is obvious. The right half of the argument goes on the side of arbitrary valuations, the soundness of which was already shown in Section \ref{Sec:Soundness}.

We will focus on the equivalences of the left-arm, i.e.,\ the soundness of topologically correct executions of $\mathsf{ALBA}$ with respect to admissible valuations.

\subsection{Topological Correct Executions}

In the present subsection, we show that $\mathsf{ALBA}$ can be executed in a topologically correct way on restricted inductive quasi-inequalities (see Definition \ref{Def:Restricted:Inductive:Quasi}), which paves the way for the use of the topological Ackermann lemmas in the next subsection.

We first define syntactically closed and syntactically open formulas as follows:

\begin{definition}[Syntactically closed and open formulas]\label{Def:Closed:Open:Formulas}
\begin{enumerate}
\item A formula is \emph{syntactically closed} if all occurrences of nominals and $\Diamondblack,{\diamdot},{\diamdotb}$ are positive, and all occurrences of $\Boxblack,{\boxdot},{\boxdotb}$ are negative;
\item A formula is \emph{syntactically open} if all occurrences of nominals and $\Diamondblack,{\diamdot},{\diamdotb}$ are negative, and all occurrences of $\Boxblack,{\boxdot},{\boxdotb}$ are positive.
\end{enumerate}
\end{definition}

As is discussed in \cite[Section 9]{CoPa12}, the underlying idea of the definitions above is that given an admissible valuation, the truth set of a syntactically closed (resp.\ open) formula is always a closed (resp.\ open) subset in $X$ (see Definition \ref{Def:StRR'}).

We recall the right-handed and left-handed Ackermann rule:

The right-handed Ackermann rule:

The system 
$\left\{ \begin{array}{ll}
\theta_1\leq p \\
\vdots\\
\theta_n\leq p \\
\eta_1\leq\iota_1\\
\vdots\\
\eta_m\leq\iota_m\\
\end{array} \right.$ 
is replaced by 
$\left\{ \begin{array}{ll}
\eta_1((\theta_1\lor\ldots\lor\theta_n)/p)\leq\iota_1((\theta_1\lor\ldots\lor\theta_n)/p) \\
\vdots\\
\eta_m((\theta_1\lor\ldots\lor\theta_n)/p)\leq\iota_m((\theta_1\lor\ldots\lor\theta_n)/p) \\
\end{array} \right.$
where:
\begin{enumerate}
\item $p$ does not occur in $\theta_1, \ldots, \theta_n$;
\item Each $\eta_i$ is positive, and each $\iota_i$ negative in $p$, for $1\leq i\leq m$.
\end{enumerate}
The left-handed Ackermann rule:

The system
$\left\{ \begin{array}{ll}
p\leq\theta_1 \\
\vdots\\
p\leq\theta_n \\
\eta_1\leq\iota_1\\
\vdots\\
\eta_m\leq\iota_m\\
\end{array} \right.$
is replaced by
$\left\{ \begin{array}{ll}
\eta_1((\theta_1\land\ldots\land\theta_n)/p)\leq\iota_1((\theta_1\land\ldots\land\theta_n)/p) \\
\vdots\\
\eta_m((\theta_1\land\ldots\land\theta_n)/p)\leq\iota_m((\theta_1\land\ldots\land\theta_n)/p) \\
\end{array} \right.$
where:
\begin{enumerate}
\item $p$ does not occur in $\theta_1, \ldots, \theta_n$;
\item Each $\eta_i$ is negative, and each $\iota_i$ positive in $p$, for $1\leq i\leq m$.
\end{enumerate}

\begin{definition}[Topologically Correct Executions]\label{Def:Topological:Correct:Executions}
\begin{itemize}
\item We call an execution of the Ackermann rule \emph{topologically correct}, if for each non-pure inequality in the system, the left-hand side is syntactically closed, and the right-hand side is syntactically open.
\item We call an execution of $\mathsf{ALBA}$ \emph{topologically correct}, if each execution of right-handed and left-handed Ackermann lemma is topologically correct.
\end{itemize}
\end{definition}

\begin{theorem}
Given a restricted inductive quasi-inequality as input, ALBA can topologically correctly execute on it.
\end{theorem}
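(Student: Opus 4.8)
The plan is to run $\mathsf{ALBA}$ on the given $(\Omega,\epsilon)$-restricted inductive quasi-inequality exactly as in the Success Theorem (Theorem~\ref{Them:Success}) and to prove, by induction on the number of rule applications in Stage~2, that the following closure condition $(\ast)$ is maintained: \emph{in every non-pure inequality of the current system, the left-hand side is syntactically closed and the right-hand side is syntactically open} (Definition~\ref{Def:Closed:Open:Formulas}). Since topological correctness (Definition~\ref{Def:Topological:Correct:Executions}) asks precisely that $(\ast)$ hold at every application of a left- or right-handed Ackermann rule, establishing $(\ast)$ as an invariant suffices. The observation that makes this work is that, by Definition~\ref{Def:Restricted:Inductive:Quasi}, the input contains no nominals, no $\diamdot$ or $\boxdot$, and no black connectives $\Diamondblack,\blacksquare,\boxdotb,\diamdotb$; hence every subformula occurring in the input is at once syntactically closed and syntactically open, and the only colored operator ever reintroduced during preprocessing is a $\diamdot$, and only in a positive, left-hand position (from the subordination rewriting rule, which turns $\gamma\prec\delta$ into $\diamdot\gamma\leq\delta$).

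First I would trace Stage~1 and the first-approximation rule. The distribution, splitting and monotone/antitone variable-elimination rules only rearrange formulas built from $\lor,\land,\neg,\Box,\Diamond,\to$ and never create a nominal or a colored connective, so the formulas remain colorless there, hence both closed and open; the subordination rewriting rule makes $\diamdot\gamma$ (with $\diamdot$ positive, so syntactically closed) on the left and keeps $\delta$ colorless on the right. The first-approximation rule then adjoins $\nomi_0\leq\alpha$ and $\beta\leq\neg\nomi_1$ (or $\nomi_0\leq\diamdot\xi$ and $\chi\leq\neg\nomi_1$): the occurrence of $\nomi_0$ is positive, so that left-hand side is syntactically closed, while $\neg\nomi_1$ is syntactically open. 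The one point at which $(\ast)$ could fail to hold verbatim is the inequality $\nomi_0\leq\diamdot\xi$ coming from a $\prec$-consequent, whose right-hand side carries a positive $\diamdot$; but the approximation rule for $\diamdot$ immediately replaces it by the pure inequality $\nomi_0\leq\diamdot\nomj$ together with $\nomj\leq\xi$, after which $(\ast)$ holds of the whole system entering the main part of Stage~2.

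Next I would verify, rule by rule, that every Stage~2 transformation preserves $(\ast)$. The splitting rules are immediate. For the residuation rules: $(\Diamond\text{-Res})$ sends a closed $\Diamond\theta$ to a closed $\theta$ on the left and an open $\iota$ to $\blacksquare\iota$, syntactically open because the introduced $\blacksquare$ is positive; $(\Box\text{-Res})$ dually introduces $\Diamondblack$ positively on the left; the $\neg$-residuation rules move a subformula across the inequality while flipping one $\neg$, which preserves the closed/open pattern, and the $\land$-, $\lor$- and $\to$-residuation rules involve only $\land,\lor,\neg,\to$ and likewise move subformulas across the inequality with consistent polarity changes; $(\diamdot\text{-Res})$ introduces $\boxdotb$ positively on the right (hence open) and $(\boxdot\text{-Res})$ introduces $\diamdotb$ positively on the left (hence closed) --- and $(\boxdot\text{-Res})$ never actually fires on a restricted input, since no $\boxdot$ is ever present, while $(\diamdot\text{-Res})$ acts only on the positive $\diamdot$ produced by rewriting. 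The approximation rules introduce fresh nominals positively on the closed side ($\nomj\leq\theta$) and negatively on the open side ($\theta\leq\neg\nomj$), plus pure inequalities, so $(\ast)$ survives. Finally, when a right-handed Ackermann rule fires, $(\ast)$ gives that each minimal-valuation inequality $\theta_i\leq p$ has $\theta_i$ syntactically closed and each receiving inequality $\eta_j\leq\iota_j$ has $\eta_j$ closed and $\iota_j$ open; since $\eta_j$ is positive and $\iota_j$ negative in $p$, substituting the syntactically closed formula $\theta_1\lor\cdots\lor\theta_n$ for $p$ leaves $\eta_j$ closed and $\iota_j$ open, so $(\ast)$ holds of the output system (the left-handed rule is symmetric, using that $\theta_1\land\cdots\land\theta_n$ is syntactically open). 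Hence every Ackermann step is topologically correct, and so is the whole execution.

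I expect the main obstacle to be twofold. First, the substitution step in the last paragraph requires a supporting lemma --- substituting a syntactically closed (resp.\ open) formula for a positively occurring propositional variable in a syntactically closed (resp.\ open) formula yields a syntactically closed (resp.\ open) formula, with the roles of closed and open exchanged at negative occurrences --- which is a routine induction on formula structure but must be isolated and stated, in the style of the corresponding lemma in \cite{CoPa12}. Second, and more conceptually, one has to check carefully that the combination of two facts --- that $\diamdot,\boxdot$ and the black connectives are absent from the \emph{input}, and that preprocessing reintroduces $\diamdot$ only in a positive left-hand position --- is exactly what prevents a colored operator from ever surfacing on a polarity-mismatched side during the residuation cycle: it is precisely a $\boxdot$ (or a black connective) on a left-hand side, or a $\diamdot$ on a right-hand side, that would violate $(\ast)$, and excluding these is the entire point of restricting to restricted inductive quasi-inequalities.
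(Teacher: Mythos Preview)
Your proposal is correct and follows essentially the same approach as the paper: establish the invariant that every non-pure inequality in the system has a syntactically closed left-hand side and a syntactically open right-hand side, note that this invariant holds after Stage~1 plus first approximation (with the one-off use of the $\diamdot$-approximation rule to dispose of $\nomi_0\leq\diamdot\xi$), and then argue that every Stage~2 rule preserves it. The paper's proof asserts the last step with ``it is easy to check''; you actually sketch the rule-by-rule verification and flag the substitution lemma needed for the Ackermann steps, which is more explicit than the paper but not a different argument.
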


\begin{proof}

We basically follow the success proof in Section \ref{Sec:Success}, while pay attention to some details (on the topological correctness) of the execution. From the proof of Lemma \ref{Lemma:Definite:inductive}, given an $(\Omega, \epsilon)$-restricted inductive quasi-inequality
$$\overline{\phi}\leq\overline{\psi}\ \&\ \overline{\gamma}\prec\overline{\delta}\ \Rightarrow\ \alpha\triangleleft\beta \mbox{ (where } \triangleleft\in\{\leq,\prec\}),$$
after the first stage, it is transformed into a set of quasi-inequalities of the form 
$$\overline{\phi}\leq\overline{\psi}\ \&\ \overline{{\diamdot}\gamma}\leq\overline{\delta}\ \Rightarrow\ \alpha\leq\beta$$
or
$$\overline{\phi}\leq\overline{\psi}\ \&\ \overline{{\diamdot}\gamma}\leq\overline{\delta}\ \Rightarrow\ {\diamdot}\xi\leq\chi,$$
where 
\begin{itemize}
\item each $\phi\leq\psi$ or ${\diamdot}\gamma\leq\delta$ is either $(\Omega, \epsilon)$-receiving or $(\Omega, \epsilon)$-solvable;
\item $+\alpha$ and $-\beta$ (resp.\ $+{\diamdot}\xi$ and $-\chi$) are definite $(\Omega, \epsilon)$-inductive signed generation trees;
\item each formula contains no black connectives.
\end{itemize}

Since for the rules in the Stage 1, only the subordination rewritting rule introduce ${\diamdot}$ and ${\diamdot}$ only occurs in the outermost level, it is easy to see that $\phi,\psi,\gamma,\delta,\alpha,\beta,\xi,\chi$ do not contain ${\diamdot},{\boxdot},{\diamdotb},{\boxdotb}$, so they are both syntactically closed and open.

Therefore, if we apply the first-approximation rule to the quasi-inequality, we obtain 
$$\overline{\phi}\leq\overline{\psi}\ \&\ \overline{{\diamdot}\gamma}\leq\overline{\delta}\ \&\ \nomi_0\leq\alpha\ \&\ \beta\leq\neg\nomi_1\  \Rightarrow\ \nomi_0\leq\neg\nomi_1$$
or
$$\overline{\phi}\leq\overline{\psi}\ \&\ \overline{{\diamdot}\gamma}\leq\overline{\delta}\ \&\ \nomi_0\leq{\diamdot}\xi\ \&\ \chi\leq\neg\nomi_1\  \Rightarrow\ \nomi_0\leq\neg\nomi_1,$$
where 
\begin{itemize}
\item each $\phi\leq\psi$ or ${\diamdot}\gamma\leq\delta$ is either $(\Omega, \epsilon)$-receiving or $(\Omega, \epsilon)$-solvable;
\item $+\alpha$ and $-\beta$ (resp.\ $+{\diamdot}\xi$ and $-\chi$) are definite $(\Omega, \epsilon)$-inductive signed generation trees;
\item each of $\phi,\psi,\gamma,\delta,\alpha,\beta,\xi,\chi$ is both syntactically closed and open.
\end{itemize}
For the second case, we apply the approximation rule for ${\diamdot}$ and get
$$\overline{\phi}\leq\overline{\psi}\ \&\ \overline{{\diamdot}\gamma}\leq\overline{\delta}\ \&\ \nomi_0\leq{\diamdot}\nomj\ \&\ \nomj\leq\xi\ \&\ \chi\leq\neg\nomi_1\  \Rightarrow\ \nomi_0\leq\neg\nomi_1.$$

Now in both cases, we have a quasi-inequality of the following form:

$$\overline{\phi}\leq\overline{\psi}\ \&\ \nomi\leq\alpha'\ \&\ \beta\leq\neg\nomi_1\ \&\ \mathsf{Pure}\  \Rightarrow\ \nomi_0\leq\neg\nomi_1$$
where 
\begin{itemize}
\item each $\phi\leq\psi$ is either $(\Omega, \epsilon)$-receiving or $(\Omega, \epsilon)$-solvable;
\item $+\alpha'$ and $-\beta$ are definite $(\Omega, \epsilon)$-inductive signed generation trees;
\item each of $\phi,\beta$ is syntactically closed and each of $\psi,\alpha'$ is syntactically open.
\end{itemize}

Now we can easily check that the following property holds for the quasi-inequality:\\

In the antecedent of the quasi-inequality, in each non-pure inequality, the left-hand side is syntactically closed and the right-hand side is syntactically open.\\

It is easy to check that for each rule in Stage 2, it does not break this property, so for each execution of the Ackermann rule, it is topologically correct, and after the execution of the Ackermann rule, it still satisfies the property stated above. Therefore, the execution of $\mathsf{ALBA}$ is topologically correct.
\end{proof}

\subsection{Topological Ackermann Lemmas}\label{Subsec:Topological:Ackermann:Lemmas}

In the present section we will prove the soundness of the algorithm $\mathsf{ALBA}$ with respect to admissible validity on Stone spaces with two relations. Indeed, similar to other semantic settings (see e.g.,\ \cite{CoPa12}), the soundness proof on the admissible valuation side goes similar to that of the arbitrary valuation side (i.e.,\ Theorem \ref{Thm:Soundness}), and for every rule except for the Ackermann rules, the proof goes without modification. Thus we will only focus on the Ackermann rules here, which are justified by the topological Ackermann lemmas given below. The proof is similar to \cite{CoPa12}, so we only state the lemmas without giving proof details of the lemmas.

For the Ackermann rules, the soundness proof with respect to arbitrary valuations is justified by the Ackermann lemmas (see Lemma \ref{Prop:Ackermann}). As discussed in the literature (e.g.,\ \cite[Section 9]{CoPa12}), the soundness proof of the Ackermann rules, namely the Ackermann lemmas, cannot be straightforwardly adapted to the setting of admissible valuations, since formulas in the expanded language might be interpreted as non-clopen subsets even if all the propositional variables are interpreted as clopen subsets in $X$. Thus by taking $a_0=\bigvee_{i}V(\theta_i)$, we cannot guarantee that $a_0$ is a clopen subset of $X$. Therefore, some adaptations are necessary based on the syntactic shape of the formulas, which are defined in the previous subsection.

Now we state the topological Ackermann lemmas without proofs. Similar proofs can be found in \cite[Section 9]{CoPa12}.

\begin{lemma}[Right-handed topological Ackermann lemma]\label{aRight:Ack} 
Let $\theta$ be a syntactically closed formula which does not contain $p$, let $\eta_i(p)$ (resp.\ $\iota_i(p)$) be syntactically closed (resp.\ open) and positive (resp.\ negative) in $p$ for $1\leq i\leq m$, and let $\vec q$ (resp.\ $\vec \nomj$) be all the propositional variables (resp.\ nominals) occurring in $\eta_1(p), \ldots, \eta_m(p)$, $\iota_1(p), \ldots, \iota_m(p), \theta$ other than $p$. Then for all $\vec a\in Clop(X), \vec x\in X$, the following are equivalent:
\begin{enumerate}
\item
$V(\eta_i(\theta/p))\subseteq V(\iota_i(\theta/p))$ for $1\leq i\leq m$,  where $V(\vec q)=\vec a$, $V(\vec \nomj)=\{\vec x\}$,
\item
there exists $a_0\in Clop(X)$ such that $V'(\theta)\leq a_0$ and $V'(\eta_i(p))\subseteq V'(\iota_i(p))$ for $1\leq i\leq m$,  where $V'$ is the same as $V$ except that $V'(p)=a_0$.
\end{enumerate}
\end{lemma}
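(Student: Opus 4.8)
The plan is the standard one for topological Ackermann lemmas: the implication $(2)\Rightarrow(1)$ is a routine monotonicity argument, while $(1)\Rightarrow(2)$ carries all the weight and is where the syntactic closedness/openness hypotheses are actually used. For $(2)\Rightarrow(1)$ I would argue as follows: suppose $a_0\in Clop(X)$ witnesses $(2)$ and set $V'=V^{p}_{a_0}$. Since $p$ does not occur in $\theta$ we have $V(\theta)=V'(\theta)\subseteq a_0$, so by compositionality of valuations and the fact that $\eta_i$ is positive and $\iota_i$ negative in $p$,
$$V(\eta_i(\theta/p))=V^{p}_{V(\theta)}(\eta_i(p))\subseteq V'(\eta_i(p))\subseteq V'(\iota_i(p))\subseteq V^{p}_{V(\theta)}(\iota_i(p))=V(\iota_i(\theta/p))$$
for each $i$, which is $(1)$.

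For $(1)\Rightarrow(2)$ the obstacle is that $V(\theta)$ --- which is a \emph{closed} subset of $X$, because $\theta$ is syntactically closed and $V$ is admissible --- need not be clopen, so the non-topological choice $a_0=V(\theta)$ of Lemma \ref{Lemma:Right:Ackermann} is unavailable. Instead I would work with the family $\mathcal{C}:=\{c\in Clop(X):V(\theta)\subseteq c\}$, which is downward directed (closed under finite intersections, and nonempty since $X\in\mathcal{C}$) and, $X$ being a Stone space and $V(\theta)$ closed, satisfies $\bigcap\mathcal{C}=V(\theta)$. The key ingredient is the \emph{intersection lemma}, proved by induction on formula complexity exactly as in \cite[Section 9]{CoPa12}, whose modal clauses invoke the closedness conditions on $R$ and $R'$ in Definition \ref{Def:StRR'} (so that $R[\cdot]$ and $R^{-1}[\cdot]$ preserve closed sets and $R'^{-1}[\cdot]$ and its dual behave well on clopens): for the admissible $V$ above,
$$\bigcap_{c\in\mathcal{C}}V^{p}_{c}(\eta_i(p))=V(\eta_i(\theta/p))\qquad\text{and}\qquad\bigcup_{c\in\mathcal{C}}V^{p}_{c}(\iota_i(p))=V(\iota_i(\theta/p)),$$
the first because $\eta_i$ is syntactically closed and positive in $p$, the second because $\iota_i$ is syntactically open and negative in $p$; moreover each $V^{p}_{c}(\eta_i(p))$ is closed and each $V^{p}_{c}(\iota_i(p))$ is open.

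Granting this, I would fix $i$ and observe that hypothesis $(1)$ says the closed set $\bigcap_{c}V^{p}_{c}(\eta_i(p))$ is contained in the open set $\bigcup_{c}V^{p}_{c}(\iota_i(p))$, hence the family $\{\,V^{p}_{c}(\eta_i(p))\cap(X\setminus V^{p}_{c}(\iota_i(p)))\,\}_{c\in\mathcal{C}}$ of closed subsets of $X$ has empty intersection. This family is downward directed (by monotonicity of $\eta_i$, antitonicity of $\iota_i$ and directedness of $\mathcal{C}$), so compactness of $X$ forces one of its members to be empty, i.e.\ there is $c_i\in\mathcal{C}$ with $V^{p}_{c_i}(\eta_i(p))\subseteq V^{p}_{c_i}(\iota_i(p))$. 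Taking $a_0:=c_1\cap\cdots\cap c_m\in\mathcal{C}$ and $V':=V^{p}_{a_0}$, the inclusions $a_0\subseteq c_i$ together with monotonicity/antitonicity in $p$ give $V'(\eta_i(p))\subseteq V^{p}_{c_i}(\eta_i(p))\subseteq V^{p}_{c_i}(\iota_i(p))\subseteq V'(\iota_i(p))$ for every $i$, while $V'(\theta)=V(\theta)\subseteq a_0$; this is precisely $(2)$.

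The hard part is the intersection lemma itself: one has to check, by the usual induction, that syntactic closedness (resp.\ openness) is genuinely transmitted through every connective and that the associated truth sets interact correctly with the directed family $\mathcal{C}$ --- the delicate clauses being those for $\Box,\Diamond,{\diamdot},\Diamondblack,{\boxdot},{\diamdotb}$, which is exactly where the topological structure of Definition \ref{Def:StRR'} is used. Since this induction runs entirely parallel to \cite[Section 9]{CoPa12}, I would (as the paper does) state the lemma and omit the verification, spelling out only the compactness argument above, which is the part that combines the pieces.
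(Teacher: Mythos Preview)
Your proposal is correct and is exactly the standard argument from \cite[Section 9]{CoPa12} that the paper defers to; in fact the paper gives no proof at all for this lemma, merely stating it and citing \cite{CoPa12} and \cite{PaSoZh15}. Your outline (monotonicity for $(2)\Rightarrow(1)$, and for $(1)\Rightarrow(2)$ the intersection lemma plus a compactness/directedness argument to extract a single clopen $a_0$) is precisely that standard argument, so there is nothing to compare.
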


\begin{lemma}[Left-handed topological Ackermann lemma]\label{aLeft:Ack}
Let $\theta$ be a syntactically open formula which does not contain $p$, let $\eta_i(p)$ (resp.\ $\iota_i(p)$) be syntactically closed (resp.\ open) and negative (resp.\ positive) in $p$ for $1\leq i\leq m$, and let $\vec q$ (resp.\ $\vec \nomj$) be all the propositional variables (resp.\ nominals) occurring in $\eta_1(p), \ldots, \eta_m(p)$, $\iota_1(p), \ldots, \iota_m(p), \theta$ other than $p$. Then for all $\vec a\in Clop(X), \vec x\in X$, the following are equivalent:
\begin{enumerate}
\item
$V(\eta_i(\theta/p))\subseteq V(\iota_i(\theta/p))$ for $1\leq i\leq m$,  where $V(\vec q)=\vec a$, $V(\vec \nomj)=\{\vec x\}$,
\item
there exists $a_0\in Clop(X)$ such that $a_0\leq V'(\theta)$ and $V'(\eta_i(p))\subseteq V'(\iota_i(p))$ for $1\leq i\leq m$,  where $V'$ is the same as $V$ except that $V'(p)=a_0$.
\end{enumerate}
\end{lemma}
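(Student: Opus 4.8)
The plan is to prove the two implications separately, noting that only the direction $(1)\Rightarrow(2)$ carries topological content while the converse is pure monotonicity. First I would record the polarity facts used throughout: since $\eta_i$ is negative in $p$ its interpretation $a\mapsto V(\eta_i(a/p))$ is antitone, and since $\iota_i$ is positive in $p$ the map $a\mapsto V(\iota_i(a/p))$ is monotone; moreover $\theta$ does not contain $p$, so $V'(\theta)=V(\theta)$. For $(2)\Rightarrow(1)$, given a clopen $a_0$ with $a_0\subseteq V(\theta)$ and $V'(\eta_i(p))\subseteq V'(\iota_i(p))$, antitonicity of $\eta_i$ gives $V(\eta_i(\theta/p))\subseteq V(\eta_i(a_0/p))=V'(\eta_i(p))$ and monotonicity of $\iota_i$ gives $V'(\iota_i(p))=V(\iota_i(a_0/p))\subseteq V(\iota_i(\theta/p))$; chaining through the hypothesis of $(2)$ yields $(1)$. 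This is exactly the argument used for the non-topological Lemma \ref{Prop:Ackermann} and needs nothing about clopenness.

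The real work is $(1)\Rightarrow(2)$, where I cannot simply take $a_0=V(\theta)$, since $\theta$ is only syntactically open and hence $V(\theta)$ is merely an \emph{open} (not clopen) subset of the Stone space $X$. The idea is to approximate $V(\theta)$ from below by clopens. Because $X$ is a Stone space its clopen sets form a basis, so the family $\mathcal{A}:=\{a\in Clop(X):a\subseteq V(\theta)\}$ is upward directed (closed under finite unions) with $\bigcup\mathcal{A}=V(\theta)$. I would then establish two continuity identities: since $\eta_i$ is syntactically closed and antitone, $V(\eta_i(\theta/p))=\bigcap_{a\in\mathcal{A}}V(\eta_i(a/p))$ (a directed intersection of closed sets), and since $\iota_i$ is syntactically open and monotone, $V(\iota_i(\theta/p))=\bigcup_{a\in\mathcal{A}}V(\iota_i(a/p))$ (a directed union of open sets). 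These are the analogues of the approximation lemmas of \cite[Section 9]{CoPa12}; they are proved by induction on formula complexity, and it is precisely here that the \emph{closedness of the relations} $R,R'$ from Definition \ref{Def:StRR'} is used to push the directed meets/joins through $\Box,\Diamond,{\boxdot},{\diamdot}$, together with the fact (Definition \ref{Def:Closed:Open:Formulas}) that syntactically closed/open formulas are interpreted as closed/open sets under admissible valuations.

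With these identities in hand, fix $i$ and set $K_a:=V(\eta_i(a/p))\setminus V(\iota_i(a/p))$ for $a\in\mathcal{A}$; each $K_a$ is closed (a closed set minus an open set), and by the monotonicity facts the family $\{K_a\}_{a\in\mathcal{A}}$ is downward directed. Computing, $\bigcap_{a}K_a\subseteq V(\eta_i(\theta/p))\cap(X\setminus V(\iota_i(\theta/p)))=\emptyset$ by hypothesis $(1)$; since $X$ is compact, a downward-directed family of closed sets with empty intersection must contain an empty member, so there is $a_i\in\mathcal{A}$ with $V(\eta_i(a_i/p))\subseteq V(\iota_i(a_i/p))$. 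To obtain a single witness for all $i$ simultaneously I would take $a_0:=a_1\cup\cdots\cup a_m\in\mathcal{A}$ (clopen, with $a_0\subseteq V(\theta)$); enlarging from $a_i$ to $a_0$ only shrinks each $V(\eta_i)$ and enlarges each $V(\iota_i)$, so all $m$ inclusions persist and $(2)$ holds. The main obstacle is the pair of continuity identities: establishing that substituting the directed clopen approximations of the open set $V(\theta)$ commutes with the interpretation of a syntactically closed (resp.\ open) formula to produce a directed meet (resp.\ join). Everything else is bookkeeping of polarities and a one-line compactness argument. The left-handed case differs from the right-handed one only in that $V(\theta)$ is approximated from below rather than from above, so the same scheme applies mutatis mutandis.
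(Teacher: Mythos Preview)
Your proposal is correct and is precisely the standard argument the paper defers to. The paper itself does not give a proof of this lemma: it explicitly states both topological Ackermann lemmas ``without proofs'' and refers to \cite[Section 9]{CoPa12} and \cite[Section B]{PaSoZh15}. What you have sketched---the monotonicity argument for $(2)\Rightarrow(1)$, and for $(1)\Rightarrow(2)$ the approximation of the open set $V(\theta)$ from below by the directed family of clopens, the Esakia-type continuity identities for syntactically closed/open formulas proved by induction on formula complexity, and the compactness step extracting a single clopen witness---is exactly the proof carried out in those references, adapted to the present two-relation signature.
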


The proof of the topological Ackermann lemmas is similar to \cite[Section B]{PaSoZh15}, and therefore is omitted. We only show how these lemmas justify the soundness of the topologically correct executions of the Ackermann rules.

\begin{proposition}
The topologically correct executions of the Ackermann rules are sound with respect to the admissible valuations.
\end{proposition}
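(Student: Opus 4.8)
The plan is to replay the proof of Proposition~\ref{Prop:Ackermann}, making only two changes: every valuation is now required to be \emph{admissible}, and the ordinary Ackermann Lemma~\ref{Lemma:Right:Ackermann} is replaced by the topological Ackermann lemmas (Lemmas~\ref{aRight:Ack} and~\ref{aLeft:Ack}). I treat the right-handed rule; the left-handed one is dual, using Lemma~\ref{aLeft:Ack} together with the observation that a finite meet of syntactically open formulas is again syntactically open.

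First, by the reduction discussed on page~\pageref{condition:1:4:equivalence} — read now with ``valuation'' meaning ``admissible valuation'' — it suffices to show that the system $S=\{\theta_1\leq p,\ldots,\theta_n\leq p,\ \eta_1\leq\iota_1,\ldots,\eta_m\leq\iota_m\}$ preceding an application of the right-handed Ackermann rule and the resulting system $S'$ (obtained by substituting $\theta:=\theta_1\lor\cdots\lor\theta_n$ for $p$ in the $\eta_i,\iota_i$) satisfy: every admissible $V$ with $X,V\Vdash S$ is matched by an admissible $V'$ with $X,V'\Vdash S'$ agreeing with $V$ on $\nomi_0,\nomi_1$, and conversely. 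The point where topological correctness enters is here: since the execution is topologically correct, in each non-pure inequality of $S$ the left-hand side is syntactically closed and the right-hand side syntactically open, so each $\theta_i$ and each $\eta_i$ is syntactically closed, each $\iota_i$ is syntactically open, and hence $\theta$ (a finite join of syntactically closed formulas) is syntactically closed. This is exactly the syntactic hypothesis required to apply Lemma~\ref{aRight:Ack}, with $\vec q$ (resp.\ $\vec\nomj$) the propositional variables (resp.\ nominals) other than $p$ occurring in the formulas involved.

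The forward direction is the easy half and uses only monotonicity: if $X,V\Vdash S$ then $V(\theta)\subseteq V(p)$ and $V(\eta_i)\subseteq V(\iota_i)$, and since $\eta_i$ is positive and $\iota_i$ negative in $p$ we get $V(\eta_i(\theta/p))\subseteq V(\eta_i(p))\subseteq V(\iota_i(p))\subseteq V(\iota_i(\theta/p))$, so $X,V\Vdash S'$ and $V'=V$ works. For the converse, given admissible $V'$ with $X,V'\Vdash S'$ I would invoke Lemma~\ref{aRight:Ack} to obtain some $a_0\in Clop(X)$ with $V'(\theta)\leq a_0$ and $V''(\eta_i(p))\subseteq V''(\iota_i(p))$ for all $i$, where $V'':=(V')^{p}_{a_0}$; since $p$ does not occur in the $\theta_j$ we have $V''(\theta_j)=V'(\theta_j)\subseteq V'(\theta)\leq a_0=V''(p)$, so $X,V''\Vdash\theta_j\leq p$ for all $j$, hence $X,V''\Vdash S$, and $V''$ agrees with $V'$ on $\nomi_0,\nomi_1$ because it differs from $V'$ only at $p$. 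The general case $n\geq 1$ reduces to $n=1$ by first rewriting $\{\theta_1\leq p,\ldots,\theta_n\leq p\}$ as the single inequality $\theta\leq p$. Combining the two directions yields $X\Vdash_{Clop}(\bigamp S\Rightarrow\nomi_0\leq\neg\nomi_1)$ iff $X\Vdash_{Clop}(\bigamp S'\Rightarrow\nomi_0\leq\neg\nomi_1)$, which is the asserted soundness.

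The one genuinely delicate point is the converse direction — the existence of a \emph{clopen} witness $a_0$ — and this is precisely the content of Lemmas~\ref{aRight:Ack} and~\ref{aLeft:Ack}, whose proofs (cf.\ \cite[Section~9]{CoPa12} and the argument in \cite[Section~B]{PaSoZh15}), where compactness of the canonical extension and the closedness/openness of the truth sets of syntactically closed/open formulas are actually used, carry the real weight. Within the present proposition, then, the main thing to verify is the bookkeeping above: that topologically correct executions (available on restricted inductive quasi-inequalities by the previous subsection) supply exactly the syntactic-shape hypotheses that the topological Ackermann lemmas demand.
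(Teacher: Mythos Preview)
Your proposal is correct and follows essentially the same route as the paper's proof: reduce to the right-handed case, invoke the topological Ackermann lemma (Lemma~\ref{aRight:Ack}) for the nontrivial direction, and observe that topological correctness supplies exactly the syntactic closed/open hypotheses the lemma needs. The only place where the paper is slightly more careful is in explicitly separating out the pure inequalities in the system (which need not be syntactically closed on the left and open on the right, but are trivially preserved since they do not contain $p$); you gesture at the non-pure restriction but then assert the syntactic shape for \emph{all} $\eta_i,\iota_i$, so you should either exclude the pure ones from the list fed to Lemma~\ref{aRight:Ack} and handle them separately, as the paper does, or note that they pass through unchanged under the $p$-variant.
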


\begin{proof}
Here we prove it for the right-handed Ackermann lemma, and the left-handed Ackermann lemma is similar. Without loss of generality we assume that $n=1$, then for a topological correct execution of this rule, the system $S$ before execution is of the following shape:
\begin{center}
$\left\{ \begin{array}{ll}
\theta\leq p \\
\eta_1\leq\iota_1\\
\vdots\\
\eta_m\leq\iota_m\\
\mathsf{Pure}
\end{array} \right.$ 
\end{center}
where:
\begin{enumerate}
\item $p$ does not occur in $\theta$ and $\theta$ is syntactically closed;
\item Each $\eta_i$ is syntactically closed and positive in $p$, and each $\iota_i$ is syntactically open and negative in $p$, for $1\leq i\leq m$;
\item $\mathsf{Pure}$ is pure;
\end{enumerate}

the system $S'$ after the execution is of the following shape:
\begin{center}
$\left\{ \begin{array}{ll}
\eta_1(\theta/p)\leq\iota_1(\theta/p) \\
\vdots\\
\eta_m(\theta/p)\leq\iota_m(\theta/p) \\
\mathsf{Pure}
\end{array} \right.$
\end{center}
Similar to the discussion of page \pageref{condition:1:4:equivalence}, it suffices to show the following:

\begin{enumerate}
\item For any $X$, any admissible valuation $V_0$, if $X,V_0\Vdash S$, then there is an admissible valuation $V_1$ such that $V_1(\nomi_0)=V_0(\nomi_0)$, $V_1(\nomi_1)=V_0(\nomi_1)$ and $X,V_1\Vdash S'$;

\item For any $X$, any admissible valuation $V_1$, if $X,V_1\Vdash S'$, then there is an admissible valuation $V_0$ such that $V_0(\nomi_0)=V_1(\nomi_0)$, $V_0(\nomi_1)=V_1(\nomi_1)$ and $X,V_0\Vdash S$.
\end{enumerate}

\begin{enumerate}
\item[Proof of 1.]Consider any $X$ and admissible valuation $V_0$ on $X$. If $X,V_0\Vdash S$, then take $a_0=V_0(p)$, we have:
\begin{center}
$\left\{ \begin{array}{ll}
V_0(\theta)\leq V_0(p)=a_0 \\
V_0(\eta_i)\leq V_0(\iota_i)\mbox{ for }1\leq i\leq m\\
\mathsf{Pure}\mbox{ is true under }V_0,
\end{array} \right.$ 
\end{center}
so condition 2. holds, by the right-handed Ackermann lemma, condition 1 holds, so $V_1(\eta_i(\theta/p))\subseteq V_1(\iota_i(\theta/p))$ for $1\leq i\leq m$ and some admissible valuation $V_1$ which differs from $V_0$ at most at $p$. It is easy to see that $\mathsf{Pure}$ is true under $V_0$ iff it is true under $V_1$, so $X,V_1\Vdash S'$;

\item[Proof of 2.]Consider any $X$ and admissible valuation $V_1$ on $X$. If $X,V_1\Vdash S'$, then we have:
\begin{center}
$\left\{ \begin{array}{ll}
V_1(\eta_i(\theta/p))\leq V_1(\iota_i(\theta/p))\mbox{ for }1\leq i\leq m\\
\mathsf{Pure}\mbox{ is true under }V_1,
\end{array} \right.$
\end{center}
so condition 1. holds, by the right-handed Ackermann lemma, condition 2 holds, so there exists $a_0\in Clop(X)$ such that $V_0(\theta)\leq a_0$ and $V_0(\eta_i(p))\subseteq V_0(\iota_i(p))$ for $1\leq i\leq m$,  where $V_0$ is the same as $V_1$ except that $V_0(p)=a_0$. It is easy to see that  $\mathsf{Pure}$ is true under $V_1$ iff it is true under $V_0$,
\begin{center}
$\left\{ \begin{array}{ll}
V_0(\theta)\leq V_0(p)=a_0 \\
V_0(\eta_i)\leq V_0(\iota_i)\mbox{ for }1\leq i\leq m\\
\mathsf{Pure}\mbox{ is true under }V_0,
\end{array} \right.$ 
\end{center}
so  $X,V_0\Vdash S$.
\end{enumerate}
\end{proof}

\section{Examples}\label{Sec:Examples}
\begin{example}
Given a modal subordination algebra $(B,\prec,\Diamond)$, it is proximity preserving iff its dual Stone space with two relations $(X,\tau,R,R')$ satisfies the following interaction condition:
$$\forall w(R[R'^{-1}(w)]\subseteq R'^{-1}[R(w)]),\mbox{ i.e.\ }$$
$$\mbox{for all }u,v,w\in X\mbox{ such that }vR'w\mbox{ and }vRu,\mbox{ there exists a }t\in X\mbox{ such that }wRt\mbox{ and }uR't.$$
\end{example}

\begin{proof}
It suffices to show that $$(B,\prec,\Diamond)\Vdash(a\prec b)\Rightarrow(\Diamond a\prec\Diamond b)$$ iff its dual $$(X,\tau,R,R')\vDash\forall w(R[R'^{-1}(w)]\subseteq R'^{-1}[R(w)]).$$
Indeed,
\begin{center}
\begin{tabular}{c l}
 &$(B,\prec,\Diamond)\Vdash(a\prec b)\Rightarrow(\Diamond a\prec\Diamond b)$\\
iff &$(X,\tau,R,R')\Vdash_{Clop}\forall a\forall b((a\prec b)\Rightarrow(\Diamond a\prec\Diamond b))$\\
iff &$(X,\tau,R,R')\Vdash_{Clop}\forall a\forall b({\diamdot}a\leq b\Rightarrow {\diamdot}\Diamond a\leq\Diamond b)$\\
iff &$(X,\tau,R,R')\Vdash_{Clop}\forall a\forall b\forall\nomi\forall\nomj({\diamdot}a\leq b\ \&\ \nomi\leq{\diamdot}\Diamond a\ \&\ \Diamond b\leq\neg\nomj\ \Rightarrow\ \nomi\leq\neg\nomj)$\\
iff &$(X,\tau,R,R')\Vdash_{Clop}\forall a\forall b\forall\nomi\forall\nomj\forall\nomk({\diamdot}a\leq b\ \&\ \nomi\leq{\diamdot}\Diamond \nomk\ \&\ \nomk\leq a\ \&\ \Diamond b\leq\neg\nomj\ \Rightarrow\ \nomi\leq\neg\nomj)$\\
iff &$(X,\tau,R,R')\Vdash_{Clop}\forall b\forall\nomi\forall\nomj\forall\nomk({\diamdot}\nomk\leq b\ \&\ \nomi\leq{\diamdot}\Diamond \nomk\ \&\ \Diamond b\leq\neg\nomj\ \Rightarrow\ \nomi\leq\neg\nomj)$\\
iff &$(X,\tau,R,R')\Vdash_{Clop}\forall\nomi\forall\nomj\forall \nomk(\nomi\leq{\diamdot}\Diamond \nomk\ \&\ \Diamond {\diamdot}\nomk\leq\neg\nomj\ \Rightarrow\ \nomi\leq\neg\nomj)$\\
iff &$(X,\tau,R,R')\Vdash_{Clop}\forall \nomk({\diamdot}\Diamond\nomk\leq\Diamond {\diamdot}\nomk)$\\
iff &$(X,\tau,R,R')\vDash\forall w(R[R'^{-1}(w)]\subseteq R'^{-1}[R(w)])$.
\end{tabular}
\end{center}
\end{proof}

\begin{example}[See Lemma 2.1.12 in \cite{Sa16}]
Given a modal subordination algebra $(B,\prec,\Diamond)$ and its dual Stone space with two relations $(X,\tau,R,R')$,
\begin{enumerate}
\item $(B,\prec,\Diamond)\Vdash\forall a\forall b(a\prec b\Rightarrow a\leq b)$ iff $R$ is reflexive;
\item $(B,\prec,\Diamond)\Vdash\forall a\forall b(a\prec b\Rightarrow \neg b\prec\neg a)$ iff $R$ is symmetric.
\end{enumerate}
\end{example}

\begin{proof}
\begin{enumerate}
\item 
It suffices to show that $$(B,\prec,\Diamond)\Vdash a\prec b\Rightarrow a\leq b$$ iff its dual $$(X,\tau,R,R')\vDash\forall wRww.$$
Indeed,
\begin{center}
\begin{tabular}{c l}
 &$(B,\prec,\Diamond)\Vdash a\prec b\ \Rightarrow\ a\leq b$\\
iff &$(X,\tau,R,R')\Vdash_{Clop}\forall a\forall b(a\prec b\ \Rightarrow\ a\leq b)$\\
iff &$(X,\tau,R,R')\Vdash_{Clop}\forall a\forall b({\diamdot}a\leq b\ \Rightarrow\ a\leq b)$\\
iff &$(X,\tau,R,R')\Vdash_{Clop}\forall a\forall b\forall\nomi\forall\nomj({\diamdot}a\leq b\ \&\ \nomi\leq a\ \&\ b\leq\neg\nomj\ \Rightarrow\ \nomi\leq\neg\nomj)$\\
iff &$(X,\tau,R,R')\Vdash_{Clop}\forall b\forall\nomi\forall\nomj({\diamdot}\nomi\leq b\ \&\ b\leq\neg\nomj\ \Rightarrow\ \nomi\leq\neg\nomj)$\\
iff &$(X,\tau,R,R')\Vdash_{Clop}\forall\nomi\forall\nomj({\diamdot}\nomi\leq\neg\nomj\ \Rightarrow\ \nomi\leq\neg\nomj)$\\
iff &$(X,\tau,R,R')\Vdash_{Clop}\forall\nomi(\nomi\leq{\diamdot}\nomi)$\\
iff &$(X,\tau,R,R')\vDash\forall w(w\in R[w])$\\
iff &$(X,\tau,R,R')\vDash\forall wRww$.
\end{tabular}
\end{center}
\item 
It suffices to show that $$(B,\prec,\Diamond)\Vdash \forall a\forall b(a\prec b\Rightarrow \neg b\prec\neg a)$$ iff its dual $$(X,\tau,R,R')\vDash\forall w\forall v(Rvw\to Rwv).$$
Indeed,
\begin{center}
\begin{tabular}{c l}
 &$(B,\prec,\Diamond)\Vdash a\prec b\ \Rightarrow\ \neg b\prec\neg a$\\
iff &$(X,\tau,R,R')\Vdash_{Clop}\forall a\forall b(a\prec b\ \Rightarrow\ \neg b\prec\neg a)$\\
iff &$(X,\tau,R,R')\Vdash_{Clop}\forall a\forall b({\diamdot}a\leq b\ \Rightarrow\ {\diamdot}\neg b\leq\neg a)$\\
iff &$(X,\tau,R,R')\Vdash_{Clop}\forall a\forall b\forall\nomi\forall\nomj({\diamdot}a\leq b\ \&\ \nomi\leq {\diamdot}\neg b\ \&\ \neg a\leq\neg\nomj\ \Rightarrow\ \nomi\leq\neg\nomj)$\\
iff &$(X,\tau,R,R')\Vdash_{Clop}\forall a\forall b\forall\nomi\forall\nomj\forall\nomk({\diamdot}a\leq b\ \&\ \nomi\leq {\diamdot}\nomk\ \&\ \nomk\leq\neg b\ \&\ \neg a\leq\neg\nomj\ \Rightarrow\ \nomi\leq\neg\nomj)$\\
iff &$(X,\tau,R,R')\Vdash_{Clop}\forall a\forall b\forall\nomi\forall\nomj\forall\nomk({\diamdot}a\leq b\ \&\ \nomi\leq {\diamdot}\nomk\ \&\ \nomk\leq\neg b\ \&\ \nomj\leq a\ \Rightarrow\ \nomi\leq\neg\nomj)$\\
iff &$(X,\tau,R,R')\Vdash_{Clop}\forall b\forall\nomi\forall\nomj\forall\nomk({\diamdot}\nomj\leq b\ \&\ \nomi\leq {\diamdot}\nomk\ \&\ \nomk\leq\neg b\ \Rightarrow\ \nomi\leq\neg\nomj)$\\
iff &$(X,\tau,R,R')\Vdash_{Clop}\forall\nomi\forall\nomj\forall\nomk(\nomi\leq {\diamdot}\nomk\ \&\ \nomk\leq\neg {\diamdot}\nomj\ \Rightarrow\ \nomi\leq\neg\nomj)$\\
iff &$(X,\tau,R,R')\Vdash_{Clop}\forall\nomi\forall\nomj(\nomi\leq {\diamdot}\neg{\diamdot}\nomj\ \Rightarrow\ \nomi\leq\neg\nomj)$\\
iff &$(X,\tau,R,R')\Vdash_{Clop}\forall\nomj({\diamdot}\neg{\diamdot}\nomj\leq\neg\nomj)$\\
iff &$(X,\tau,R,R')\Vdash_{Clop}\forall\nomj(\nomj\leq\neg{\diamdot}\neg{\diamdot}\nomj)$\\
iff &$(X,\tau,R,R')\Vdash_{Clop}\forall\nomj(\nomj\leq{\boxdot}{\diamdot}\nomj)$\\
iff &$(X,\tau,R,R')\Vdash_{Clop}\forall\nomj({\diamdotb}\nomj\leq{\diamdot}\nomj)$\\
iff &$(X,\tau,R,R')\vDash\forall w(R^{-1}[w]\subseteq R[w])$\\
iff &$(X,\tau,R,R')\vDash\forall w\forall v(Rvw\to Rwv)$.
\end{tabular}
\end{center}
\end{enumerate}
\end{proof}

\section*{Part II: Correspondence and Canonicity for $\Pi_2$-statements}\label{Section:Pi_2}

In this part, we focus on \emph{$\Pi_2$-statements} of the form $\overline{\phi}\triangleleft\overline{\psi}\  \Rightarrow\ \exists\vec{q}(\overline{\gamma}\triangleleft\overline{\delta})$,\footnote{Here we call it a $\Pi_2$-statement is because the statement is implicitly universally quantified when talking about its validity, therefore it is of the $\forall\exists$-form.} and develop their correspondence and canonicity theory. We will define inductive $\Pi_2$-statements and restricted inductive $\Pi_2$-statements (Section \ref{Subsec:Inductive:Pi_2}), give a version of the algorithm $\mathsf{ALBA}^{\Pi_2}$ for $\Pi_2$-statements (Section \ref{SubSec:ALBA:Pi_2}), state its soundness with respect to arbitrary valuations (Section \ref{Subsec:Soundness:Pi_2}), success on inductive and restricted inductive $\Pi_2$-statements (Section \ref{Subsec:Success:Pi_2}), and canonicity of restricted inductive $\Pi_2$-statements (Section \ref{Subsec:Canonicity:Pi_2}).

\section{Syntax and Semantics}\label{Subsec:Syn:Sem:Pi_2}

For $\Pi_2$-statements, the special feature of its syntax is the propositional quantifiers of the form $\exists q$. The semantics of the propositional quantifiers is already different between admissible valuations and arbitrary valuations. The semantics of propositional quantifiers are given as follows: for any Stone space with two relations $X$,

\begin{itemize}
\item for any admissible valuation $V_0$, $(X,V_0)\Vdash\exists q\exists\vec{q}(\overline{\gamma}\triangleleft\overline{\delta})$ iff there exist $A\in Clop(X)$ such that $X,(V_0)^{q}_{A}\Vdash\exists\vec{q}(\overline{\gamma}\triangleleft\overline{\delta})$;
\item for any arbitrary valuation $V_1$, $(X,V_1)\Vdash\exists q\exists\vec{q}(\overline{\gamma}\triangleleft\overline{\delta})$ iff there exist $A\in P(X)$ such that $X,(V_1)^{q}_{A}\Vdash\exists\vec{q}(\overline{\gamma}\triangleleft\overline{\delta})$.
\end{itemize}

For the semantics of $\Pi_2$-statements, the global satisfaction relation for $\Pi_2$-statements is defined differently between admissible valuations and arbitrary valuations. It is given as follows:

\begin{definition}
Given a $\Pi_2$-statement $\overline{\phi}\triangleleft\overline{\psi}\  \Rightarrow\ \exists\vec{q}(\overline{\gamma}\triangleleft\overline{\delta})$, a Stone space with two relations $X$, an admissible valuation $V_0$, an arbitrary valuation $V_1$, 

\begin{itemize}
\item $X,V_0\Vdash\overline{\phi}\triangleleft\overline{\psi}\  \Rightarrow\ \exists\vec{q}(\overline{\gamma}\triangleleft\overline{\delta})$ iff whenever $X,V_0\Vdash\overline{\phi}\triangleleft\overline{\psi}$, there exist $\vec{A}\in Clop(X)$ such that $X,(V_0)^{\vec{q}}_{\vec{A}}\Vdash\overline{\gamma}\triangleleft\overline{\delta}$;

\item $X,V_1\Vdash\overline{\phi}\triangleleft\overline{\psi}\  \Rightarrow\ \exists\vec{q}(\overline{\gamma}\triangleleft\overline{\delta})$ iff whenever $X,V_1\Vdash\overline{\phi}\triangleleft\overline{\psi}$, there exist $\vec{A}\in P(X)$ such that $X,(V_1)^{\vec{q}}_{\vec{A}}\Vdash\overline{\gamma}\triangleleft\overline{\delta}$.

\item $\overline{\phi}\triangleleft\overline{\psi}\  \Rightarrow\ \exists\vec{q}(\overline{\gamma}\triangleleft\overline{\delta})$ is admissibly valid in $X$, i.e.\ $X\Vdash_{Clop}\overline{\phi}\triangleleft\overline{\psi}\  \Rightarrow\ \exists\vec{q}(\overline{\gamma}\triangleleft\overline{\delta})$ iff $X,V_0\Vdash\overline{\phi}\triangleleft\overline{\psi}\  \Rightarrow\ \exists\vec{q}(\overline{\gamma}\triangleleft\overline{\delta})$ for all admissible valuations $V_0$;

\item $\overline{\phi}\triangleleft\overline{\psi}\  \Rightarrow\ \exists\vec{q}(\overline{\gamma}\triangleleft\overline{\delta})$ is valid in $X$, i.e.\ $X\Vdash_{P}\overline{\phi}\triangleleft\overline{\psi}\  \Rightarrow\ \exists\vec{q}(\overline{\gamma}\triangleleft\overline{\delta})$ iff $X,V_1\Vdash\overline{\phi}\triangleleft\overline{\psi}\  \Rightarrow\ \exists\vec{q}(\overline{\gamma}\triangleleft\overline{\delta})$ for all arbitrary valuations $V_1$.
\end{itemize}
\end{definition}

\section{Inductive and Restricted Inductive $\Pi_2$-Statements}\label{Subsec:Inductive:Pi_2}

In the present section, we define inductive and restricted inductive $\Pi_2$-statements of the shape $$\overline{\phi}\leq\overline{\psi}\ \&\ \overline{\chi}\prec\overline{\xi}\Rightarrow\exists\vec{q}(\overline{\alpha}\leq\overline{\beta}\ \&\ \overline{\gamma}\prec\overline{\delta})$$

where each of $\phi,\psi,\chi,\xi,\alpha,\beta,\gamma,\delta$ has certain syntactic restrictions.

\subsection{Intuitive ideas behind the definition}\label{SubSubSec:Intuition:Pi_2}
\begin{enumerate}
\item The basic idea behind defining inductive $\Pi_2$-statements is as follows: We divide the algorithm into two parts, the first part of the algorithm equivalently transforms $\exists\vec{q}(\overline{\alpha}\leq\overline{\beta}\ \&\ \overline{\gamma}\prec\overline{\delta})$ part into a meta-conjunction of inequalities $\mathsf{MetaConIneq}$, such that 
$$\overline{\phi}\leq\overline{\psi}\ \&\ \overline{\chi}\prec\overline{\xi}\Rightarrow\ \mathsf{MetaConIneq}$$
is an inductive quasi-inequality, and we can then use the second part of the algorithm (which is the same as the algorithm $\mathsf{ALBA}$ for quasi-inequalities) to transform it into a set of pure quasi-inequalities. The first part of the algorithm eliminate all the existential propositional quantifiers by application of Ackermann rules, but this time we do not have nominals in the elimination of existential propositional quantifiers like in the elimination of the (implicit) universal propositional quantifiers (which is what will be done in the second part of the algorithm).

\item For the elimination of the existential propositional quantifiers, we again divide the set of inequalities in $$\exists\vec{q}(\overline{\alpha}\leq\overline{\beta}\ \&\ \overline{\gamma}\prec\overline{\delta})$$ into two parts:

\begin{itemize}
\item One part is the solvable inequalities, which will finally be transformed into inequalities of the form $\theta\leq q$ or $q\leq\theta$. The solvable inequalities will be used to compute the minimal valuations of the propositional variables in $\exists\vec{q}$.
\item The other part is the receiving inequalities, which will be used to receive minimal valuations.
\end{itemize}

\item For the canonicity proof, we again need to guarantee certain syntactic topological properties, which will be further explained in the canonicity proof. These restrictions lead to the definition of restricted inductive $\Pi_2$-statements.
\end{enumerate}

\subsection{The Definition of Inductive $\Pi_2$-Statements}

In this section, we will define inductive $\Pi_2$-statements in the language without black connectives $\Diamondblack,\blacksquare,{\boxdotb},{\diamdotb}$ or nominals, which will be shown to have first-order correspondents over arbitrary valuations on Stone spaces with two relations. This definition will not be given in a syntactic description, for a syntactic definition we will leave to future work.

Before defining inductive $\Pi_2$-statements, we first define the so-called first-round good $\exists$-statements of the form $\exists\vec{q}(\overline{\alpha}\leq\overline{\beta}\ \&\ \overline{\gamma}\prec\overline{\delta})$, which are the ones that can be equivalently transformed into a ``good'' meta-conjunction of inequalities without existential propositional quantifiers:

\begin{definition}[$(\Omega,\epsilon_{\vec p})$-First-Round Good $\exists$-Statements]\label{Def:Exists:Statements}
Given an order-type $\epsilon_{\vec p}$ for $\vec p$, and a dependence order $<_{\Omega}$ on $\vec p$, an $\exists$-statement $\exists\vec{q}(\overline{\alpha}\leq\overline{\beta}\ \&\ \overline{\gamma}\prec\overline{\delta})$ is $(\Omega,\epsilon_{\vec p})$-first-round good, if by the first part of the algorithm $\mathsf{ALBA}^{\Pi_2}$ defined in Section \ref{SubSec:ALBA:Pi_2}, it can be transformed into a meta-conjunction of inequalities $\mathsf{MetaConIneq}$, each inequality in which is $(\Omega,\epsilon_{\vec p})$-inductive (i.e.\ the positive tree of the left-hand side and the negative tree of the right-hand side are both $(\Omega,\epsilon_{\vec p})$-inductive, which might contain black connectives) for the same $(\Omega,\epsilon_{\vec p})$.
\end{definition}

\begin{definition}[Inductive $\Pi_2$-Statements]\label{Def:Pi_2:Inductive}
Given an order-type $\epsilon_{\vec p}$ for $\vec p$, and a dependence order $<_{\Omega}$ on $\vec p$, a $\Pi_2$-statement $$\overline{\phi}\leq\overline{\psi}\ \&\ \overline{\chi}\prec\overline{\xi}\Rightarrow\exists\vec{q}(\overline{\alpha}\leq\overline{\beta}\ \&\ \overline{\gamma}\prec\overline{\delta})$$ is $(\Omega,\epsilon_{\vec p})$-inductive, if
\begin{itemize}
\item it does not contain nominals or black connectives $\Diamondblack,\blacksquare,{\boxdotb},{\diamdotb}$;
\item $\exists\vec{q}(\overline{\alpha}\leq\overline{\beta}\ \&\ \overline{\gamma}\prec\overline{\delta})$ is $(\Omega,\epsilon_{\vec p})$-first-round good;
\item $\overline{\phi}\leq\overline{\psi}\ \&\ \overline{\chi}\prec\overline{\xi}\Rightarrow\mathsf{MetaConIneq}$ is an $(\Omega,\epsilon_{\vec p})$-inductive quasi-inequality (which might contain black connectives), where $\mathsf{MetaConIneq}$ is defined as in Definition \ref{Def:Exists:Statements}.
\end{itemize}
A $\Pi_2$-statement is inductive, if it is $(\Omega,\epsilon_{\vec p})$-inductive for some $\epsilon_{\vec p}$ and $<_\Omega$.
\end{definition}

\subsection{The Definition of Restricted Inductive $\Pi_2$-Statements}

In this section, we will define restricted inductive $\Pi_2$-statements, which will be shown to be canonical. For this class of $\Pi_2$-statements, we will give a syntactic description.

Again, before defining restricted inductive $\Pi_2$-statements, we first defined the restricted first-round good $\exists$-statements.

\begin{definition}[$(\Omega,\epsilon_{\vec q})$-Restricted First-Round Good $\exists$-Statements]\label{Def:Exists:Statements}
Given order-types $\epsilon_{\vec p}$ and $\epsilon_{\vec q}$ for $\vec p$ and $\vec q$, and a dependence order $<_{\Omega}$ on $\vec p$ and $\vec q$ such that $p<_{\Omega}q$ for all $p$ in $\vec p$ and $q$ in $\vec q$, an $\exists$-statement $\exists\vec{q}(\overline{\alpha}\leq\overline{\beta}\ \&\ \overline{\gamma}\prec\overline{\delta})$ is $(\Omega,\epsilon_{\vec q})$-restricted first-round good, if inequalities in $\overline{\alpha}\leq\overline{\beta}\ \&\ \overline{\gamma}\prec\overline{\delta}$ can be divided into the following two kinds:

\begin{itemize}
\item $\theta\triangleleft\eta$, which we call $(\Omega,\epsilon_{\vec q})$-restricted receiving inequality, where 
\begin{itemize}
\item all branches in $-\theta$ and $+\eta$ ending with propositional variables in $\vec q$ are $\epsilon_{\vec q}^{\partial}$-critical (i.e.\ in the first half of the algorithm, the inequality $\theta\triangleleft\eta$ is a ``receiving'' inequality);
\item all branches of $+\theta$ and $-\eta$ are Skeleton branches (i.e.\ in the second half of the algorithm, their branches behave like Skeleton branches);
\end{itemize}
\item $\iota\triangleleft\zeta$, which we call $(\Omega,\epsilon_{\vec q})$-restricted solvable inequality, where 
\begin{itemize}
\item exactly one of $-\iota,+\zeta$ is such that all branches ending with propositional variables in $\vec q$ are $\epsilon_{\vec q}^{\partial}$-critical (without loss of generality we denote this generation tree $\star\rho$ and the other one $*\kappa$), and the other contains $\epsilon_{\vec q}$-critical branches;
\item $*\kappa$ is $(\Omega, \epsilon_{\vec q})$-inductive, and all $\epsilon_{\vec q}$-critical branches in $*\kappa$ are PIA branches;
\item for all the $\epsilon_{\vec q}$-critical branches in $*\kappa$ ending with $q$, all propositional variables $r$ in $\star\rho$, we have $r<_{\Omega} q$.
\item for every branch in $-\iota$ and $+\zeta$ which is not $\epsilon_{\vec q}$-critical, it is a Skeleton branch in $+\iota$ and $-\zeta$.
\end{itemize}
\end{itemize}
\end{definition}

The basic idea behind the ``solvable inequality'' is that after the application of residuation rules exhaustively, the resulting inequality is of the form $\upsilon\leq r$ (when $\epsilon(r)=1$) or of the form $r\leq\upsilon$ (when $\epsilon(r)=\partial$) where all branches in $+\upsilon$ (resp.\ $-\upsilon$) ending with $p$ in $\vec p$ is a Skeleton branch. 

\begin{definition}[Restricted Inductive $\Pi_2$-Statements]\label{Def:Pi_2:Restricted:Inductive}
Given order-types $\epsilon_{\vec p}$, $\epsilon_{\vec q}$ for $\vec p$, $\vec q$, and a dependence order $<_{\Omega}$ on $\vec p$ and $\vec q$ such that $p<_{\Omega}q$ for all $p$ in $\vec p$ and $q$ in $\vec q$, a $\Pi_2$-statement $$\overline{\phi}\leq\overline{\psi}\ \&\ \overline{\chi}\prec\overline{\xi}\Rightarrow\exists\vec{q}(\overline{\alpha}\leq\overline{\beta}\ \&\ \overline{\gamma}\prec\overline{\delta})$$ is $(\Omega,\epsilon_{\vec p})$-restricted inductive,
\begin{itemize}
\item it does not contain nominals or black connectives $\Diamondblack,\blacksquare,{\boxdotb},{\diamdotb}$ or ${\diamdot},{\boxdot}$;
\item $\exists\vec{q}(\overline{\alpha}\leq\overline{\beta}\ \&\ \overline{\gamma}\prec\overline{\delta})$ is $(\Omega,\epsilon_{\vec q})$-restricted first-round good;
\item each inequality in $\phi\leq\psi$ and $\chi\prec\xi$ is either $(\Omega,\epsilon_{\vec p})$-receiving or $(\Omega,\epsilon_{\vec p})$-solvable.
\end{itemize}
A $\Pi_2$-statement is restricted inductive, if it is $(\Omega,\epsilon_{\vec p})$-restricted inductive for some $\epsilon_{\vec p}$ and $<_{\Omega}$.
\end{definition}

\section{The Algorithm $\mathsf{ALBA}^{\Pi_2}$}\label{SubSec:ALBA:Pi_2}

The algorithm $\mathsf{ALBA}^{\Pi_2}$ transforms the input $\Pi_2$-statement $$\overline{\phi}\leq\overline{\psi}\ \&\ \overline{\chi}\prec\overline{\xi}\Rightarrow\exists\vec{q}(\overline{\alpha}\leq\overline{\beta}\ \&\ \overline{\gamma}\prec\overline{\delta})$$ into an equivalent set of pure quasi-inequalities which does not contain occurrences of propositional variables or propositional quantifiers, and therefore can be translated into the first-order correspondence language via the standard translation of the expanded language (see page \pageref{Sub:FOL:ST}). The language on which we manipulate the algorithm is almost the same as the algorithm for quasi-inequalities, except that we have additional existential propositional quantifiers $\exists q$.

Now we define the algorithm in two halves, the first half aims at reducing $$\exists\vec{q}(\overline{\alpha}\leq\overline{\beta}\ \&\ \overline{\gamma}\prec\overline{\delta})$$ into a meta-conjunction of inequalities, and the second half is just the algorithm $\mathsf{ALBA}$ for quasi-inequalities.

\paragraph{First Half}
The algorithm receives a $\Pi_2$-statement $$\overline{\phi}\leq\overline{\psi}\ \&\ \overline{\chi}\prec\overline{\xi}\Rightarrow\exists\vec{q}(\overline{\alpha}\leq\overline{\beta}\ \&\ \overline{\gamma}\prec\overline{\delta})$$ as input. The first half of the algorithm operates on the $\exists$-statement $\exists\vec{q}(\overline{\alpha}\leq\overline{\beta}\ \&\ \overline{\gamma}\prec\overline{\delta})$, and executes in three stages:

\begin{enumerate}

\item \textbf{Preprocessing}\footnote{Notice that here we do not have first-approximation anymore.}:

\begin{enumerate}

\item In each inequality $\theta\triangleleft\eta$ (where $\triangleleft\in\{\leq,\prec\}$) in the $\exists$-statement $\exists\vec{q}(\overline{\alpha}\leq\overline{\beta}\ \&\ \overline{\gamma}\prec\overline{\delta})$, consider the signed generation trees $+\theta$ and $-\eta$, apply the distribution rules:

\begin{enumerate}
\item Push down $+\Diamond,+{\diamdot}, -\neg, +\land, -\to$ by distributing them over nodes labelled with $+\lor$ which are Skeleton nodes (see Figure \ref{Figure:distribution:rules} on page \pageref{Figure:distribution:rules}), and

\item Push down $-\Box,-{\boxdot},+\neg, -\lor, -\to$ by distributing them over nodes labelled with $-\land$ which are Skeleton nodes (see Figure \ref{Figure:distribution:rules:2} on page \pageref{Figure:distribution:rules:2}).

\end{enumerate}

\item Apply the splitting rules to each inequality occurring in the $\exists$-statement:

$$\infer{\theta\leq\eta\ \&\ \theta\leq\iota}{\theta\leq\eta\land\iota}
\qquad
\infer{\theta\leq\iota\ \&\ \eta\leq\iota}{\theta\lor\eta\leq\iota}
$$

$$\infer{\theta\prec\eta\ \&\ \theta\prec\iota}{\theta\prec\eta\land\iota}
\qquad
\infer{\theta\prec\iota\ \&\ \eta\prec\iota}{\theta\lor\eta\prec\iota}
$$

\item Apply the monotone and antitone variable-elimination rules for variables in $\vec q$ to the whole $\exists$-statement:

$$\infer{\exists\vec{q}(\overline{\alpha}(\bot)\leq\overline{\beta}(\bot)\ \&\ \overline{\gamma}(\bot)\prec\overline{\delta}(\bot))}{\exists q\exists\vec{q}(\overline{\alpha}(q)\leq\overline{\beta}(q)\ \&\ \overline{\gamma}(q)\prec\overline{\delta}(q))}$$

if $q$ is positive in $\overline{\alpha},\overline{\gamma}$ and negative in $\overline{\beta},\overline{\delta}$;

$$\infer{\exists\vec{q}(\overline{\alpha}(\top)\leq\overline{\beta}(\top)\ \&\ \overline{\gamma}(\top)\prec\overline{\delta}(\top))}{\exists q\exists\vec{q}(\overline{\alpha}(q)\leq\overline{\beta}(q)\ \&\ \overline{\gamma}(q)\prec\overline{\delta}(q))}$$

if $q$ is negative in $\overline{\alpha},\overline{\gamma}$ and positive in $\overline{\beta},\overline{\delta}$;

\item Apply the subordination rewritting rule to each inequality with $\prec$ in order to turn it into $\leq$:
$$\infer{{\diamdot}\theta\leq\eta}{\theta\prec\eta}$$
\end{enumerate}

Now we have a $\exists$-statement of the form $\exists\vec{q}(\overline{\alpha}\leq\overline{\beta})$.

\item \textbf{The reduction-elimination cycle}:

In this stage, for each inequality $\alpha\leq\beta$ in the $\exists$-statement $\exists\vec{q}(\overline{\alpha}\leq\overline{\beta})$, we apply the following rules together with the splitting rules in the previous stage to eliminate all the propositional variables in the set of inequalities\footnote{Notice that since we do not have nominals in this stage, we do not have approximation rules anymore.}:

\begin{enumerate}

\item Residuation rules, as described on page \pageref{Page:Residuation:Rules}:

\item The Ackermann rules. These rules are executed on the whole $\exists$-statement $\exists\vec{q}(\overline{\alpha}\leq\overline{\beta})$. We take all the inequalities in this $\exists$-statement.\\

The right-handed Ackermann rule:

The $\exists$-statement $\exists q\exists{\vec q}$ 
$\left\{ \begin{array}{ll}
\theta_1\leq q \\
\vdots\\
\theta_n\leq q \\
\eta_1\leq\iota_1\\
\vdots\\
\eta_m\leq\iota_m\\

\end{array} \right.$ 
is replaced by \\

$\exists{\vec q}$
$\left\{ \begin{array}{ll}
\eta_1((\theta_1\lor\ldots\lor\theta_n)/q)\leq\iota_1((\theta_1\lor\ldots\lor\theta_n)/q) \\
\vdots\\
\eta_m((\theta_1\lor\ldots\lor\theta_n)/q)\leq\iota_m((\theta_1\lor\ldots\lor\theta_n)/q) \\

\end{array} \right.$

where:

\begin{enumerate}

\item $q$ does not occur in $\theta_1, \ldots, \theta_n$;
\item Each $\eta_i$ is positive, and each $\iota_i$ negative in $q$, for $1\leq i\leq m$.

\end{enumerate}

The left-handed Ackermann rule:

The $\exists$-statement $\exists q\exists{\vec q}$ 
$\left\{ \begin{array}{ll}
q\leq\theta_1 \\
\vdots\\
q\leq\theta_n \\
\eta_1\leq\iota_1\\
\vdots\\
\eta_m\leq\iota_m\\

\end{array} \right.$
is replaced by\\

$\exists{\vec q}$ $\left\{ \begin{array}{ll}
\eta_1((\theta_1\land\ldots\land\theta_n)/q)\leq\iota_1((\theta_1\land\ldots\land\theta_n)/q) \\
\vdots\\
\eta_m((\theta_1\land\ldots\land\theta_n)/q)\leq\iota_m((\theta_1\land\ldots\land\theta_n)/q) \\

\end{array} \right.$

where:

\begin{enumerate}

\item $q$ does not occur in $\theta_1, \ldots, \theta_n$;
\item Each $\eta_i$ is negative, and each $\iota_i$ positive in $q$, for $1\leq i\leq m$.

\end{enumerate}
\end{enumerate}

\item \textbf{Output}: If in the previous stage, for some existential propositional quantifier $\exists q$, the algorithm gets stuck, i.e.\ some propositional quantifiers cannot be eliminated by the application of the reduction rules, then the algorithm halts and output ``failure''. Otherwise, we get a meta-conjunction of inequalities of the form $\alpha_1\leq\beta_1\ \&\ \ldots \ \&\ \alpha_n\leq\beta_n$, and the algorithm proceeds in the second half of the algorithm with input quasi-inequality $$\overline{\phi}\leq\overline{\psi}\ \&\ \overline{\chi}\prec\overline{\xi}\Rightarrow\alpha_1\leq\beta_1\ \&\ \ldots \ \&\ \alpha_n\leq\beta_n.$$
\end{enumerate}

\paragraph{Second Half} The second half of the algorithm receives the quasi-inequality $$\overline{\phi}\leq\overline{\psi}\ \&\ \overline{\chi}\prec\overline{\xi}\Rightarrow\alpha_1\leq\beta_1\ \&\ \ldots \ \&\ \alpha_n\leq\beta_n.$$ as input, and the proceeds as the algorithm $\mathsf{ALBA}$ defined in Section \ref{Sec:ALBA}.

\section{Soundness}\label{Subsec:Soundness:Pi_2}

The soundness proof with respect to arbitrary valuations of the algorithm $\mathsf{ALBA}^{\Pi_2}$ in this section is similar to Section \ref{Sec:Soundness}, and hence is omitted.

\section{Success}\label{Subsec:Success:Pi_2}

\subsection{Success of $\mathsf{ALBA}^{\Pi_2}$ on Inductive $\Pi_2$-Statements}

For inductive $\Pi_2$-statements, their definition is given by the execution result of the algorithm, therefore after the first half of the algorithm, the $\exists$-statement part is transformed into a meta-conjunction of inequalities, each of which is $(\Omega,\epsilon_{\vec p})$-inductive, and therefore the input of the second half is an $(\Omega,\epsilon_{\vec p})$-inductive quasi-inequality, by the success of $\mathsf{ALBA}$ on inductive quasi-inequalities, we have that the algorithm succeeds on inductive $\Pi_2$-statements.

\subsection{Success of $\mathsf{ALBA}^{\Pi_2}$ on Restricted Inductive $\Pi_2$-Statements}

For restricted inductive $\Pi_2$-statements, what we need to show is that the algorithm $\mathsf{ALBA}^{\Pi_2}$ succeeds on restricted inductive $\Pi_2$-statements.

\begin{lemma}\label{Lemma:First:Stage:First:Half:Pi_2}
Given an $(\Omega,\epsilon_{\vec p})$-restricted inductive $\Pi_2$-statement $$\overline{\phi}\leq\overline{\psi}\ \&\ \overline{\chi}\prec\overline{\xi}\Rightarrow\exists\vec{q}(\overline{\alpha}\leq\overline{\beta}\ \&\ \overline{\gamma}\prec\overline{\delta})$$ in which the $\exists$-statement $\exists\vec{q}(\overline{\alpha}\leq\overline{\beta}\ \&\ \overline{\gamma}\prec\overline{\delta})$ part is $(\Omega,\epsilon_{\vec q})$-restricted first-round good, after the first stage of the first half of the algorithm, the $\exists$-statement is transformed into another $(\Omega,\epsilon_{\vec q})$-restricted first-round good $\exists$-statement of the shape $\exists \vec{q}(\overline{\alpha}\leq\overline{\beta})$, which contains no black connective. 
\end{lemma}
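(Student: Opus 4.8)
The plan is to mirror the proof of Lemma~\ref{Lemma:Definite:inductive}, one layer up: I treat the four preprocessing operations (distribution, splitting, the monotone/antitone variable-elimination rules for $\vec q$, and the subordination rewriting rule) in turn, and show that each of them sends a $(\Omega,\epsilon_{\vec q})$-restricted first-round good $\exists$-statement to another one (for the variable-elimination step, with respect to the restriction of $\epsilon_{\vec q}$ and $<_\Omega$ to the remaining variables). The output shape $\exists\vec{q}(\overline\alpha\leq\overline\beta)$ and the absence of black connectives are immediate bookkeeping: by Definition~\ref{Def:Pi_2:Restricted:Inductive} the input contains no black connectives and no ${\diamdot},{\boxdot}$; none of the four operations introduces a black connective; the subordination rewriting rule introduces only ${\diamdot}$, and only at the outermost level of a left-hand side; and after the rewriting step every inequality has main relation $\leq$.

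First I would treat the distribution and splitting rules. These are exactly the Stage~1 distribution/splitting rules of the quasi-inequality algorithm, and they act only on $+\vee$ and $-\wedge$ nodes occurring in the Skeleton part of branches of $+\theta$ and $-\eta$: distribution pushes such nodes towards the roots, and exhaustive splitting then removes them, replacing one inequality by several. The key observations, all already used in Lemma~\ref{Lemma:Definite:inductive}, are that these rewrites (i)~are sound equivalences in $X$, (ii)~do not change which leaves are $\epsilon_{\vec q}$- or $\epsilon_{\vec q}^{\partial}$-critical, (iii)~preserve the PIA part of any good branch and hence the property of a branch being a PIA branch or a Skeleton branch, and (iv)~only duplicate subtrees, never merge them, so any branch condition that held before holds for each copy afterwards. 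Consequently the partition into $(\Omega,\epsilon_{\vec q})$-restricted receiving and $(\Omega,\epsilon_{\vec q})$-restricted solvable inequalities is preserved: a restricted receiving inequality stays restricted receiving, and for a restricted solvable inequality $\iota\triangleleft\zeta$ the designated trees $\star\rho$ and $*\kappa$, the $(\Omega,\epsilon_{\vec q})$-inductiveness of $*\kappa$, the PIA-branch condition on its $\epsilon_{\vec q}$-critical branches, the dependence-order side condition, and the Skeleton-branch condition on the non-$\epsilon_{\vec q}$-critical branches of $-\iota,+\zeta$ all survive (each inequality possibly being replaced by several of the same kind).

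Next I would handle the monotone/antitone variable-elimination rules for $\vec q$: as in Lemma~\ref{Lemma:Definite:inductive}, substituting $\bot$ or $\top$ for a uniformly occurring $q\in\vec q$ deletes all $q$-leaves, creates no SRR node on any critical branch, and leaves the node classification everywhere else untouched, and one checks that the uniformity licensing the rule is compatible with the restricted-first-round-good structure, so the resulting $\exists(\vec{q}\setminus\{q\})$-statement is restricted first-round good for the restricted order-type and dependence order. Finally, for the subordination rewriting rule a restricted receiving, resp.\ restricted solvable, inequality $\gamma\prec\delta$ becomes ${\diamdot}\gamma\leq\delta$; since $+{\diamdot}$ is an SLR (Skeleton) node and $-{\diamdot}$ is an SRA (PIA) node, prepending ${\diamdot}$ at the root of $\gamma$ carries a Skeleton branch of $+\gamma$ to a Skeleton branch of $+({\diamdot}\gamma)$ and a PIA branch of $-\gamma$ to a PIA branch of $-({\diamdot}\gamma)$, without touching the leaves or introducing SRR nodes; a short case split on whether the ${\diamdot}\gamma$-side plays the role of $\star\rho$ or of $*\kappa$ in a solvable inequality then shows ${\diamdot}\gamma\leq\delta$ is again restricted receiving, resp.\ restricted solvable.

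The hard part will not be any single step but the combined bookkeeping: I must verify, for every branch condition packaged into the definition of a $(\Omega,\epsilon_{\vec q})$-restricted first-round good $\exists$-statement --- endpoints of critical branches, PIA-branch-ness of the $\epsilon_{\vec q}$-critical branches in the $*\kappa$-trees, Skeleton-branch-ness of the non-critical branches of both sides, the $<_\Omega$ side conditions, and the receiving/solvable dichotomy --- that it is stable under all four operations \emph{simultaneously}, in particular under the interaction between the distribution phase (which may duplicate and relocate subtrees) and the subordination rewriting (which inserts a node at the outermost level). This is exactly the analogue, one layer up, of the bookkeeping in the proof of Lemma~\ref{Lemma:Definite:inductive}, so I expect it to go through by the same method, with a somewhat longer case analysis.
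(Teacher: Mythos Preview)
Your proposal is correct and takes essentially the same approach as the paper. In fact, the paper's own proof consists of the single phrase ``Straightforward checking'', so you have done considerably more than the paper: you have spelled out what that checking amounts to, tracking the four preprocessing operations and verifying that each preserves the $(\Omega,\epsilon_{\vec q})$-restricted first-round good property, the absence of black connectives, and the output shape.
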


\begin{proof}
Straightforward checking.
\end{proof}

For each $(\Omega,\epsilon_{\vec q})$-restricted first-round good $\exists$-statement $\exists \vec{q}(\overline{\alpha}\leq\overline{\beta})$, it can be written in the form $\exists \vec{q}(\overline{\theta}\leq\overline{\eta}\ \&\ \overline{\iota}\leq\overline{\zeta})$, where each inequality $\theta\leq\eta$ is $(\Omega,\epsilon_{\vec q})$-restricted receiving, and each inequality $\iota\leq\zeta$ is $(\Omega,\epsilon_{\vec q})$-restricted solvable.

\begin{lemma}
For each $(\Omega,\epsilon_{\vec q})$-restricted solvable inequality $\iota\leq\zeta$ in the $\exists$-statement above, it can be transformed into a meta-conjunction of inequalities of the following kinds:
\begin{itemize}
\item $\gamma\leq\delta$, where $\gamma\leq\delta$ is a $(\Omega,\epsilon_{\vec q})$-restricted receiving inequality;
\item $\kappa\leq q$ (if $\epsilon(q)=1$) or $q\leq\kappa$ (if $\epsilon(q)=\partial$), where every branch in $+\kappa$ (resp.\ $-\kappa$) is a Skeleton branch.
\end{itemize}
\end{lemma}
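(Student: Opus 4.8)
The plan is to follow the proof of the analogous success lemma for $(\Omega,\epsilon)$-solvable inequalities in Section~\ref{Sec:Success}, while additionally carrying along the ``restricted'' bookkeeping, namely the Skeleton/PIA shape of the non-$\epsilon_{\vec q}$-critical branches. By Lemma~\ref{Lemma:First:Stage:First:Half:Pi_2} we may assume the $\exists$-statement has been brought to the form $\exists\vec q(\overline{\theta}\leq\overline{\eta}\ \&\ \overline{\iota}\leq\overline{\zeta})$ with each $\iota\leq\zeta$ a $(\Omega,\epsilon_{\vec q})$-restricted solvable inequality and no black connectives present. Fix one such $\iota\leq\zeta$ and let $\star\rho$, $*\kappa$ be as in the definition of restricted solvable inequality, with $*\kappa$ the tree carrying the $\epsilon_{\vec q}$-critical branches; without loss of generality $*\kappa=+\zeta$, the case $*\kappa=-\iota$ being symmetric. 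First note that every $\vec q$-leaf of $\star\rho=-\iota$ is $\epsilon_{\vec q}^{\partial}$-critical and every other leaf is trivially non-$\epsilon_{\vec q}$-critical, so \emph{all} branches of $-\iota$ are non-$\epsilon_{\vec q}$-critical; by the last clause of the definition of restricted solvable, all branches of $+\iota$ are therefore Skeleton branches. This is the invariant to be maintained on the ``$\star\rho$-side'' during the reduction.

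Then I would induct on the complexity of (the formula of) $*\kappa$, inspecting the outermost connective lying on an $\epsilon_{\vec q}$-critical branch, which by the second clause of restricted solvability is a PIA node. Base case: $\zeta=q$ with $\epsilon_{\vec q}(q)=1$ (the $\partial$ case dual), and $\iota\leq q$ is already of the required shape, since by the invariant every branch of $+\iota=+\kappa$ is Skeleton. If the node is a unary SRA node ($+\neg,+\Box,+\blacksquare,+{\boxdot},+{\boxdotb}$) apply the matching residuation rule to push it onto the $\star\rho$-side, e.g.\ $\iota\leq\Box\zeta'\rightsquigarrow\Diamondblack\iota\leq\zeta'$; the connective introduced on the $\star\rho$-side ($\Diamondblack$, $\neg$, $\ldots$) is an SLR node, so the invariant ``all branches Skeleton'' is preserved there, and the new $*\kappa$, being an immediate subtree of the old one, is again $(\Omega,\epsilon_{\vec q})$-inductive with all $\epsilon_{\vec q}$-critical branches PIA, so the hypothesis applies. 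If the node is a binary SRA node ($+\wedge$, i.e.\ $\zeta=\zeta_1\wedge\zeta_2$) apply the splitting rule: when only one $\zeta_j$ carries $\epsilon_{\vec q}$-critical branches, $\iota\leq\zeta_{j'}$ is a $(\Omega,\epsilon_{\vec q})$-restricted receiving inequality (its $\vec q$-branches on both sides are $\epsilon_{\vec q}^{\partial}$-critical since $\zeta_{j'}$ carries no critical branch, $+\iota$ has only Skeleton branches, and the branches of $-\zeta_{j'}$ are Skeleton because they are non-$\epsilon_{\vec q}$-critical branches of $+\zeta$), and we recurse on $\iota\leq\zeta_j$; when both carry critical branches we recurse on both. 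If the node is an SRR node ($+\vee$ or $+\to$, say $\zeta=\zeta_1\vee\zeta_2$ with $\zeta_2$ carrying the critical branches) then by inductiveness $\epsilon_{\vec q}^{\partial}(\zeta_1)\ll+\zeta$ and every $\vec q$-variable of $\zeta_1$ is $<_{\Omega}$ the critical-leaf variables of $\zeta_2$; apply $\lor$-Res-1 to get $\iota\wedge\neg\zeta_1\leq\zeta_2$ and recurse with new $\star\rho$-side $-(\iota\wedge\neg\zeta_1)$, whose $\vec q$-leaves still all come from $\epsilon_{\vec q}^{\partial}$-critical positions and which introduces no new $\vec p$-leaf. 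After finitely many steps every $\epsilon_{\vec q}$-critical leaf $q$ has produced an inequality $\kappa\leq q$ (or $q\leq\kappa$), and all the inequalities split off along the way are $(\Omega,\epsilon_{\vec q})$-restricted receiving.

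The main obstacle is the SRR step together with the exact reading of ``every branch in $+\kappa$ is a Skeleton branch''. Conjoining $\neg\zeta_1$ places a $+\neg$ (PIA-type) node above $\zeta_1$, so literally \emph{every} branch of $+\kappa$ cannot be Skeleton; what survives is that every branch of $+\kappa$ \emph{ending in a $\vec p$-variable} is a Skeleton branch --- equivalently, the formula obtained once the $\vec q$ have been eliminated by the Ackermann rule of the first half has all Skeleton branches, which is exactly what the second half of the algorithm uses. I would therefore strengthen the induction invariant to record separately: (i) the $\star\rho$-shape of the $\star\rho$-side tree; (ii) that all of its $\vec p$-branches are Skeleton; and (iii) that every subformula moved across an inequality by an SRR residuation is $\epsilon_{\vec q}^{\partial}$-uniform in $\vec q$, hence contains no $\vec p$-variable; and then verify each residuation and splitting rule one at a time against Table~\ref{aJoin:and:Meet:Friendly:Table}. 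The genuinely delicate case analyses are for $+\vee$ and $+\to$ (and, in the symmetric case $*\kappa=-\iota$, for $-\wedge$); the remaining cases run exactly as in the corresponding lemma of Section~\ref{Sec:Success}.
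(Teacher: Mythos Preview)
Your induction scheme and case split match the paper's proof exactly: induct on the critical-branch-carrying side $*\kappa$, dispatch unary SRA nodes by residuation, $+\wedge$ by splitting (with one summand possibly becoming a restricted receiving inequality), and SRR nodes $+\vee$, $+\to$ by the corresponding residuation rules. The first two paragraphs are essentially a faithful outline of the paper's argument.

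The problem is your third paragraph: the ``obstacle'' you identify in the SRR step is not real, and your proposed weakening of the invariant would fail to establish the lemma as stated. You claim that after $\iota\leq\zeta_1\vee\zeta_2 \rightsquigarrow \iota\wedge\neg\zeta_1\leq\zeta_2$ the tree $+(\iota\wedge\neg\zeta_1)$ cannot have every branch Skeleton because $+\neg$ is ``PIA-type''. But look again at Table~\ref{aJoin:and:Meet:Friendly:Table}: $+\neg$ and $+\wedge$ are listed \emph{both} under SRA and under SLR; they are simultaneously PIA and Skeleton nodes. So prefixing $+\wedge$ and $+\neg$ does not break any Skeleton branch. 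The branches through $-\zeta_1$ are Skeleton because all branches of $+\zeta_1$ are non-$\epsilon_{\vec q}$-critical (being $\epsilon_{\vec q}^{\partial}$-uniform in $\vec q$), and the last clause in the definition of restricted solvable therefore forces them to be Skeleton in $-\zeta_1$. Hence the strong invariant ``every branch of the $\star\rho$-side, read with the opposite sign, is Skeleton'' survives the SRR residuation intact, and no weakening is needed. (Your side remark that $\epsilon_{\vec q}^{\partial}$-uniformity in $\vec q$ implies absence of $\vec p$-variables is also incorrect: it only constrains the signs of $\vec q$-leaves.) Once you drop the spurious workaround, your proof coincides with the paper's.
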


\begin{proof}

Without loss of generality we consider the situation when 

\begin{itemize}
\item $-\iota$ is such that all branches ending with propositional variables in $\vec q$ are $\epsilon_{\vec q}^{\partial}$-critical, and $+\zeta$ contains $\epsilon_{\vec q}$-critical branches;
\item $+\zeta$ is $(\Omega, \epsilon_{\vec q})$-inductive, and all $\epsilon_{\vec q}$-critical branches in $+\zeta$ are PIA branches;
\item for all the $\epsilon_{\vec q}$-critical branches in $+\zeta$ ending with $q$, all propositional variables $r$ in $-\iota$, we have $r<_{\Omega} q$.
\item for every branch in $-\iota$ and $+\zeta$ which is not $\epsilon_{\vec q}$-critical, it is a Skeleton branch in $+\iota$ and $-\zeta$.
\end{itemize}

The situation where $+\zeta$ is such that all branches ending with $\vec q$ are $\epsilon_{\vec q}^{\partial}$-critical etc. is symmetric.

We prove by induction on the complexity of $\zeta$. It is easy to see that $\zeta$ does not contain any black connective.

\begin{itemize}
\item when $\zeta$ is $q$: $\iota\leq\zeta$ belongs to the second class and $\epsilon_{q}=1$;
\item when $\zeta$ is $p$ or $\bot$ or $\top$: it cannot be the case, since $+\zeta$ contains $\epsilon_{\vec q}$-critical branches;
\item when $\zeta$ is $\neg\gamma$: we can first apply the residuation rule for $\neg$ to $\iota\leq\neg\gamma$ to obtain $\gamma\leq\neg\iota$, and then we can apply the induction hypothesis for the case where
\begin{itemize}
\item $+\neg\iota$ is such that all branches ending with propositional variables in $\vec q$ are $\epsilon_{\vec q}^{\partial}$-critical;
\item $-\gamma$ contains $\epsilon_{\vec q}$-critical branches, and is $(\Omega, \epsilon_{\vec q})$-inductive, and all $\epsilon_{\vec q}$-critical branches in $-\gamma$ are PIA branches;
\item for all the $\epsilon_{\vec q}$-critical branches in $-\gamma$ ending with $q$, all propositional variables $r$ in $+\neg\iota$, we have $r<_{\Omega} q$;
\item for every branch in $+\neg\iota$ and $-\gamma$ which is not $\epsilon_{\vec q}$-critical, it is a Skeleton branch in $-\neg\iota$ and $+\gamma$;
\end{itemize}
\item when $\zeta$ is $\Diamond\gamma$ or ${\diamdot}\gamma$: it cannot be the case, since $\Diamond$ or ${\diamdot}$ is on an $\epsilon_{\vec q}$-critical branch in $+\zeta$ but it is not a PIA node;
\item when $\zeta$ is $\Box\gamma$: we can first apply the residuation rule for $\Box$ to $\iota\leq\Box\gamma$ to obtain $\Diamondblack\iota\leq\gamma$, and then we can apply the induction hypothesis for the case where
\begin{itemize}
\item $-\Diamondblack\iota$ is such that all branches ending with propositional variables in $\vec q$ are $\epsilon_{\vec q}^{\partial}$-critical;
\item $+\gamma$ contains $\epsilon_{\vec q}$-critical branches, and is $(\Omega, \epsilon_{\vec q})$-inductive, and all $\epsilon_{\vec q}$-critical branches in $+\gamma$ are PIA branches;
\item for all the $\epsilon_{\vec q}$-critical branches in $+\gamma$ ending with $q$, all propositional variables $r$ in $-\Diamondblack\iota$, we have $r<_{\Omega} q$;
\item for every branch in $-\Diamondblack\iota$ and $+\gamma$ which is not $\epsilon_{\vec q}$-critical, it is a Skeleton branch in $+\Diamondblack\iota$ and $-\gamma$;
\end{itemize}
\item when $\zeta$ is ${\boxdot}\gamma$, the situation is similar to the $\Box\gamma$ case;
\item when $\zeta$ is $\gamma\land\delta$, we first apply the splitting rule for $\land$ to $\iota\leq\gamma\land\delta$ to obtain $\iota\leq\gamma$ and $\iota\leq\delta$, then there are two possibilities, namely both of $+\gamma$ and $+\delta$ have $\epsilon_{\vec q}$-critical branches, and only one of $+\gamma$ and $+\delta$ has $\epsilon_{\vec q}$-critical branches;

for the first possibility, we can apply the induction hypothesis for the case where 
\begin{itemize}
\item $-\iota$ is such that all branches ending with propositional variables in $\vec q$ are $\epsilon_{\vec q}^{\partial}$-critical;
\item $+\gamma$ and $+\delta$ contain $\epsilon_{\vec q}$-critical branches, and are $(\Omega, \epsilon_{\vec q})$-inductive, and all $\epsilon_{\vec q}$-critical branches in $+\gamma$ and $+\delta$ are PIA branches;
\item for all the $\epsilon_{\vec q}$-critical branches in $+\gamma$ and $+\delta$ ending with $q$, all propositional variables $r$ in $-\iota$, we have $r<_{\Omega} q$;
\item for every branch in $-\iota$ and $+\gamma$ and $+\delta$ which is not $\epsilon_{\vec q}$-critical, it is a Skeleton branch in $+\iota$ and $-\gamma$ and $-\delta$;
\end{itemize}

for the second possibility, without loss of generality we assume that $+\gamma$ has $\epsilon_{\vec q}$-critical branches and $+\delta$ contains no $\epsilon_{\vec q}$-critical branches, then $\iota\leq\delta$ is $(\Omega, \epsilon_{\vec q})$-receiving (it is easy to check the Skeleton condition in $+\iota$ and $-\delta$), and we can apply the induction hypothesis to $\iota\leq\gamma$ for the case where 
\begin{itemize}
\item $-\iota$ is such that all branches ending with propositional variables in $\vec q$ are $\epsilon_{\vec q}^{\partial}$-critical;
\item $+\gamma$ contains $\epsilon_{\vec q}$-critical branches, and is $(\Omega, \epsilon_{\vec q})$-inductive, and all $\epsilon_{\vec q}$-critical branches in $+\gamma$ are PIA branches;
\item for all the $\epsilon_{\vec q}$-critical branches in $+\gamma$ ending with $q$, all propositional variables $r$ in $-\iota$, we have $r<_{\Omega} q$;
\item for every branch in $-\iota$ and $+\gamma$ which is not $\epsilon_{\vec q}$-critical, it is a Skeleton branch in $+\iota$ and $-\gamma$;
\end{itemize}
\item when $\zeta$ is $\gamma\lor\delta$: since $\lor$ is an SRR node, only one of $\gamma$ and $\delta$ contains $\epsilon_{\vec q}$-critical branches (without loss of generality we assume that it is $\delta$). Then 
\begin{itemize}
\item $+\gamma$ is such that all branches ending with propositional variables in $\vec q$ are $\epsilon_{\vec q}^{\partial}$-critical;
\item for each $q$ in an $\epsilon_{\vec q}$-critical branch in $+\delta$, each $r$ that occurs in $+\gamma$, we have $r<_{\Omega} q$;
\end{itemize}
Now we apply the residuation rule for $\lor$ to $\iota\leq\gamma\lor\delta$ to obtain $\iota\land\neg\gamma\leq\delta$, then we can apply the induction hypothesis for the case where 
\begin{itemize}
\item $-(\iota\land\neg\gamma)$ is such that all branches ending with propositional variables in $\vec q$ are $\epsilon_{\vec q}^{\partial}$-critical;
\item $+\delta$ has $\epsilon_{\vec q}$-critical branches and is $(\Omega,\epsilon_{\vec q})$-inductive, and all $\epsilon_{\vec q}$-critical branches in $+\delta$ are PIA branches;
\item for all the $\epsilon_{\vec q}$-critical branches in $+\delta$ ending with $q$, all propositional variables $r$ in $-(\iota\land\neg\gamma)$, we have $r<_{\Omega} q$;
\item for every branch in $-(\iota\land\neg\gamma)$ and $+\delta$ which is not $\epsilon_{\vec q}$-critical, it is a Skeleton branch in $+(\iota\land\neg\gamma)$ and $-\delta$;
\end{itemize}
\item when $\zeta$ is $\gamma\to\delta$: similar to the $\gamma\lor\delta$ case (in the sense of using one of the residuation rules for $\to$).
\end{itemize}
\end{proof}

\begin{lemma}\label{Lemma:Success:Ackermann:Pi_2}
Given a $\exists$-statement $\exists(\overline{\gamma}\leq\overline{\delta}\ \&\ \overline{\kappa}\leq \overline{q})$ where each $\gamma\leq\delta$ is $(\Omega,\epsilon_{\vec q})$-restricted receiving, and each $\kappa\leq q$ (when $\epsilon(q)=1$) or $q\leq\kappa$ (when $\epsilon(q)=\partial$) is such that every branch in $+\kappa$ (resp.\ $-\kappa$) is a Skeleton branch, after the Ackermann rule, it is either again a $\exists$-statement divided into the two kinds of inequalities, or all inequalities of the form $\kappa\leq q$ or $q\leq\kappa$ are eliminated.
\end{lemma}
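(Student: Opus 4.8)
The plan is to prove this as a single \textbf{Ackermann-closure} step: choose one variable of $\vec q$ for which a solvable inequality still remains, eliminate it by the appropriate Ackermann rule, and verify that the output again consists of the two prescribed kinds of inequalities (or, if that variable was the last one, that no inequality $\kappa\le q$ or $q\le\kappa$ survives). Iterating this step together with the two preceding lemmas then yields success of the first half of $\mathsf{ALBA}^{\Pi_2}$ on restricted inductive $\Pi_2$-statements. Throughout I would closely follow the pattern of the success proof in Section \ref{Sec:Success} and of \cite{CoPa12}.

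\textbf{Choosing the variable and applying Ackermann.} Among the variables $q\in\vec q$ still occurring in an inequality $\kappa\le q$ (case $\epsilon_{\vec q}(q)=1$) or $q\le\kappa$ (case $\epsilon_{\vec q}(q)=\partial$), pick one, $q_0$, that is $<_{\Omega}$-minimal; assume $\epsilon_{\vec q}(q_0)=1$, the other case being symmetric via the left-handed rule. List the inequalities with $q_0$ on the right as $\kappa_1\le q_0,\dots,\kappa_n\le q_0$ and the remaining inequalities of the $\exists$-statement as $\eta_1\le\iota_1,\dots,\eta_m\le\iota_m$. To see that the right-handed Ackermann rule applies, note first that $q_0$ does not occur in any $\kappa_j$: every branch of $+\kappa_j$ is a Skeleton branch and, by the dependence-order clause inherited from Definition \ref{Def:Exists:Statements} and the minimality of $q_0$, the only propositional variables surviving in $\kappa_j$ lie in $\vec p$ or are strictly $<_{\Omega}q_0$. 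Second, $q_0$ is positive in every $\eta_i$ and negative in every $\iota_i$: in a restricted receiving inequality $\gamma\le\delta$ the $\vec q$-leaves of $-\gamma$ and $+\delta$ are $\epsilon_{\vec q}^{\partial}$-critical, which for $\epsilon_{\vec q}(q_0)=1$ forces every occurrence of $q_0$ to be positive in $\gamma$ and negative in $\delta$; in a remaining solvable inequality $q_0$ occurs only inside Skeleton-branch parts and, by the bookkeeping of the dependence order, with its $\epsilon_{\vec q}$-critical sign, hence positively on the left-hand side. Thus the rule replaces the $\exists$-statement by the $\exists$-statement over $\vec q\setminus\{q_0\}$ with inequalities $\eta_i((\kappa_1\vee\cdots\vee\kappa_n)/q_0)\le\iota_i((\kappa_1\vee\cdots\vee\kappa_n)/q_0)$ for $1\le i\le m$.

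\textbf{Preservation of shape.} The substituted formula $\kappa_1\vee\cdots\vee\kappa_n$ again has only Skeleton branches; plugging it into a Skeleton-branch leaf produces only Skeleton branches, since the concatenation of two Skeleton branches is a Skeleton branch. Hence a remaining solvable inequality $\kappa'\le q'$ (or $q'\le\kappa'$), which contained $q_0$ only at Skeleton-branch leaves of $\kappa'$, stays of the same form, and the variables of the new $\kappa'$ are still $<_{\Omega}q'$ or in $\vec p$. Plugging $\kappa_1\vee\cdots\vee\kappa_n$ into an $\epsilon_{\vec q}^{\partial}$-critical $q_0$-leaf of a receiving inequality $\gamma\le\delta$ leaves $+\gamma,-\delta$ with only Skeleton branches and introduces no new $\vec q$-leaf except, possibly, ones for variables $<_{\Omega}q_0$, whose occurrences remain $\epsilon_{\vec q}^{\partial}$-critical in the relevant trees, so the inequality stays restricted receiving. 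Hence the output $\exists$-statement is again split into restricted receiving and restricted solvable inequalities with respect to the restriction of $<_{\Omega}$: if no solvable inequality is left we are in the second alternative of the statement, and otherwise we are back in the hypothesis of the lemma with one variable fewer.

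\textbf{Main obstacle.} The crux is the preservation-of-shape step, i.e.\ checking that the substitution introduces neither a PIA node on an $\epsilon_{\vec q}$-critical branch of a surviving solvable inequality nor a non-$\epsilon_{\vec q}^{\partial}$-critical $\vec q$-leaf in a receiving inequality. This is handled by tracking precisely where $q_0$, and the variables it is replaced by, can occur --- only on Skeleton branches, or at $\epsilon_{\vec q}^{\partial}$-critical leaves --- which in turn relies on the dependence-order conditions built into Definition \ref{Def:Exists:Statements} and on the $<_{\Omega}$-minimal choice of $q_0$. This verification is the direct analogue of the corresponding case analysis in the success proof of $\mathsf{ALBA}$ for quasi-inequalities (Section \ref{Sec:Success}) and in \cite{CoPa12}.
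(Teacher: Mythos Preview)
Your proof is correct and follows essentially the same approach as the paper: apply the Ackermann rule to one variable and verify that substituting a formula all of whose branches are Skeleton into positions that are themselves on Skeleton branches again yields only Skeleton branches, so the two-class decomposition is preserved. Your argument is in fact more explicit than the paper's terse version---you specify the choice of variable (a $<_{\Omega}$-minimal one), you check the polarity conditions needed for the Ackermann rule, and you explicitly treat the case where $q_0$ occurs inside another solvable inequality $\kappa'\leq q'$, a case the paper's proof absorbs silently by assuming the remaining inequalities containing the eliminated variable are all receiving.
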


\begin{proof}
Without loss of generality we consider the application of the right-handed Ackermann rule where the $\exists$-statement $\exists q\exists{\vec q}$ 
$\left\{ \begin{array}{ll}
\kappa_1\leq q \\
\vdots\\
\kappa_n\leq q \\
\gamma_1\leq\delta_1\\
\vdots\\
\gamma_m\leq\delta_m\\
\iota_1\leq\zeta_1\\
\vdots\\
\iota_k\leq\zeta_k\\
\end{array} \right.$ 
is replaced by \\

$\exists{\vec q}$
$\left\{ \begin{array}{ll}
\gamma_1((\kappa_1\lor\ldots\lor\kappa_n)/q)\leq\delta_1((\kappa_1\lor\ldots\lor\kappa_n)/q) \\
\vdots\\
\gamma_m((\kappa_1\lor\ldots\lor\kappa_n)/q)\leq\delta_m((\kappa_1\lor\ldots\lor\kappa_n)/q) \\
\iota_1\leq\zeta_1\\
\vdots\\
\iota_k\leq\zeta_k\\
\end{array} \right.$

where:

\begin{enumerate}
\item $q$ does not occur in $\kappa_1, \ldots, \kappa_n$;
\item Each $\gamma_i$ is positive, and each $\delta_i$ negative in $q$, for $1\leq i\leq m$;
\item Each $\iota_i$, $\zeta_i$ does not contain $q$.
\end{enumerate}

Here each $\gamma_i\leq\delta_i$ is $(\Omega,\epsilon_{\vec q})$-restricted receiving, $\epsilon(q)=1$, and each branch in $+\kappa_i$ is a Skeleton branch. Since each branch in $+\gamma_i$ and $-\delta_i$ is Skeleton branch, and $q$ is positive in each $+\gamma_i$ and $-\delta_i$, so replacing $q$ by $\kappa_1\lor\ldots\lor\kappa_n$ makes $+\gamma_i((\kappa_1\lor\ldots\lor\kappa_n)/q)$ and $-\delta_i((\kappa_1\lor\ldots\lor\kappa_n)/q)$ again trees where each branch is Skeleton, and each $\iota_i\leq\zeta_i$ is kept the same, so the $\exists$-statement can still be divided into the two kinds of inequalities, or all inequalities of the form $\kappa\leq q$ or $q\leq\kappa$ are eliminated.
\end{proof}

By repeatedly applying Lemma \ref{Lemma:Success:Ackermann:Pi_2}, since after the monotone and antitone variable elimination rules, every propositional variable in $\vec q$ has a critical branch in the $\exists$-statement, so there will be an inequality of the form $\kappa\leq q$ or $q\leq\kappa$ after application of the residuation rules, therefore all propositional variables appearing in the existential propositional quantifiers can be eliminated. 

Summarizing the proofs above, we have the following result:

\begin{theorem}
The first half of $\mathsf{ALBA}^{\Pi_2}$ succeeds on $(\Omega,\epsilon_{\vec q})$-restricted first-round good $\exists$-statements, and output a meta-conjunction of the form $\overline{\gamma}\leq\overline{\delta}$, where each branch of $+\gamma_i$ and $-\delta_i$ is Skeleton.
\end{theorem}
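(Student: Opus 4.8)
The plan is to assemble the theorem from the three lemmas just proved (Lemmas~\ref{Lemma:First:Stage:First:Half:Pi_2}, the unnumbered solvable-inequality lemma, and Lemma~\ref{Lemma:Success:Ackermann:Pi_2}), together with the soundness results of Section~\ref{Subsec:Soundness:Pi_2}, by tracing the first half of $\mathsf{ALBA}^{\Pi_2}$ stage by stage on an $(\Omega,\epsilon_{\vec q})$-restricted first-round good $\exists$-statement $\exists\vec q(\overline\alpha\leq\overline\beta\ \&\ \overline\gamma\prec\overline\delta)$. First I would invoke Lemma~\ref{Lemma:First:Stage:First:Half:Pi_2} to see that after Stage~1 (distribution, splitting, monotone/antitone variable elimination for $\vec q$, subordination rewriting) we obtain an $\exists$-statement of the shape $\exists\vec q(\overline\alpha\leq\overline\beta)$ which is again $(\Omega,\epsilon_{\vec q})$-restricted first-round good and contains no black connectives; in particular its inequalities split into $(\Omega,\epsilon_{\vec q})$-restricted receiving ones and $(\Omega,\epsilon_{\vec q})$-restricted solvable ones. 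Crucially I would also record that after the monotone/antitone elimination step every $q\in\vec q$ occurs in at least one $\epsilon_{\vec q}$-critical branch (this is exactly the analogue of the remark made for quasi-inequalities at the end of Section~\ref{Sec:Success}), since this is what guarantees that each $q$ eventually acquires an inequality of the form $\kappa\leq q$ or $q\leq\kappa$.

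Next I would apply the solvable-inequality lemma to each $(\Omega,\epsilon_{\vec q})$-restricted solvable inequality $\iota\leq\zeta$: exhaustively applying the residuation and splitting rules turns it into a meta-conjunction consisting of $(\Omega,\epsilon_{\vec q})$-restricted receiving inequalities together with inequalities $\kappa\leq q$ (if $\epsilon_{\vec q}(q)=1$) or $q\leq\kappa$ (if $\epsilon_{\vec q}(q)=\partial$) in which every branch of $+\kappa$ (resp.\ $-\kappa$) is a Skeleton branch. At this point the whole $\exists$-statement has exactly the form demanded by the hypothesis of Lemma~\ref{Lemma:Success:Ackermann:Pi_2}. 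I would then argue by induction on $|\vec q|$ using the dependence order $<_\Omega$: pick a variable $q$ that is $<_\Omega$-maximal among the remaining ones in $\vec q$; by the above, and because the minimal-valuation sides $\kappa$ contain only variables strictly below $q$ in $<_\Omega$, $q$ does not occur in any $\kappa_i$ on its own left-hand (resp.\ right-hand) side, so the right-handed (resp.\ left-handed) Ackermann rule is applicable. By Lemma~\ref{Lemma:Success:Ackermann:Pi_2} the result of the elimination is again an $\exists$-statement of the same two-kinds shape (with one fewer existential quantifier), and by the soundness of the Ackermann rules over arbitrary valuations (Section~\ref{Subsec:Soundness:Pi_2}) it is equivalent to the original one. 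Iterating removes all of $\vec q$, leaving a genuine meta-conjunction of inequalities $\overline\gamma\leq\overline\delta$ with no propositional quantifiers; since the only surviving inequalities are the receiving ones, every branch of $+\gamma_i$ and $-\delta_i$ is a Skeleton branch by definition of $(\Omega,\epsilon_{\vec q})$-restricted receiving, and no black connectives have been introduced (the rules used — distribution, splitting, residuation, Ackermann — do not create $\Diamondblack,\blacksquare,{\boxdotb},{\diamdotb}$ in receiving inequalities, only possibly in the PIA sides $*\kappa$, which are substituted away).

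The main obstacle I expect is the bookkeeping around the interaction of the two order-types $\epsilon_{\vec p}$ and $\epsilon_{\vec q}$ and the ``Skeleton-branch-in-$\vec p$'' condition: one must check that substituting the minimal valuations $\kappa_1\vee\cdots\vee\kappa_n$ (or the dual meet) for $q$ preserves, simultaneously, (i) the property that each remaining receiving inequality is still $(\Omega,\epsilon_{\vec q})$-restricted receiving, (ii) the Skeleton-branch condition on the $\vec p$-occurrences, and (iii) polarity of $q'$ for the next variable $q'$ to be eliminated. This is precisely the content of Lemma~\ref{Lemma:Success:Ackermann:Pi_2}, so the present theorem mostly has to orchestrate the induction correctly; the delicate point is verifying that $<_\Omega$-maximality of the chosen $q$ really does make some $\kappa\leq q$ or $q\leq\kappa$ available \emph{and} $q$-free on the left of those inequalities, which follows from the third clause of the restricted solvable/receiving definitions ($r<_\Omega q$ for all $r$ in the minimal-valuation side) together with the fact that variables of $\vec p$ are all $<_\Omega$-below all of $\vec q$.
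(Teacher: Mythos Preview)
Your proposal is correct and follows essentially the same route as the paper: the paper's argument for this theorem is little more than a summary sentence pointing to the three preceding lemmas and the observation that after variable-elimination every $q\in\vec q$ retains an $\epsilon_{\vec q}$-critical occurrence, so repeated application of Lemma~\ref{Lemma:Success:Ackermann:Pi_2} terminates. Your version spells out the induction along $<_\Omega$ more explicitly than the paper does, which is fine.

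One small inaccuracy in your final parenthetical: you write that the rules ``do not create $\Diamondblack,\blacksquare,{\boxdotb},{\diamdotb}$ in receiving inequalities, only possibly in the PIA sides $*\kappa$, which are substituted away''. In fact the residuation rules for $\Box$ and ${\boxdot}$ introduce $\Diamondblack$ and ${\diamdotb}$ into the $\kappa$'s, and those $\kappa$'s are then substituted \emph{into} the receiving inequalities by the Ackermann rule, not substituted away. So the output meta-conjunction can and generally will contain black connectives (the paper acknowledges this explicitly in Definition~\ref{Def:Pi_2:Inductive}). This does not harm the theorem, since the claim is only that every branch of $+\gamma_i$ and $-\delta_i$ is Skeleton, and the black connectives arising this way carry the correct polarity to be SLR nodes; but your stated reason for the Skeleton conclusion should rest on that polarity check rather than on absence of black connectives.
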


Since each branch of $+\gamma_i$ and $-\delta_i$ is Skeleton, we have that $\gamma_i\leq\delta_i$ is $(\Omega,\epsilon_{\vec p})$-inductive for all order-types and all dependence orders, so the second half of the algorithm succeeds on the input quasi-inequality, so we have the following final result:

\begin{theorem}
$\mathsf{ALBA}^{\Pi_2}$ succeeds on all restricted inductive $\Pi_2$-statements.
\end{theorem}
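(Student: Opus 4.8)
The plan is to read the statement as an immediate corollary of the two success results already in place for the two halves of $\mathsf{ALBA}^{\Pi_2}$, together with one small observation about all-Skeleton signed generation trees. Fix an $(\Omega,\epsilon_{\vec p})$-restricted inductive $\Pi_2$-statement $\overline{\phi}\leq\overline{\psi}\ \&\ \overline{\chi}\prec\overline{\xi}\Rightarrow\exists\vec{q}(\overline{\alpha}\leq\overline{\beta}\ \&\ \overline{\gamma}\prec\overline{\delta})$, so that by Definition \ref{Def:Pi_2:Restricted:Inductive} the $\exists$-statement $\exists\vec{q}(\overline{\alpha}\leq\overline{\beta}\ \&\ \overline{\gamma}\prec\overline{\delta})$ is $(\Omega,\epsilon_{\vec q})$-restricted first-round good and each of the inequalities $\phi\leq\psi$ and $\chi\prec\xi$ is $(\Omega,\epsilon_{\vec p})$-receiving or $(\Omega,\epsilon_{\vec p})$-solvable.

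First I would invoke the preceding theorem: the first half of $\mathsf{ALBA}^{\Pi_2}$ does not get stuck on the $(\Omega,\epsilon_{\vec q})$-restricted first-round good $\exists$-statement and returns a meta-conjunction $\gamma_1\leq\delta_1\ \&\ \ldots\ \&\ \gamma_n\leq\delta_n$ in which every branch of each $+\gamma_i$ and $-\delta_i$ is a Skeleton branch. This is exactly what the earlier lemmas assemble: preprocessing preserves $(\Omega,\epsilon_{\vec q})$-restricted first-round goodness and removes black connectives by Lemma \ref{Lemma:First:Stage:First:Half:Pi_2}; the structural lemma on restricted solvable inequalities turns each such inequality (via exhaustive residuation and splitting) into $(\Omega,\epsilon_{\vec q})$-restricted receiving inequalities together with minimal-valuation inequalities $\kappa\leq q$ (if $\epsilon(q)=1$) or $q\leq\kappa$ (if $\epsilon(q)=\partial$) whose $\kappa$-side is all-Skeleton; and Lemma \ref{Lemma:Success:Ackermann:Pi_2} shows the Ackermann rule consumes these minimal-valuation inequalities while keeping every remaining branch Skeleton. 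Since after the monotone/antitone variable-elimination step every $q\in\vec q$ still has an $\epsilon_{\vec q}$-critical branch, every existential propositional quantifier is eventually eliminated, which gives the claimed output shape.

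Next I would check that the quasi-inequality $\overline{\phi}\leq\overline{\psi}\ \&\ \overline{\chi}\prec\overline{\xi}\Rightarrow\gamma_1\leq\delta_1\ \&\ \ldots\ \&\ \gamma_n\leq\delta_n$ handed to the second half is $(\Omega,\epsilon_{\vec p})$-inductive. Splitting it into the individual quasi-inequalities $\overline{\phi}\leq\overline{\psi}\ \&\ \overline{\chi}\prec\overline{\xi}\Rightarrow\gamma_i\leq\delta_i$, the antecedent inequalities are receiving or solvable by hypothesis, so it remains only to see that $+\gamma_i$ and $-\delta_i$ are $(\Omega,\epsilon_{\vec p})$-inductive signed generation trees. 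But a signed generation tree all of whose branches are Skeleton branches is $(\Omega,\epsilon)$-inductive for \emph{every} order type $\epsilon$ and \emph{every} dependence order $<_\Omega$: every $\epsilon$-critical branch is then a Skeleton branch and hence good, so clause (1) of Definition \ref{aInducive:Ineq:Def} holds, and such a branch contains no SRR node at all (the Skeleton nodes are precisely the $\Delta$-adjoints and SLR nodes, while SRR is a PIA category), so clause (2) is vacuously satisfied. Thus the handed-off quasi-inequality is $(\Omega,\epsilon_{\vec p})$-inductive.

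Finally, the second half of $\mathsf{ALBA}^{\Pi_2}$ is literally the algorithm $\mathsf{ALBA}$ of Section \ref{Sec:ALBA}, so the Success Theorem (Theorem \ref{Them:Success}) applies and reduces this quasi-inequality to a set of pure quasi-inequalities, whose standard translation yields the first-order correspondent; hence $\mathsf{ALBA}^{\Pi_2}$ succeeds on every restricted inductive $\Pi_2$-statement. The assembly is routine once the lemmas above are in hand; the one point worth double-checking is the hand-off between the two halves, namely that the (black) connectives which residuation can introduce while eliminating $\vec q$ land only on Skeleton branches of the consequent, so that the $\mathsf{ALBA}$-success argument of Section \ref{Sec:Success} — read with the evident approximation and residuation clauses for the black connectives — still covers the handed-off quasi-inequality. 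Everything else is the bookkeeping already carried out in Lemmas \ref{Lemma:First:Stage:First:Half:Pi_2} and \ref{Lemma:Success:Ackermann:Pi_2}.
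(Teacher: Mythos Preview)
Your proposal is correct and follows essentially the same route as the paper: invoke the first-half success theorem to get an all-Skeleton consequent, observe that all-Skeleton signed generation trees are $(\Omega,\epsilon_{\vec p})$-inductive for every choice of $(\Omega,\epsilon_{\vec p})$, and then apply the second-half success (i.e.\ Theorem~\ref{Them:Success}). The paper's own argument is the single sentence between the two theorems making exactly this point; you have simply unpacked it and, usefully, flagged the black-connectives hand-off issue that the paper leaves implicit (cf.\ the parenthetical ``which might contain black connectives'' in Definition~\ref{Def:Pi_2:Inductive} and the ``argument similar to Lemma~\ref{Lemma:Approximation:Quasi}'' in Section~\ref{Subsec:Canonicity:Pi_2}).
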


\section{Canonicity}\label{Subsec:Canonicity:Pi_2}

In this section we prove the canonicity for restricted inductive $\Pi_2$-statements. The basic idea is again to check the topological correctness of the running of $\mathsf{ALBA}^{\Pi_2}$, and the focus here is again the Ackermann lemmas.

\begin{theorem}
Given a restricted inductive $\Pi_2$-statement as input, $\mathsf{ALBA}^{\Pi_2}$ can topologically correctly execute on it.
\end{theorem}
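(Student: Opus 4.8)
The plan is to retrace the success proof of $\mathsf{ALBA}^{\Pi_2}$ on restricted inductive $\Pi_2$-statements (Section \ref{Subsec:Success:Pi_2}), while maintaining a single \emph{topological invariant} on every system produced during the run: the left-hand side of each non-pure inequality is syntactically closed and its right-hand side is syntactically open (Definition \ref{Def:Closed:Open:Formulas}). Once this invariant is established throughout, every application of an Ackermann rule — in either half of the algorithm — has exactly the closed/open shape required by the topological Ackermann lemmas \ref{aRight:Ack} and \ref{aLeft:Ack}, which is precisely what Definition \ref{Def:Topological:Correct:Executions} demands of a topologically correct execution. I would carry out the verification along the two halves of $\mathsf{ALBA}^{\Pi_2}$.

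For the first half, the input $\exists$-statement $\exists\vec q(\overline\alpha\leq\overline\beta\ \&\ \overline\gamma\prec\overline\delta)$ contains no nominals, no black connectives and no $\diamdot,\boxdot$ (Definition \ref{Def:Pi_2:Restricted:Inductive}), so every formula occurring in it is simultaneously syntactically closed and syntactically open. In Stage~1 the distribution, splitting and monotone/antitone variable-elimination rules introduce no new connectives and respect polarities, while the subordination rewriting rule replaces each $\gamma\prec\delta$ by $\diamdot\gamma\leq\delta$, placing a single \emph{positive} $\diamdot$ at the root of the left-hand side; by Lemma \ref{Lemma:First:Stage:First:Half:Pi_2} the outcome is again restricted first-round good, and now the left-hand sides are syntactically closed and the right-hand sides syntactically open. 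In Stage~2 I would inspect the residuation rules one by one: the residuation rules for $\Diamond$ and $\diamdot$ place a positive $\Diamondblack$, resp.\ $\diamdotb$, at the root of a left-hand side, keeping it closed; those for $\Box$ and $\boxdot$ place a positive $\blacksquare$, resp.\ $\boxdotb$, at the root of a right-hand side, keeping it open; and the $\neg$-, $\land$-, $\lor$- and $\to$-residuation rules introduce no black connectives and preserve the dichotomy. For the first-half Ackermann rule the invariant gives, in the right-handed case, each $\theta_i$ syntactically closed and each non-solving inequality $\eta_j\leq\iota_j$ with $\eta_j$ closed, $\iota_j$ open, $q$ positive in $\eta_j$ and negative in $\iota_j$; since a finite disjunction of closed formulas is closed, substituting $\theta_1\lor\cdots\lor\theta_n$ for $q$ keeps each $\eta_j$ closed and each $\iota_j$ open (a closed subformula grafted at a negative position becomes open), so the application is topologically correct and the invariant persists (the left-handed case is symmetric). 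Note that under an admissible valuation the existentially quantified $\vec q$ range over $Clop(X)$, so the topological Ackermann lemmas genuinely apply here. By Lemma \ref{Lemma:Success:Ackermann:Pi_2} the first half then terminates with an output $\overline\gamma\leq\overline\delta$ in which every $+\gamma_i$, $-\delta_i$ is an all-Skeleton signed tree; reading off the SLR classification in Table \ref{aJoin:and:Meet:Friendly:Table} one sees that such a tree can only contain $\Diamondblack,\diamdot,\diamdotb$ positively and $\blacksquare,\boxdot,\boxdotb$ negatively (and no nominals), so $\gamma_i$ is syntactically closed and $\delta_i$ syntactically open.

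For the second half the input quasi-inequality is $\overline\phi\leq\overline\psi\ \&\ \overline\chi\prec\overline\xi\Rightarrow\gamma_i\leq\delta_i$, where $\phi,\psi,\chi,\xi$ are free of nominals, black connectives and $\diamdot,\boxdot$ (hence both closed and open), and $\gamma_i,\delta_i$ are closed, resp.\ open, by the previous paragraph. This is exactly the configuration handled in the proof that $\mathsf{ALBA}$ executes topologically correctly on restricted inductive quasi-inequalities (Section \ref{Sec:Canonicity}): the subordination rewriting turns $\chi\prec\xi$ into $\diamdot\chi\leq\xi$ with $\diamdot\chi$ closed, the first-approximation rule adjoins $\nomi_0\leq\gamma_i$ and $\delta_i\leq\neg\nomi_1$ whose left-, resp.\ right-hand sides are again closed, resp.\ open, and every rule of Stage~2 preserves the ``left-hand side closed, right-hand side open'' invariant on non-pure inequalities. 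Hence every Ackermann application in the second half is topologically correct as well, and the whole execution of $\mathsf{ALBA}^{\Pi_2}$ is topologically correct.

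The main obstacle is the first half. Unlike the quasi-inequality case, where ``restricted'' guarantees that no black connective arises until the first approximation, here the presence of $\Box$ and $\Diamond$ inside the $\exists$-statement forces $\blacksquare,\Diamondblack,\boxdotb,\diamdotb$ into the system already during the elimination of $\exists\vec q$. One must therefore argue carefully, using the SRA/SLR classification of Table \ref{aJoin:and:Meet:Friendly:Table} together with the polarity bookkeeping of the residuation and Ackermann rules, that every such black connective enters with the polarity compatible with syntactic closedness on the left and openness on the right; the auxiliary observation that an all-Skeleton positive (resp.\ negative) signed generation tree is automatically syntactically closed (resp.\ open) is what bridges the output of the first half to the hypotheses of the second half.
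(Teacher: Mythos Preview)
Your proposal is correct and follows essentially the same approach as the paper: maintain the invariant ``left-hand side syntactically closed, right-hand side syntactically open'' through both halves of the run, checking it after Stage~1 and verifying that the residuation, splitting and Ackermann rules preserve it. One small slip: in your inspection of the residuation rules you have the labels swapped --- it is ($\Box$-Res) and ($\boxdot$-Res) that introduce a positive $\Diamondblack$, resp.\ $\diamdotb$, on the left, while ($\Diamond$-Res) and ($\diamdot$-Res) introduce a positive $\blacksquare$, resp.\ $\boxdotb$, on the right --- but the conclusion that the invariant is preserved is unaffected.
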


\begin{proof}
We basically follow the success proof in Section \ref{Subsec:Success:Pi_2}, while pay attention to the topological correctness of the execution. 

From Lemma \ref{Lemma:First:Stage:First:Half:Pi_2}, given a restricted inductive $(\Omega,\epsilon_{\vec p})$-$\Pi_2$-statement $$\overline{\phi}\leq\overline{\psi}\ \&\ \overline{\chi}\prec\overline{\xi}\Rightarrow\exists\vec{q}(\overline{\alpha}\leq\overline{\beta}\ \&\ \overline{\gamma}\prec\overline{\delta})$$ in which the $\exists$-statement $\exists\vec{q}(\overline{\alpha}\leq\overline{\beta}\ \&\ \overline{\gamma}\prec\overline{\delta})$ part is $(\Omega,\epsilon_{\vec q})$-restricted first-round good, after the first stage of the first half of the algorithm, the $\exists$-statement is transformed into another $(\Omega,\epsilon_{\vec q})$-restricted first-round good $\exists$-statement of the shape $\exists \vec{q}(\overline{\alpha}\leq\overline{\beta})$, which contains no black connective, and it is easy to check that each inequality is syntactically closed on the left-hand side, and syntactically open on the right-hand side.

Now we can easily check that the following property holds for the $\exists$-statement:\\

in each inequality inside the $\exists$-statement, the left-hand side is syntactically closed and the right-hand side is syntactically open.\\

It is easy to check that for each rule in Stage 2 of the first half, it does not break this property, so for each execution of the Ackermann rule, it is topologically correct, and after the execution of the Ackermann rule, it still satisfies the property stated above. Therefore, the execution of $\mathsf{ALBA}^{\Pi_2}$ is topologically correct in the first half.\\

Now for the second half, the input quasi-inequality is of the form $$\overline{\phi}\leq\overline{\psi}\ \&\ \overline{\chi}\prec\overline{\xi}\Rightarrow\alpha_1\leq\beta_1\ \&\ \ldots \ \&\ \alpha_n\leq\beta_n,$$

where 
\begin{itemize}
\item each inequality in $\phi\leq\psi$ and $\chi\prec\xi$ is either $(\Omega,\epsilon_{\vec p})$-receiving or $(\Omega,\epsilon_{\vec p})$-solvable;
\item each branch in $+\alpha_i$ and $-\beta_i$ is a Skeleton branch.
\end{itemize}

Now by applying the subordination rewritting rule and then splitting the quasi-inequality into a meta-conjunction of quasi-inequalities of the form 

$$\overline{\phi}\leq\overline{\psi}\Rightarrow\alpha_i\leq\beta_i.$$

By applying the first-approximation rule, we get 

$$\overline{\phi}\leq\overline{\psi}\ \&\ \nomi\leq\alpha_i\ \&\ \beta_i\leq\neg\nomj\ \Rightarrow\ \nomi\leq\neg\nomj,$$

here each $\phi$ is syntactically closed, each $\psi$ is syntactically open, every branch in $+\alpha_i$ and $-\beta_i$ are Skeleton branches.

Now by an argument similar to Lemma \ref{Lemma:Approximation:Quasi}, we have that $\nomi\leq\alpha_i$ and $\beta_i\leq\neg\nomj$ can be reduced to a meta-conjunction of the following kinds of inequalities:

\begin{itemize}
\item pure inequalities;
\item inequalities of the form $\nomk\leq p$ or $p\leq\neg\nomk$ (notice that they are syntactically closed on the left-hand side and syntactically open on the right-hand side);
\end{itemize}

Now the antecedent system has two kinds of inequalities, one kind is pure, the other kind is syntactically closed on the left-hand side and syntactically open on the right-hand side.

It is easy to check that for each rule in Stage 2 of the second half, it does not break this property, so for each execution of the Ackermann rule, it is topologically correct, and after the execution of the Ackermann rule, it still satisfies the property stated above. Therefore, the execution of $\mathsf{ALBA}^{\Pi_2}$ is topologically correct in the second half.\\
\end{proof}

\begin{corollary}
Restricted inductive $\Pi_2$-statements are canonical.
\end{corollary}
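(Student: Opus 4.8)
The plan is to close the canonicity-via-correspondence (U-shaped) argument of Section~\ref{SubSec:U:Shaped:Argument}, now in the setting of $\Pi_2$-statements. Fix a Stone space with two relations $X$ and a restricted inductive $\Pi_2$-statement $\mathsf{Quasi}$. By the preceding theorem, $\mathsf{ALBA}^{\Pi_2}$ admits a topologically correct execution on $\mathsf{Quasi}$, producing (via the first half, the second half, and finally the standard translation of the resulting pure quasi-inequalities) a pure first-order sentence $\mathsf{FO(Quasi)}$. The diagram to establish is
\begin{center}
\begin{tabular}{l c l}
$X\Vdash_{Clop}\mathsf{Quasi}$ & &$X\Vdash_{P}\mathsf{Quasi}$\\
\ \ \ \ \ \ $\Updownarrow$ & &\ \ \ \ \ \ $\Updownarrow$\\
$X\vDash\mathsf{FO(Quasi)}$ &\ \ \ $\Leftrightarrow$ \ \ \ &$X\vDash\mathsf{FO(Quasi)}$
\end{tabular}
\end{center}
The right-hand vertical equivalence is the soundness of $\mathsf{ALBA}^{\Pi_2}$ with respect to arbitrary valuations (Section~\ref{Subsec:Soundness:Pi_2}), and the bottom equivalence is immediate since $\mathsf{FO(Quasi)}$ contains no unary predicates for propositional variables, so its truth on $X$ does not depend on any valuation. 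Once the left-hand vertical equivalence is in place, chasing the diagram gives $X\Vdash_{Clop}\mathsf{Quasi}$ iff $X\Vdash_{P}\mathsf{Quasi}$; in particular admissible validity implies validity, which is the definition of canonicity, and by the object-level duality of Section~\ref{Sec:Modal:Subord:Alg} this is the same as preservation of the $\Pi_2$-statement under canonical extensions of modal subordination algebras.

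So the real work lies in the left arm: showing that every step of the topologically correct execution is sound with respect to admissible valuations. For all rules other than the Ackermann rules --- the distribution, splitting, monotone/antitone $\vec q$-elimination and subordination rewriting rules of the first half, and the first-approximation, residuation, approximation and splitting rules of the second half --- soundness over $Clop(X)$ is verified exactly as over arbitrary valuations (the arguments of Section~\ref{Sec:Soundness}), because these rules are genuine equivalences that never appeal to a non-clopen witness: e.g.\ the monotone/antitone $\vec q$-elimination is justified over $Clop(X)$ by instantiating $q$ to $\bot$ or $\top$ (which are clopen) and then using monotonicity. For the Ackermann rules I would invoke the right- and left-handed topological Ackermann lemmas (Lemmas~\ref{aRight:Ack} and~\ref{aLeft:Ack}): the preceding theorem has already verified that along a topologically correct execution every non-pure inequality in the relevant system has a syntactically closed left-hand side and a syntactically open right-hand side, which are precisely the hypotheses of those lemmas, so they deliver the required equivalence between the pre- and post-Ackermann systems over admissible valuations. (Note that the witnessing clopen $a_0$ produced by the lemma need not equal the merely closed set $V(\theta)$; compactness is what yields a clopen one, which is why the syntactic closed/open shape is essential.)

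The step I expect to need the most care is the first half of $\mathsf{ALBA}^{\Pi_2}$, where the Ackermann rule eliminates the \emph{existential} propositional quantifiers $\exists q$ rather than implicit universal ones, and where no nominals are present. Here I would match the semantics of $\exists q(\overline{\gamma}\triangleleft\overline{\delta})$ over admissible valuations --- the existence of a \emph{clopen} value for $q$ making the enclosed meta-conjunction true --- directly with condition~2 of the topological Ackermann lemma (existence of $a_0\in Clop(X)$ with $V'(\theta)\leq a_0$ and the residual inequalities satisfied), so that applying the Ackermann rule to an $\exists q$-block is a valid equivalence over $Clop(X)$ exactly when that block's system has the syntactically closed/open shape established earlier. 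Once the first half terminates, its output meta-conjunction $\overline{\gamma}\leq\overline{\delta}$ has each $+\gamma_i$, $-\delta_i$ built only from Skeleton branches, hence is syntactically closed on the left and open on the right, so it enters the second half exactly as a plain $\mathsf{ALBA}$ input whose topologically correct execution is already handled in Part~I. Assembling the first-half and second-half soundness statements over $Clop(X)$ yields the left arm of the diagram, and the corollary follows.
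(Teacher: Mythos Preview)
Your proposal is correct and follows essentially the same route as the paper: the corollary is obtained by plugging the just-proved topologically correct execution theorem for $\mathsf{ALBA}^{\Pi_2}$ into the U-shaped argument of Section~\ref{SubSec:U:Shaped:Argument}, with the topological Ackermann lemmas justifying the Ackermann steps over admissible valuations. Your write-up is in fact more explicit than the paper about how the topological Ackermann lemma handles the first-half (existential) Ackermann steps, but this is a faithful elaboration of the intended argument rather than a different approach.
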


\section{Examples}\label{Subsec:Examples:Pi_2}

\begin{example}[See Lemma 2.1.12 in \cite{Sa16}]
Given a modal subordination algebra $(B,\prec,\Diamond)$ and its dual Stone space with two relations $(X,\tau,R,R')$,
$$(B,\prec,\Diamond)\Vdash\forall a\forall b(a\prec b\Rightarrow\exists c(a\prec c\prec b))\mbox{ iff }R\mbox{ is transitive.}$$
\end{example}

\begin{proof}
It suffices to show that $$(B,\prec,\Diamond)\Vdash \forall a\forall b(a\prec b\Rightarrow\exists c(a\prec c\prec b))$$ iff its dual $$(X,\tau,R,R')\vDash\forall w\forall v\forall u(Rwv\land Rvu\to Rwu).$$

Indeed,
\begin{center}
\begin{tabular}{c l}
 &$(B,\prec,\Diamond)\Vdash \forall a\forall b(a\prec b\Rightarrow\exists c(a\prec c\prec b))$\\
iff &$(X,\tau,R,R')\Vdash_{Clop}\forall a\forall b(a\prec b\Rightarrow\exists c(a\prec c\prec b))$\\
iff &$(X,\tau,R,R')\Vdash_{Clop}\forall a\forall b({\diamdot}a\leq b\Rightarrow\exists c({\diamdot}a\leq c\ \&\ {\diamdot}c\leq b))$\\
iff &$(X,\tau,R,R')\Vdash_{Clop}\forall a\forall b({\diamdot}a\leq b\Rightarrow\ {\diamdot}{\diamdot}a\leq b)$\\
iff &$(X,\tau,R,R')\Vdash_{Clop}\forall a\forall b\forall\nomi\forall\nomj({\diamdot}a\leq b\ \&\ \nomi\leq{\diamdot}{\diamdot}a\ \&\ b\leq\neg\nomj\ \Rightarrow\ \nomi\leq\neg\nomj)$\\
iff &$(X,\tau,R,R')\Vdash_{Clop}\forall a\forall b\forall\nomi\forall\nomj\forall\nomk({\diamdot}a\leq b\ \&\ \nomi\leq{\diamdot}{\diamdot}\nomk\ \&\ \nomk\leq a\ \&\ b\leq\neg\nomj\ \Rightarrow\ \nomi\leq\neg\nomj)$\\
iff &$(X,\tau,R,R')\Vdash_{Clop}\forall b\forall\nomi\forall\nomj\forall\nomk({\diamdot}\nomk\leq b\ \&\ \nomi\leq{\diamdot}{\diamdot}\nomk\ \&\ b\leq\neg\nomj\ \Rightarrow\ \nomi\leq\neg\nomj)$\\
iff &$(X,\tau,R,R')\Vdash_{Clop}\forall\nomi\forall\nomj\forall\nomk({\diamdot}\nomk\leq\neg\nomj\ \&\ \nomi\leq{\diamdot}{\diamdot}\nomk\ \Rightarrow\ \nomi\leq\neg\nomj)$\\
iff &$(X,\tau,R,R')\Vdash_{Clop}\forall\nomk({\diamdot}{\diamdot}\nomk\leq{\diamdot}\nomk)$\\
iff &$(X,\tau,R,R')\vDash\forall w(R[R[w]]\subseteq R[w])$\\
iff &$(X,\tau,R,R')\vDash\forall w\forall v\forall u(Rwv\land Rvu\to Rwu)$.
\end{tabular}
\end{center}
\end{proof}

\section{Comparison with Existing Works}\label{Sec:Comparison}

In this section, we compare the present paper's method with existing works on the correspondence theory of subordination algebras and subordination spaces. In general, the three approaches we mention here treat subordination algebras and subordination spaces in different forms, while they do not treat modal subordination algebras and Stone spaces with two relations as we deal with.

\subsection{de Rudder et al.'s approach}

In \cite{dR20,dRHaSt20,dRPa21}, de Rudder et al.\ studied correspondence theory of subordination algebras in the perspective of quasi-modal operators and slanted algebras. The basic idea behind their work is as follows (we follow the notation in \cite{dRPa21}):

\begin{definition}[Definition 6.4 in \cite{dRPa21}]
A \emph{tense slanted BAE} is defined as $(B,\Diamond, \blacksquare)$, where $B$ is a Boolean algebra and $\Diamond:B\to\mathsf{K}(B^{\delta})$, $\blacksquare:B\to\mathsf{O}(B^{\delta})$ are such that 
\begin{itemize}
\item $\Diamond\bot=\bot$ and $\Diamond(a\lor b)=\Diamond a\lor \Diamond b$;
\item $\blacksquare\top=\top$ and $\blacksquare(a\land b)=\blacksquare a\land\blacksquare b$;
\item $\Diamond a\leq b$ iff $a\leq\blacksquare b$.
\end{itemize}
\end{definition}

For any subordination algebra $(B,\prec)$, define $\Diamond_{\prec}$ and $\blacksquare_{\prec}$ as follows:

$$\Diamond_{\prec}a:=\bigwedge\{b\in B\mid a\prec b\}$$
$$\blacksquare_{\prec}a:=\bigvee\{b\in B\mid b\prec a\}$$

then the associated tense slanted BAE is defined as $(B,\Diamond_{\prec}, \blacksquare_{\prec})$.

Given a tense slanted BAE $(B,\Diamond, \blacksquare)$, define $\prec_{\Diamond}$ as $$a\prec_{\Diamond}b\mbox{ iff }\Diamond a\leq b\mbox{ iff }a\leq\blacksquare b,$$

then the associated subordination algebra is $(B,\prec_{\Diamond})$.

Then it is easy to see that there is a 1-1 correspondence between subordination algebras and tense slanted BAEs (see Proposition 6.7 in \cite{dRPa21}).

The Sahlqvist formulas are then defined in the language of tense slanted BAEs by using the bimodal language $\Diamond$, $\blacksquare$ and there duals $\Box:=\neg\Diamond\neg$ and $\Diamondblack:=\neg\blacksquare\neg$:

\begin{definition}[Definition 3.3 in \cite{dRHaSt20}]
\begin{itemize}
\item A bimodal formula $\phi$ is \emph{closed} (resp.\ \emph{open}) if it is obtained from $\top,\bot$, propsitional variables and their negations by applying $\land,\lor,\Diamond,\Diamondblack$ (resp. $\land,\lor,\Box,\blacksquare$).
\item A bimodal formula $\phi$ is \emph{positive} (resp.\ \emph{negative}) if it is obtained from $\top,\bot$ and propositional variables (resp.\ negations of propositional variables) by applying $\land,\lor,\Diamond,\Diamondblack,\Box,\blacksquare$.
\item A bimodal formula $\phi$ is \emph{s-positive} (resp.\ \emph{s-negative}) if it is obtained from closed positive formulas (resp.\ open negative formulas) by applying $\land,\lor,\Box,\blacksquare$ (resp.\ $\land,\lor,\Diamond,\Diamondblack$).
\item A bimodal formula $\phi$ is \emph{g-closed} (resp.\ \emph{g-open}) if it is obtained from closed (resp.\ open) formulas by applying $\land,\lor,\Box,\blacksquare$ (resp.\ $\land,\lor,\Diamond,\Diamondblack$).
\item A \emph{strongly positive} bimodal formula $\phi$ is a conjunction of formulas of the form $\Box^{\mu_1}\blacksquare^{\mu_2}\ldots\Box^{\mu_k}p$ where $\mu_i$ is a natural number for each $i$.
\item An \emph{s-untied} bimodal formula $\phi$ is obtained from strongly positive and s-negative formulas by applying only $\land,\Diamond,\Diamondblack$.
\item An \emph{s-Sahlqvist} bimodal formula $\phi$ is of the form $\Box^{\mu_1}\blacksquare^{\mu_2}\ldots\Box^{\mu_k}(\phi_1\to\phi_2)$ where $\phi_1$ is s-untied and $\phi_2$ is s-positive.
\end{itemize}
\end{definition}

\begin{remark}
The difference and similarity between de Rudder et al.'s approach and our approach can be summarized as follows:
\begin{enumerate}
\item de Rudder et al. are using our ${\boxdot},{\diamdot},{\boxdotb},{\diamdotb}$ as their signature, and they allow nested occurrences of these modalities, while in our approach, ${\diamdot}$ only occurs in the form ${\diamdot}\phi\leq\psi$ which stands for $\phi\prec\psi$. 
\item de Rudder et al.'s modalities are slanted, i.e.\ the diamond (resp.\ open) modalities are mapping clopen elements to closed (resp.\ open) elements since they correspond to the subordination relation, while we have two kinds of modalities, namely modalities corresponding to the subordination relation and ordinary modalities which map clopens to clopens.
\item In de Rudder et al.'s approach, they only treat formulas, while our first-class citizens are quasi-inequalties and $\Pi_2$-statements.
\end{enumerate}
\end{remark}

\subsection{Santoli's approach}

In \cite{Sa16}, Santoli studied the topological correspondence theory between conditions on algebras and first-order conditions on the dual subordination spaces, in the language of a binary connective definable from the squigarrow associated with the subordination relation, using the so-called $\forall\exists$-statements \cite{BeBeSaVe19,BeCaGhLa22,BeGhLa20} (which we call $\Pi_2$-statements in the present paper).

In \cite{Sa16}, Santoli uses a binary modality $\diamond$ which is defined as $\phi\diamond\psi:=\neg(\phi\rightsquigarrow\psi)$ (see page \pageref{page:squigarrow}), and its dual $\square$ which is defined as $\phi\square\psi:=\neg(\neg\phi\diamond\neg\psi)$. His definition of Sahlqvist formulas and $\forall\exists$-statements are given as follows:

\begin{definition}[Definition 6.1.1 in \cite{Sa16}]
\begin{itemize}
\item A formula $\phi$ is \emph{positive $\diamond$-free} if it is obtained from $\top$ and propositional variables by applying $\lor, \land$.
\item A formula $\theta$ is a \emph{Sahlqvist antecedent} if it is obtained from $\top$ and $\phi\diamond\psi$ using $\lor,\land$ where $\phi$ and $\psi$ are positive $\diamond$-free.
\item A formula $\chi$ is \emph{positive} if it is obtained from $\top$ and $\phi\diamond\psi$ and $\phi\square\psi$ using $\lor,\land$ where $\phi$ and $\psi$ are positive $\diamond$-free.
\item A \emph{non-separating} formula $S(p)$ is $F(p)\lor G(\neg p)$ where there are positive $\diamond$-free formulas $\phi,\psi$ such that $F(p)$ is equal to either $\phi\diamond p$ or $p\diamond\phi$ and $G(\neg p)$ is euqal to either $\psi\diamond\neg p$ or $\neg p\diamond\psi$.
\item A \emph{general positive} formula is a formula $\chi(\overline{p})$ which is a conjunction of
non-separating formulas $S(p_1),\ldots,S(p_n)$ and positive formulas, where $\overline{p}=p_1,\ldots,p_n$ are propositional variables.
\item A \emph{Sahlqvist} formula is a formula $\theta\to\chi(\overline{p})$ where $\theta$ is a Sahlqivst antecedent, $\chi(\overline{p})$ is a general positive formula, and the propositional variables $\overline{p}=p_1,\ldots,p_n$ do not occur in $\theta$.
\end{itemize}
\end{definition}

\begin{definition}[Definition 6.1.1 in \cite{Sa16}]
A Sahlqvist statement is $\Psi$ in the signature of $(\land,\neg,\top,\diamond)$ of the form
$$\Psi:=\forall\overline{q}(\forall\overline{p}(\theta\land(\bigwedge^{k}_{l=1}S_l(p_l))=\top)\Rightarrow\forall\overline{r}(\chi(\overline{r})=\top))$$ where
\begin{itemize}
\item $\theta$ is a Sahlqvist antecedent;
\item $\chi(\overline{r})$ is a general positive formula;
\item the $S_l(p_l)$'s are non-separating formulas;
\item $\overline{q}$ are all propositional variabless not among $\overline{p}=p_1,\ldots,p_k$ and $\overline{r}$ which occur in the formula;
\item the proposition variables $\overline{p}$ and $\overline{r}$ do not occur anywhere but in their respective non-separating formulas.
\end{itemize}
\end{definition}

\begin{remark}
The difference and similarity between Santoli's approach and our approach can be summarized as follows:
\begin{enumerate}
\item Santoli treats both Sahlqvist formulas and Sahlqivst $\forall\exists$-statements, which is similar to our approach. However, our treatment of $\forall\exists$-statements (i.e.\ $\Pi_2$-statements) are based on quasi-inequalities, which is different from Santoli's approach. 
\item Santoli uses a diamond-type binary modality as signature, while we use both the subordination relation and the dotted modalities to interpret the subordination relation and its corresponding closed relation.
\item Santoli uses the non-separating formulas, which is a special feature in his approach and not used anywhere else.
\item Santoli does not allow nested occurrences of modalities, which is similar to our usage of dotted modalities, while we allow nested occurrences of ordinary modalities.
\end{enumerate}
\end{remark}

\subsection{Balbiani and Kikot's approach}

Balbiani and Kikot \cite{BaKi12} investigated the Sahlqvist theory in the language of region-based modal logics of space, which uses a contact relation. 

The syntax of \cite{BaKi12} is two-layered. The inner layer is the layer of Boolean terms, and is defined as follows:

$$a::=p\mid 0\mid -a \mid(a \cup b)$$

and $1$ is defined as $-0$, $(a \cap b)$ is defined as $-(-a \cup-b)$. A term $a$ is \emph{positive} if $a$ is obtained from variables and $1$ by applying $\cup,\cap$. The second layer is defined as follows:
$$\phi::=a\equiv b\mid C(a,b)\mid\bot\mid\neg\phi\mid(\phi\vee\psi)$$

and $\top$ is defined as $\neg\bot$, $(\phi\land\psi)$ is defined as $\neg(\neg\phi\lor\neg\psi),(\phi\to\psi)$ is defined as $(\neg\phi\lor\psi)$ and $(\phi\leftrightarrow\psi)$ for $(\neg(\phi \lor\psi)\lor\neg(\neg\phi\lor\neg\psi))$, $a\not\equiv b$ is defined as $\neg a\equiv b$, $\bar{C}(a,b)$ is defined as $\neg C(a,b)$.

The semantics of the language is given as follows:

\begin{definition}[Section 3 in \cite{BaKi12}]
A \emph{Kripke frame} is $\mathcal{F}=(W,R)$ where $W$ is non-empty and $R$ is a binary relation on $W$. A \emph{valuation} on $\mathcal{F}$ is a function $V$ assigning to each Boolean variable $p$ a subset $V(p)$ of $W$. The interpretation of Boolean terms are given as follows:
\begin{itemize}
\item $V(0)=\emptyset$;
\item $V(-a)=W-V(a)$;
\item $V(a\cup b)=V(a)\cup V(b)$.
\end{itemize}
A \emph{Kripke model} is $\mathcal{M}=(W,R,V)$ where $\mathcal{F}=(W,R)$ is a Kripke frame and $V$ is a valuation on $\mathcal{F}$. The satisfaction relation is defined as follows:
\begin{itemize}
\item $\mathcal{M}\Vdash a\equiv b$ iff $V(a)=V(b)$;
\item $\mathcal{M}\Vdash C(a,b)$ iff there exist $x,y\in W$ such that $xRy$, $x\in V(a)$ and $y\in  V(b)$;
\item $\mathcal{M}\nVdash\bot$;
\item $\mathcal{M}\Vdash\neg\phi$ iff $\mathcal{M}\nVdash \phi$;
\item $\mathcal{M}\Vdash\phi\vee\psi$ iff $\mathcal{M}\Vdash\phi$ or $\mathcal{M}\Vdash\psi$.
\end{itemize}

\end{definition}

The definition of Sahlqvist formulas is given as follows:

\begin{definition}[Section 2 in \cite{BaKi12}]
\begin{itemize}
\item A formula $\phi$ is \emph{negation-free} if it is obtained from $\top$, $a\not\equiv 0$ and $C(a,b)$ (where $a$ and $b$ are positive terms) by applying $\land,\lor$.
\item A formula $\phi$ is \emph{positive} if it is obtained from $\top$, $a\not\equiv 0$, $-a \equiv 0$, $C(a,b)$ and $\bar{C}(-a,-b)$ (where $a$ and $b$ are positive terms) by applying $\land,\lor$.
\item A formula $\phi$ is \emph{Sahlqvist} if it is of the form $\psi\to\chi$ where $\psi$ is negation-free and $\chi$ is positive.
\end{itemize}
\end{definition}

According to \cite{BaKi12}, precontact logics contains all instances of the following formulas:

\begin{itemize}
\item $C(a,b)\\to a\not\equiv 0\land b\not\equiv 0$;
\item $C(a\cup b,c)\leftrightarrow C(a,c)\lor C(b,c)$;
\item $C(a,b\cup c)\leftrightarrow C(a,b)\lor C(a,c)$.
\end{itemize}

which means that the relation $C$ on a Boolean algebra is a \emph{precontact relation} or \emph{proximity} (see Definition 2.1.2 in \cite{Sa16} or \cite{DuVa07}). Given a subordination $\prec$, the relation $a C_{\prec} b:=a\not\prec\neg b$ is a proximity. Given a proximity $C$, the relation $a\prec_{C}b:=a\bar{C}\neg b$ is a subordination. Indeed, there is a 1-1 correspondence between proximities and subordinations.

\begin{remark}
The similarity and difference between Balbiani and Kikot's approach and our approach can be summarized as follows:
\begin{itemize}
\item Balbiani and Kikot use a two-layered syntax, which is similar to our usage of formulas as one layer and inequalities of the form $\phi\leq\psi$, $\phi\prec\psi$, quasi-inequalities and $\Pi_2$-statements as another layer.
\item Balbiani and Kikot uses the precontact relation, while we use the subordination relation.
\item Balbiani and Kikot do not allow nested occurrences of $C$, which is similar to our approach where the dotted modalities are not nested, although we allow nested occurrences of ordinary modalities.
\item Balbiani and Kikot do not have $\Pi_2$-statements, which is what we use substantially.
\end{itemize}
\end{remark}

\paragraph{Acknowledgement} The research of the author is supported by Taishan University Starting Grant ``Studies on Algebraic Sahlqvist Theory'' and the Taishan Young Scholars Program of the Government of Shandong Province, China (No.tsqn201909151). The author would like to thank Nick Bezhanishvili for his suggestions and comments on this project.

\bibliographystyle{abbrv}
\bibliography{Modal_Subord_Algebra}
\end{document}